\newcommand{\setword}[2]{%
  \phantomsection
  #1\def\@currentlabel{\unexpanded{#1}}\label{#2}%
}
\newtheorem{thm}{Theorem}[section]
\newtheorem{cor}[thm]{Corollary}
\newtheorem{lem}[thm]{Lemma}
\newtheorem{prop}[thm]{Proposition}
\newtheorem{remark}[thm]{Remark}
\numberwithin{equation}{section}
\theoremstyle{definition}
\newtheorem{definition}[thm]{Definition}
\begin{document}

\allowdisplaybreaks % Allow equations to break across pages

 \title[Nontrivial solutions for nonlinear problems]{Nontrivial solutions for nonlinear problems driven by a superposition of fractional $p$-Laplacians with Neumann boundary conditions}

 \author[Yergen Aikyn]{Yergen Aikyn}

\address[Yergen Aikyn]{Department of Mathematics: Analysis, Logic and Discrete Mathematics, Ghent University, Ghent, Belgium}
\email{yergen.aikyn@ugent.be}

\date{}

\begin{abstract}
In this paper, we investigate existence results for nonlinear nonlocal problems governed by an operator obtained as a superposition of fractional $p$-Laplacians, subject to Neumann boundary conditions. A spectral analysis of the main operator leads us to apply different variational tools to establish our results. Specifically, we employ either the mountain pass method or the technique of linking over cones. Due to the generality of the setting, the resulting theory applies to a broad class of local–nonlocal models.
\end{abstract}

\keywords{Superposition Operators, Eigenvalue Problem, Mountain pass theorem, Linking over cones.}
\subjclass{35R11, 35A15, 35A01, 35P30, 35J92.}

\maketitle

\tableofcontents

\section{Introduction and main results}

This paper studies Neumann boundary value problems driven by a nonlocal operator defined as the combination of the classical $p$-Laplacian and a continuous superposition of fractional $p$-Laplacians.
More precisely, let $\mu$ be a nonnegative and nontrivial finite Borel measure over $(0,1)$, and let $\alpha \geq 0$ and $p\in(1,+\infty)$. The operator of interest is defined by
\begin{align}\label{main operator}
\mathfrak{L}_{\alpha, \mu, p}(u):=-\alpha \Delta_p u+\int_{(0,1)}(-\Delta)_{p}^{s} u d \mu(s).
\end{align}
Here, $\Delta_p$ is the classical $p$-Laplacian, given by $\Delta_p u:= \operatorname{div}\left(|\nabla u|^{p-2} \nabla u\right)$, and $(-\Delta)_p^s$ is the fractional $p$-Laplacian, defined for all $s \in(0,1)$ as
\begin{align}\label{p frac laplacian}
(-\Delta)_{p}^{s} u(x)&:=c_{N, s, p} \lim _{\varepsilon \searrow 0} \int_{\mathbb{R}^N \backslash B_{\varepsilon}(x)} \frac{|u(x)-u(y)|^{p-2}(u(x)-u(y))}{|x-y|^{N+s p}} d y.
\end{align}
For further details on the fractional $p$-Laplace operator, see, e.g., \cite{IMS-2016, BLS-2018} and the references therein.
The positive constant $c_{N, s, p}$ is the standard normalization (see \cite[Definition 1.1]{Warma-2016}) chosen so as to ensure consistent limits as $s \nearrow 1$ and $s \searrow 0$, namely
\begin{equation*}
    \lim _{s \nearrow 1}(-\Delta)_{p}^{s} u=(-\Delta)_p^{1} u=-\Delta_p u \quad \text { and } \quad \lim _{s \searrow 0}(-\Delta)_{p}^{s} u=(-\Delta)_{p}^{0} u=u.
\end{equation*}

A continuous superposition of linear fractional operators corresponding to the case $p=2$, i.e., the operator $\mathfrak{L}_{\alpha, \mu, 2}:=\mathfrak{L}_{\alpha, \mu}$, was recently introduced in \cite{DLSV-2025}. In this paper, we let $p \in(1,+\infty)$, which endows the operator $\mathfrak{L}_{\alpha, \mu, p}$ with a nonlinear structure that requires a different analytical approach from the linear case.

More generally, when $\mu$ is a signed finite Borel measure on $[0,1]$, continuous superpositions of operators of different fractional orders have recently been investigated in \cite{AGKR, AGKR1, DPSV25p, DPSV25, DPSV25j}, in the context of Dirichlet boundary problems with critical nonlinearity.

To properly define the Neumann boundary conditions for the operator $\mathfrak{L}_{\alpha, \mu, p}$, one has to take into account both the local and nonlocal contributions in the definition of $\mathfrak{L}_{\alpha, \mu, p}$. In the spirit of \cite{DLSV-2025}, we formulate the Neumann boundary conditions as follows:
\begin{definition}\label{boundary cond}
    We say that $u$ satisfies the ($\alpha, \mu, p$)-Neumann conditions if
\begin{equation}\label{boundary cond 0}
    \begin{cases}\int_{(0,1)} \mathscr{N}_{s,p} u(x) d \mu(s)=g \text { for all } x \in \mathbb{R}^{N} \backslash \bar{\Omega}, & \text { if } \alpha=0,  \\ \left|\nabla u\right|^{p-2}\partial_{\nu} u(x)=h \text { for all } x \in \partial \Omega, & \text { if } \mu \equiv 0, \\ \begin{cases}\left|\nabla u\right|^{p-2}\partial_{\nu} u(x)=h \text { for all } x \in \partial \Omega \text { and } & \\ \int_{(0,1)} \mathscr{N}_{s,p} u(x) d \mu(s)=g \text { for all } x \in \mathbb{R}^{N} \backslash \bar{\Omega}, \end{cases} & \text { if } \alpha \neq 0 \text { and } \mu \not \equiv 0 .\end{cases}
\end{equation}
Moreover, when $g \equiv 0$ in $\mathbb{R}^{N} \backslash \bar{\Omega}$ and $h \equiv 0$ on $\partial \Omega$, we say that \eqref{boundary cond  0} are homogeneous ($\alpha, \mu, p$)-Neumann conditions.
\end{definition} 

The term $\mathscr{N}_{s, p}$ appearing in Definition \ref{boundary cond} is the nonlocal normal $p$-derivative defined by
\begin{equation}\label{Neumann def}
    \mathscr{N}_{s, p} u(x):=c_{N, s, p} \int_{\Omega} \frac{|u(x)-u(y)|^{p-2}(u(x)-u(y))}{|x-y|^{N+p s}} d y, \quad x \in \mathbb{R}^N \backslash \bar{\Omega}.
\end{equation}
This nonlocal version of the normal derivative was originally proposed for $p=2$ in \cite{DRV-2017} and was subsequently extended in \cite{BMPS-2020}. 

For $p=2$, Definition \ref{boundary cond} recovers the Neumann conditions introduced in \cite{DLSV-2025}. Furthermore, when $\mu$ is a Dirac measure concentrated at some $s\in(0,1)$, our formulation is consistent with the nonlocal Neumann conditions given in \cite{ML-2025}.

When $\alpha = 0$, the superposition nature of the operator and the nontriviality of the measure $\mu$ imply the existence of a specific exponent $\widetilde{s}_{\sharp} \in(0,1)$ such that
\begin{equation}\label{exp s sharp}
    \mu([\widetilde{s}_{\sharp}, 1))>0 .
\end{equation}
Moreover, we set
\begin{equation}\label{exp s sharp gen}
    s_{\sharp}:= \begin{cases}\widetilde{s}_{\sharp} & \text { if } \alpha=0,\\ 1 & \text { if } \alpha \neq 0 .\end{cases}
\end{equation}
As established in Corollary \ref{embed full} below, the exponent 
$$p_{s_{\sharp}}^*:=\frac{pN}{N-ps_{\sharp}}$$ 
plays the role of a critical exponent.

Within this framework, we study the existence of solutions to the following problem:
\begin{equation}\label{main problem 1}
\left\{\begin{array}{l}
\mathfrak{L}_{\alpha, \mu, p}(u)+|u|^{p-2}u=\lambda |u|^{p-2}u+f(x, u) \quad \text { in } \Omega,  \\
\text { with homogeneous }(\alpha, \mu, p) \text {-Neumann conditions. }
\end{array}\right.
\end{equation}
Here $\Omega$ is a bounded domain in $\mathbb{R}^N$ with a Lipschitz boundary and the function $f: \Omega \times \mathbb{R} \rightarrow \mathbb{R}$ is a Carathéodory function, that is the map $x \mapsto f(x, t)$ is measurable for every $t \in \mathbb{R}$ and the map $t \mapsto f(x, t)$ is continuous for a.e. $x \in \Omega$. Denoting $F(x, t):=\int_{0}^{t} f(x, \tau) d \tau$, we assume the following hypotheses on $f$:

\setword{($f_1$)}{Word:f1} there exist constants $a_{1}, a_{2}>0$ and $q\in(p,p_{ s_\sharp}^*)$ such that for every $t \in \mathbb{R}$ and for a.e. $x \in \Omega,$
\begin{equation*}
    |f(x, t)| \leq a_{1}+a_{2}|t|^{q-1};
\end{equation*} 

\setword{($f_2$)}{Word:f2} $f(x, t)=o\left(|t|^{p-1}\right)$ as $t \rightarrow 0$ uniformly a.e. in $\Omega$;

\setword{($f_3$)}{Word:f3} there exist $\gamma>p$ and $R \geq 0$ such that for every $t$ with $|t|>R$ and for a.e. $x \in \Omega,$
\begin{equation*}
    0<\gamma F(x, t) \leq f(x, t) t;
\end{equation*}

\setword{($f_4$)}{Word:f4} there exist $\widetilde{\gamma}>p,$ $a_{3}>0$ and $a_{4} \in L^{1}(\Omega)$ such that for every $t \in \mathbb{R}$ and a.e. $x \in \Omega$,
\begin{equation*}
F(x, t) \geq a_{3}|t|^{\widetilde{\gamma}}-a_{4}(x); 
\end{equation*}

\setword{($f_5$)}{Word:f5} if $R>0$ (for $R$ is as in \ref{Word:f3}), then $F(x, t) \geq 0$ for every $t \in \mathbb{R}$ and a.e. $x \in \Omega$.

Condition \ref{Word:f4} was introduced in \cite{Mugnai-0412} to complete the Ambrosetti-Rabinowitz condition for Carathéodory functions. Moreover, by \ref{Word:f1} and \ref{Word:f2} it follows that for any $\varepsilon>0$ there exists $\delta=\delta(\varepsilon)$ such that
\begin{equation}\label{estim for f ex}
        |F(x, t)| \leq \frac{\varepsilon}{p}|t|^{p}+\delta(\varepsilon)|t|^{q} \quad \text{for a.e. } x \in \Omega, \; t \in \mathbb{R}.
    \end{equation}

To carry out our existence analysis, we employ the spectral theory associated with the operator $\mathfrak{L}_{\alpha, \mu, p}+I$, where $I(u):=|u|^{p-2}u$. Namely, we consider the following nonlinear eigenvalue problem:
\begin{equation}\label{Eig problem 0}
\left\{\begin{array}{l}
\mathfrak{L}_{\alpha, \mu, p}(u)+|u|^{p-2}u=\lambda |u|^{p-2}u \quad \text { in } \Omega,  \\
\text { with homogeneous }(\alpha, \mu, p) \text {-Neumann conditions,}
\end{array}\right.
\end{equation}
which consists of finding the values of $\lambda$ that allow for nontrivial solutions. Applying the cohomological index theory of Fadell and Rabinowitz yields a sequence $\{\lambda_k\}$ of eigenvalues satisfying
\begin{equation*}
    1=\lambda_1\leq \lambda_2 \leq \lambda_3 ...
\end{equation*}
Namely, we have the following result.
\begin{thm}\label{Main result 1}
    For all $k \in \mathbb{N},$ the quantity $\lambda_{k}$, defined by 
   \begin{equation*}
\lambda_{k}=\inf _{A \in \mathcal{F}_{k}} \sup _{u \in A} \mathcal{P}(u),
\end{equation*}
    is an eigenvalue of \eqref{Eig problem 0}. Moreover, $\lambda_{k} \rightarrow \infty$, as $k\to \infty$.
\end{thm}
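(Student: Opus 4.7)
The plan is to apply the Fadell--Rabinowitz cohomological $\mathbb{Z}_2$-index minimax principle on a suitable constraint manifold. To this end, I would work with the reflexive Banach space $W$ naturally associated with the homogeneous $(\alpha,\mu,p)$-Neumann energy, equipped with the seminorm
\begin{equation*}
[u]_W^p := \alpha\int_\Omega |\nabla u|^p\,dx + \int_{(0,1)}[u]_{s,p}^p\,d\mu(s) + \int_\Omega |u|^p\,dx,
\end{equation*}
where $[u]_{s,p}$ denotes the Gagliardo-type seminorm associated with the fractional $p$-Laplacian under the nonlocal normal derivative \eqref{Neumann def}. The $0$-homogeneous Rayleigh quotient $\mathcal{P}(u):=[u]_W^p/\|u\|_{L^p(\Omega)}^p$ is then of class $C^1$ on $W\setminus\{0\}$, and its critical values---equivalently, the critical values of its restriction to the symmetric $C^1$-manifold $M:=\{u\in W:\|u\|_{L^p(\Omega)}=1\}$---coincide, via Lagrange multipliers, with the eigenvalues of \eqref{Eig problem 0}.

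Setting $\mathcal{F}_k:=\{A\subset M : A\text{ closed, symmetric, with }i(A)\geq k\}$, where $i$ is the Fadell--Rabinowitz cohomological index, the standard deformation-based minimax principle then guarantees that each
\begin{equation*}
\lambda_k := \inf_{A\in\mathcal{F}_k}\sup_{u\in A}\mathcal{P}(u)
\end{equation*}
is a critical value of $\mathcal{P}|_M$---hence an eigenvalue of \eqref{Eig problem 0}---provided $\mathcal{P}|_M$ satisfies the Palais--Smale condition. To check $(PS)$, I would take $(u_n)\subset M$ with $\mathcal{P}(u_n)$ bounded and the tangential gradient converging to $0$. The boundedness of $[u_n]_W$ yields $u_n\rightharpoonup u$ in $W$ along a subsequence; the compact embedding $W\hookrightarrow L^p(\Omega)$ from Corollary \ref{embed full} gives $u_n\to u$ in $L^p(\Omega)$, so $u\in M$. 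An $(S_+)$-type argument, combining the strict pointwise monotonicity inequalities for the $p$-Laplacian kernels with integration against the finite measure $\mu$, then promotes the weak convergence in $W$ to strong convergence.

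To obtain $\lambda_k\to\infty$, I would argue by contradiction: if $\lambda_k\leq C$ for every $k$, then the $(PS)$ condition, together with the continuity, monotonicity, and subadditivity of the Fadell--Rabinowitz index applied to a symmetric neighborhood of the (compact) critical set in $[0,C+1]$, implies that the sublevel set $\{u\in M:\mathcal{P}(u)\leq C+1\}$ has finite index. However, $\lambda_k\leq C$ forces the existence of sets in $\mathcal{F}_k$ contained in this sublevel set for every $k$, contradicting the finiteness of its index for $k$ sufficiently large.

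The principal obstacle is the verification of the Palais--Smale condition, specifically upgrading weak to strong convergence in $W$ for a continuous superposition of fractional $p$-Laplacians of all orders $s\in(0,1)$. The key technical step is to establish an $(S_+)$-type inequality uniformly in $s$ and to justify passing to the limit in the nonlocal double integrals by applying Fatou's lemma both in the variables $(x,y)$ and in the measure $\mu$; this is considerably more delicate in the nonlinear setting $p\neq 2$ than in the linear case treated in \cite{DLSV-2025}, since orthogonality arguments are no longer available and one must rely on pointwise convexity estimates of Simon type.
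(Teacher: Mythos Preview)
Your proposal is correct and follows essentially the same route as the paper: restrict the energy $\mathcal{P}$ to the $C^1$-manifold $\mathcal{S}=\{\|u\|_{L^p(\Omega)}=1\}$, verify the Palais--Smale condition by combining the compact embedding of Corollary~\ref{embed full} with Simon-type $(S_+)$ inequalities, and then invoke the deformation-based minimax principle for the Fadell--Rabinowitz index. Your concern in the last paragraph about needing Fatou's lemma and uniformity in $s$ is overstated: once you have $\langle\mathcal{P}'(u_n)-\mathcal{P}'(u),u_n-u\rangle\to 0$, the Simon inequalities are applied pointwise in $(x,y,s)$ and then simply integrated against $dx\,dy\,d\mu(s)$, yielding a direct bound on $\|u_n-u\|_{\alpha,\mu,p}$ (with the usual case split $p\geq 2$ versus $1<p<2$) without any limit passage inside the integral.
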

We note that Theorem \ref{Main result 1} is new in this setting. The precise definitions of the sets $A$, $\mathcal{F}_k$, and the functional $\mathcal{P}$ appearing in Theorem \ref{Main result 1} will be given in Section 3.

We prove the existence of nontrivial solutions to problem \eqref{main problem 1}, depending on the position of the parameter $\lambda$ with respect to the first eigenvalue of the eigenvalue problem \eqref{Eig problem 0}. The main results are stated as follows.
\begin{thm}\label{main result 02}
   Let $f: \Omega \times \mathbb{R} \rightarrow \mathbb{R}$ satisfy \ref{Word:f1}-\ref{Word:f5}.
Then, for any $\lambda \geq \lambda_{1}$, problem \eqref{main problem 1} admits a nontrivial weak solution of linking type.
\end{thm}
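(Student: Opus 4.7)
The plan is to apply the abstract linking-over-cones critical point theorem of Degiovanni and Lancellotti \cite{DL-2007} to the energy functional associated with \eqref{main problem 1}, namely
\[
\mathcal{J}_\lambda(u):=\frac{1}{p}\mathcal{E}(u)+\frac{1-\lambda}{p}\int_\Omega |u|^p\,dx-\int_\Omega F(x,u)\,dx,
\]
where $\mathcal{E}(u)$ denotes the natural $p$-energy form induced by $\mathfrak{L}_{\alpha,\mu,p}$ on the Banach space $W$ of admissible functions built in Section~2. Under \ref{Word:f1}, $\mathcal{J}_\lambda$ is of class $C^1$ on $W$, and its critical points are precisely the weak solutions of \eqref{main problem 1}.

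First, using Theorem~\ref{Main result 1}, I would fix $k\in\mathbb{N}$ with $\lambda_k\leq\lambda<\lambda_{k+1}$ (such $k$ exists because $\lambda_k\to\infty$). Starting from the min--max characterization $\lambda_k=\inf_{A\in\mathcal{F}_k}\sup_{u\in A}\mathcal{P}(u)$ and the properties of the Fadell--Rabinowitz cohomological index, I would construct two closed symmetric cones $C_-,C_+\subset W$ out of sublevel and superlevel sets of $\mathcal{P}$ whose cohomological indices differ by one and whose unit spheres link in the Degiovanni--Lancellotti sense. This is the step that genuinely exploits the spectral information and is what makes the argument cover the resonant situation $\lambda=\lambda_k$.

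Next, I would verify the linking geometry of $\mathcal{J}_\lambda$. On $\{u\in C_+:\|u\|_W=\rho\}$, the bound $\mathcal{P}(u)\geq\lambda_{k+1}>\lambda$ together with \eqref{estim for f ex} and the continuous embedding $W\hookrightarrow L^q(\Omega)$ from Corollary~\ref{embed full} gives $\mathcal{J}_\lambda\geq\eta>0$, provided $\varepsilon$ is small enough to absorb the $L^p$--contribution into $\mathcal{E}-\lambda\|\cdot\|_{L^p}^p$ and $\rho$ is small enough to control the $L^q$--contribution. On a neighbourhood of $C_-$ enlarged by a radial direction along a spectral component beyond $\lambda_{k+1}$, the Mugnai-type condition \ref{Word:f4} with $\widetilde\gamma>p$ forces $\mathcal{J}_\lambda\to-\infty$ as $\|u\|_W\to\infty$, yielding $\mathcal{J}_\lambda\leq0$ on the outer boundary of a sufficiently large set; here \ref{Word:f5} is used to prevent the values of $F$ on $\{|t|\leq R\}$ from destroying this coercivity from below.

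Finally, I would check that $\mathcal{J}_\lambda$ satisfies the Palais--Smale condition at every positive level: boundedness of PS sequences follows from the Ambrosetti--Rabinowitz condition \ref{Word:f3} complemented by \ref{Word:f4} in the manner of \cite{Mugnai-0412}, and strong convergence up to a subsequence follows from the $(S_+)$ property of $\mathfrak{L}_{\alpha,\mu,p}+I$ (a consequence of the uniform convexity of $W$) together with the compactness of $W\hookrightarrow L^q(\Omega)$ from Corollary~\ref{embed full} (note $q<p_{s_\sharp}^*$ by \ref{Word:f1}). The Degiovanni--Lancellotti theorem then produces a critical point $u$ of $\mathcal{J}_\lambda$ at a level $\geq\eta>0$; since $\mathcal{J}_\lambda(0)=0$, this $u$ is nontrivial and is the desired weak solution of linking type. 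The main obstacle I foresee is the cone construction and the verification of the linking property at the resonant endpoint $\lambda=\lambda_k$, where the simpler mountain-pass splitting breaks down; a secondary difficulty is propagating the $(S_+)$ property and Brezis--Lieb-type decompositions uniformly through the continuous superposition in $s$ against $\mu$, but nonnegativity of $\mu$ together with Fatou/dominated convergence should reduce this to the single-operator case.
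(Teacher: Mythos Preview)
Your approach is essentially the paper's: fix $k$ with $\lambda_k\le\lambda<\lambda_{k+1}$, take the cones $\mathcal{C}_k^-=\{\mathcal{P}(u)\le\lambda_k\|u\|_{L^p}^p\}$ and $\mathcal{C}_k^+=\{\mathcal{P}(u)\ge\lambda_{k+1}\|u\|_{L^p}^p\}$, verify the Degiovanni--Lancellotti geometry, check $(PS)_c$ via \ref{Word:f3}--\ref{Word:f4}, and invoke the abstract linking theorem. The index identity you need is $i(\mathcal{C}_k^-\setminus\{0\})=i(W\setminus\mathcal{C}_k^+)=k$ (not that the indices differ by one), which is exactly the paper's Theorem~\ref{link index 2}; Corollary~\ref{link index} then gives the cohomological link $(Q,D_-\cup H)\to S_+$ in dimension $k+1$.

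One point needs correction. You attribute \ref{Word:f5} to the control of $F$ on $\{|t|\le R\}$ in the $e$-direction, but that step is already covered entirely by \ref{Word:f4}, which holds for all $t$. The actual role of \ref{Word:f5} is on the \emph{base} $D_-\subset\mathcal{C}_k^-$ of the linking set, a part of the boundary $D_-\cup H$ you do not explicitly treat: there one needs $\mathcal{J}_\lambda(u)\le 0$ for \emph{every} $u\in\mathcal{C}_k^-$ (small as well as large), and this comes from $F\ge 0$ together with $\|u\|_{\alpha,\mu,p}^p\le\lambda_k\|u\|_{L^p}^p\le\lambda\|u\|_{L^p}^p$. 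Without \ref{Word:f5} this inequality can fail on bounded pieces of $\mathcal{C}_k^-$, and the link inequality $\sup_{D_-\cup H}\mathcal{J}_\lambda<\inf_{S_+}\mathcal{J}_\lambda$ would not follow. Once you insert this step, your outline matches the paper's proof.
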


\begin{thm}\label{main result 03}
Let $f: \Omega \times \mathbb{R} \rightarrow \mathbb{R}$ satisfy \ref{Word:f1}-\ref{Word:f4}.
Then, for any $\lambda < \lambda_{1}$, problem \eqref{main problem 1} admits a nontrivial weak solution of mountain pass type.
\end{thm}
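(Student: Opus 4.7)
The plan is to apply the mountain pass theorem of Ambrosetti--Rabinowitz \cite{AR-1973} to the energy functional associated with \eqref{main problem 1}. Let $X$ denote the natural Banach space on which the operator $\mathfrak{L}_{\alpha,\mu,p}+I$ is variationally defined, equipped with the norm
\[
\|u\|_X^p = \alpha\int_\Omega|\nabla u|^p\,dx + \int_{(0,1)}[u]_{s,p}^p\,d\mu(s) + \int_\Omega|u|^p\,dx,
\]
where $[u]_{s,p}$ is the appropriate Gagliardo-type seminorm for the homogeneous $(\alpha,\mu,p)$-Neumann setting. Writing $Q(u):=\|u\|_X^p$, I would work with
\[
\mathcal{J}_\lambda(u) := \frac{1}{p}Q(u) - \frac{\lambda}{p}\|u\|_p^p - \int_\Omega F(x,u)\,dx,
\]
whose critical points are the weak solutions of \eqref{main problem 1}. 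It remains to verify the mountain-pass geometry and the Palais--Smale condition for $\mathcal{J}_\lambda$.

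\textbf{Mountain pass geometry.} Clearly $\mathcal{J}_\lambda(0)=0$. The variational characterization in Theorem \ref{Main result 1}, together with $\lambda_1=1$, gives $\|u\|_p^p\le Q(u)$. Since $\lambda<1$, picking $\theta=(\lambda+1)/2\in(\lambda,1)$ and splitting
\[
Q(u)-\lambda\|u\|_p^p = (1-\theta)Q(u) + \theta\bigl[Q(u)-(\lambda/\theta)\|u\|_p^p\bigr]
\]
produces the coercive estimate $Q(u)-\lambda\|u\|_p^p \ge \tfrac{1-\lambda}{2}Q(u)$, since $\lambda/\theta<\lambda_1$. Combining this with the growth bound \eqref{estim for f ex} and the subcritical embedding $X\hookrightarrow L^q(\Omega)$ furnished by Corollary \ref{embed full} (using $q<p^*_{s_\sharp}$), I obtain $\mathcal{J}_\lambda(u)\ge\alpha_0>0$ whenever $\|u\|_X=\rho$ is sufficiently small. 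For the far endpoint, fix any nonzero $\varphi\in X$; assumption \ref{Word:f4} yields
\[
\mathcal{J}_\lambda(t\varphi) \le \frac{t^p}{p}\bigl[Q(\varphi)-\lambda\|\varphi\|_p^p\bigr] - a_3 t^{\widetilde\gamma}\|\varphi\|_{\widetilde\gamma}^{\widetilde\gamma} + \|a_4\|_{L^1(\Omega)},
\]
which tends to $-\infty$ as $t\to\infty$ since $\widetilde\gamma>p$, so $e=t_0\varphi$ with $t_0$ large does the job.

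\textbf{Palais--Smale.} Let $\{u_n\}\subset X$ with $\mathcal{J}_\lambda(u_n)\to c$ and $\mathcal{J}_\lambda'(u_n)\to 0$ in $X^*$. Boundedness of $\{u_n\}$ in $X$ will follow by computing
\[
\mathcal{J}_\lambda(u_n)-\frac{1}{\gamma}\mathcal{J}_\lambda'(u_n)u_n = \Bigl(\frac{1}{p}-\frac{1}{\gamma}\Bigr)\bigl[Q(u_n)-\lambda\|u_n\|_p^p\bigr] + \int_\Omega\Bigl[\frac{1}{\gamma}f(x,u_n)u_n - F(x,u_n)\Bigr]dx,
\]
where the first term dominates $\kappa\|u_n\|_X^p$ for some $\kappa>0$ (by the convex-combination estimate and $\gamma>p$), while the integrand is nonnegative on $\{|u_n|>R\}$ by \ref{Word:f3} and uniformly bounded on $\{|u_n|\le R\}$ by \ref{Word:f1}. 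Passing to a subsequence, $u_n\rightharpoonup u$ in $X$ and $u_n\to u$ in $L^q(\Omega)$ and a.e. by compactness in Corollary \ref{embed full}. Subcritical growth then yields
\[
\int_\Omega f(x,u_n)(u_n-u)\,dx \to 0, \qquad \int_\Omega|u_n|^{p-2}u_n(u_n-u)\,dx \to 0,
\]
whence $\langle(\mathfrak{L}_{\alpha,\mu,p}+I)u_n,u_n-u\rangle\to 0$. Strong convergence $u_n\to u$ in $X$ should then follow from an $(S_+)$-type property for $\mathfrak{L}_{\alpha,\mu,p}+I$, obtained by applying the classical pointwise monotonicity inequality for $\xi\mapsto|\xi|^{p-2}\xi$ to the local gradient part and to the fractional difference quotient, and integrating against the product measure $dx\,dy\,d\mu(s)$.

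\textbf{Main obstacle and conclusion.} The delicate step is the $(S_+)$-type argument: it must accommodate the arbitrary finite measure $\mu$ on $(0,1)$, the different quantitative monotonicity inequalities available for $1<p<2$ versus $p\ge2$, and a dominated-convergence argument that is uniform in $s\in(0,1)$ when passing to the limit inside $\int_{(0,1)}d\mu(s)$. Once boundedness and $(S_+)$ are in hand, the mountain pass theorem of Ambrosetti--Rabinowitz produces a critical value $c\ge\alpha_0>0$, and the corresponding critical point is the desired nontrivial weak solution of mountain-pass type for \eqref{main problem 1}, completing the proof of Theorem \ref{main result 03}.
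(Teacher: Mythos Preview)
Your proposal is correct and follows essentially the same route as the paper: verify the Palais--Smale condition via the standard $\gamma\mathcal{J}-\langle\mathcal{J}',u\rangle$ computation together with an $(S_+)$-type property for $\mathfrak{L}_{\alpha,\mu,p}+I$ (the paper obtains this from Simon's inequalities, handled separately for $p\ge 2$ and $1<p<2$, in the proof of its Proposition~\ref{PS condition}), check the mountain-pass geometry using \eqref{estim for f ex} and \ref{Word:f4}, and then apply the Ambrosetti--Rabinowitz theorem. One minor slip: your choice $\theta=(\lambda+1)/2$ is only positive when $\lambda>-1$, so the convex-combination trick breaks down for $\lambda\le -1$; in that range simply note $Q(u)-\lambda\|u\|_p^p\ge Q(u)$ directly (the paper packages both cases as $Q(u)-\lambda\|u\|_p^p\ge \min\{1,1-\lambda\}\,Q(u)$).
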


We conclude with some remarks concerning Theorems \ref{main result 02} and \ref{main result 03}. In particular, our existence results extend those established in \cite[Theorems 1.2 and 1.3]{DLSV-2025 Neumann}.

When $\lambda<\lambda_1$, the energy functional associated with problem \eqref{main problem 1} possesses a mountain-pass geometry, yielding a nontrivial solution via the mountain-pass lemma \cite{AR-1973}.

 In the case $\lambda \geq \lambda_{1}$ and $p=2$, the existence result relies on a classical linking argument, which requires a decomposition of the functional space into a direct sum of closed subspaces. In \cite{DLSV-2025 Neumann}, the authors restricted the analysis to a suitable subspace and obtained a complete orthogonal system of eigenfunctions, which then provided the desired direct-sum decomposition. 

However, if $p \neq 2$, the situation changes drastically, since the operator $\mathfrak{L}_{\alpha, \mu, p}$ is no longer linear and the structure of the eigenvalue set of $\mathfrak{L}_{\alpha, \mu, p}$ is not well understood. In this case, we employ the method of linking over cones introduced in \cite{DL-2007}, which provides a suitable alternative framework; see, for instance, 
\cite{ML-2021l, LZ-2011, LMZ-2025, PYZ-2018}. All the details of this theory will be recalled in Section 2.

Because of the generality of the measure $\mu$, problem \eqref{main problem 1} covers many specific cases. We present several examples below:

\begin{itemize}
    \item if $\alpha\neq 0$, $\beta>0$ and $\mu:=\beta \delta_{s}$, being $\delta_{s}$ the Dirac's delta at $s \in(0,1)$, the operator $\mathfrak{L}_{\alpha, \mu, p}$ introduced in \eqref{main operator} boils down to the mixed operator
\begin{equation*}
    \mathfrak{L}_{\alpha, \mu, p}(u)=-\alpha \Delta_p+\beta(-\Delta)_p^{s}
\end{equation*}
for some $s \in(0,1)$, and the Neumann conditions in \eqref{boundary cond  0} reduce to
\begin{equation*}
    \begin{cases}|\nabla u|^{p-2}\partial_{\nu} u(x)=h(x) & \text { for all } x \in \partial \Omega,  \\ \mathscr{N}_{s, p} u(x)=g(x) & \text { for all } x \in \mathbb{R}^{N} \backslash \bar{\Omega}.\end{cases}
\end{equation*}
\end{itemize}
This particular case is considered, for example, in \cite{ML-2025}.
\begin{itemize}
    \item if $\alpha\neq 0$ and the measure given by
$\mu:=\sum_{k=1}^{n} \delta_{s_{k}}$
for some $n \in \mathbb{N}$ with $n \geq 2$, then we have 
\begin{equation*}
    \mathfrak{L}_{\alpha, \mu, p}(u)=-\alpha \Delta_p u+\sum_{k=1}^{n}(-\Delta)_p^{s_{k}} u
\end{equation*}
for $s_{1}, \ldots, s_{n} \in(0,1)$, with $n \geq 2$, with Neumann boundary conditions
\begin{equation*}
    \begin{cases}|\nabla u|^{p-2}\partial_{\nu} u(x)=h(x) & \text { for all } x \in \partial \Omega, \\ \sum_{k=1}^{n} \mathscr{N}_{s_{k}, p} u(x)=g(x) & \text { for all } x \in \mathbb{R}^{N} \backslash \bar{\Omega}.\end{cases}
\end{equation*}
\end{itemize}
    
\begin{itemize}
    \item if $\alpha \neq 0$ and
$\mu:=\sum_{k=1}^{+\infty} c_{k} \delta_{s_{k}}$ (provided that the series converges) with $c_{k} \geq 0$ for any $k \in \mathbb{N} \backslash\{0\}$, then the operator in \eqref{main operator} takes the form
\begin{equation*}
  \mathfrak{L}_{\alpha, \mu, p}(u)=-\alpha \Delta_p u+\sum_{k=1}^{+\infty} c_{k}(-\Delta)_p^{s_{k}} u. 
\end{equation*}
In this case, the Neumann conditions reduce to
\begin{equation*}
    \begin{cases}|\nabla u|^{p-2}\partial_{\nu} u(x)=h(x) & \text { for all } x \in \partial \Omega, \\ \sum_{k=1}^{+\infty} c_{k} \mathscr{N}_{s_{k},p} u(x)=g(x) & \text { for all } x \in \mathbb{R}^{N} \backslash \bar{\Omega}.\end{cases}
\end{equation*}
\end{itemize}

\begin{itemize}
    \item if $\alpha\neq 0$ and the measure $\mu$ given by
\begin{equation*}
d \mu(s):=f(s) d s,
\end{equation*}
where $f$ is a measurable, nonnegative, and not identically zero function and $d s$ is a Lebesgue measure, then the operator in \eqref{main operator} boils down to
\begin{equation*}
    \mathfrak{L}_{\alpha, \mu, p}(u)=-\alpha \Delta_p u+\int_{0}^{1} f(s)(-\Delta)_p^{s} u d s
\end{equation*}
with the Neumann conditions given by
\begin{equation*}
    \begin{cases}|\nabla u|^{p-2}\partial_{\nu} u(x)=h(x) & \text { for all } x \in \partial \Omega, \\ \int_{0}^{1} f(s) \mathscr{N}_{s,p} u(x) d s=g(x) & \text { for all } x \in \mathbb{R}^{N} \backslash \bar{\Omega}.\end{cases}
\end{equation*}
\end{itemize}

The paper is organized as follows. In Section 2, we introduce the functional setting and state several preliminary results, including embedding theorems, integration by parts formulas, and some notions of linking sets and cohomological index theory. In Section 3, we prove Theorem \ref{Main result 1}, and in Section 4, we prove the main existence results stated in Theorems \ref{main result 02} and \ref{main result 03}.

We use standard notation. For $1 \leq p<+\infty,$ $\|u\|_{L^p(\Omega)}=\left(\int_{\Omega}|u|^p d x\right)^{\frac{1}{p}}$ denotes the usual $L^p$-norm. The $N$-dimensional Lebesgue measure of a set $E \subset \mathbb{R}^N$ is denoted by $|E|$. We use "$\rightarrow$" and "$\rightharpoonup$" to denote the strong and weak convergence in the relevant function space, respectively.

\section{Preliminaries}

\subsection{Functional space setup and their embeddings}

In this subsection, following the approach in \cite{DLSV-2025}, we introduce the functional setting relevant to our study.

For any $p \in(1, +\infty)$, we denote the Gagliardo seminorm of a function $u:\mathbb{R}^{N}\to \mathbb{R}$ by
$$
[u]_{s,p}:=\left(c_{N, s, p} \iint_{\mathcal{Q}} \frac{|u(x)-u(y)|^{p}}{|x-y|^{N+p s}} d x d y\right)^{\frac{1}{p}},
$$
where $\mathcal{Q}:=\mathbb{R}^{2 N} \backslash\left(\mathbb{R}^{N} \backslash \Omega\right)^{2}$. 

The space, which plays a central role in our work, is defined as
\begin{equation}\label{whole space}
    \mathcal{W}_{\alpha, \mu, p}(\Omega):= \begin{cases}W^{1,p}(\Omega) & \text { if } \mu \equiv 0,  \\ \mathcal{W}_{\mu,p}(\Omega) & \text { if } \alpha=0, \\ W^{1,p}(\Omega) \cap \mathcal{W}_{\mu,p}(\Omega) & \text { if } \alpha \neq 0 \text { and } \mu \not \equiv 0,\end{cases}
\end{equation}
with the norm
\begin{align}\label{whole norm}
\begin{split}
    &\|u\|_{\alpha, \mu, p}:=\Bigg(\|u\|_{L^{p}(\Omega)}^{p}+\left\||h|^{\frac{1}{p}} u\right\|_{L^{p}(\partial \Omega)}^{p}+\left\||g|^{\frac{1}{p}} u\right\|_{L^{p}\left(\mathbb{R}^{N} \backslash \Omega\right)}^{p}\\
& \quad \quad \quad \quad \quad \quad \quad \quad \quad \quad \quad \quad \quad \quad \quad \quad \quad +\alpha\|\nabla u\|_{L^{p}(\Omega)}^{p}+\frac{1}{2} \int_{(0,1)}[u]_{s,p}^{p} d \mu(s)\Bigg)^{\frac{1}{p}}. 
\end{split}
\end{align}
The space $\mathcal{W}_{\mu, p}(\Omega)$ in \eqref{whole space} is defined as
\begin{equation}\label{space mu p}
\mathcal{W}_{\mu, p}(\Omega):=\left\{u: \mathbb{R}^{N} \rightarrow \mathbb{R} \text { measurable }:\|u\|_{\mu, p}<+\infty\right\}, 
\end{equation}
with the norm
\begin{equation}\label{norm mu p}
\|u\|_{\mu, p}:=\left(\|u\|_{L^{p}(\Omega)}^{p}+\left\||g|^{\frac{1}{p}} u\right\|_{L^{p}\left(\mathbb{R}^{N} \backslash \Omega\right)}^{p}+\frac{1}{2} \int_{(0,1)}[u]_{s,p}^{p} d \mu(s)\right)^{\frac{1}{p}}. 
\end{equation}

Following the ideas in \cite{DLSV-2025}, we prove some properties and embedding results of the spaces introduced above.

\begin{prop}\label{reflex mu p}
The space $\mathcal{W}_{\mu, p}(\Omega)$ endowed with the norm \eqref{norm mu p} is a reflexive Banach space. 
\end{prop}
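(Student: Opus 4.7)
The plan is to realize $\mathcal{W}_{\mu,p}(\Omega)$ as a closed isometric subspace of a reflexive product of $L^{p}$ spaces. By Fubini's theorem applied to the nonnegative integrand in the Gagliardo term,
\[
\frac{1}{2}\int_{(0,1)} [u]_{s,p}^p\, d\mu(s)\;=\;\|F_u\|_{L^p(Z)}^p,
\]
where $Z:=\mathcal{Q}\times(0,1)$ carries the product measure $dx\,dy\,d\mu(s)$ and
\[
F_u(x,y,s):=\Bigl(\tfrac{1}{2}c_{N,s,p}\Bigr)^{1/p}\frac{u(x)-u(y)}{|x-y|^{(N+ps)/p}}.
\]
The map
\[
T(u):=\bigl(u|_{\Omega},\;u|_{\mathbb{R}^{N}\setminus\Omega},\;F_{u}\bigr)
\]
is then a linear isometry from $\mathcal{W}_{\mu,p}(\Omega)$ into the $\ell^{p}$-product
\[
X:=L^{p}(\Omega)\;\oplus\;L^{p}\!\bigl(\mathbb{R}^{N}\setminus\Omega,\,|g|\,dx\bigr)\;\oplus\;L^{p}(Z).
\]

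Since $p\in(1,\infty)$, each summand is an $L^{p}$ space over a measure space and is therefore reflexive, and finite $\ell^{p}$-products of reflexive Banach spaces are reflexive; hence $X$ is reflexive. It remains to prove that $T\bigl(\mathcal{W}_{\mu,p}(\Omega)\bigr)$ is closed in $X$, which I would do by establishing completeness of $\mathcal{W}_{\mu,p}(\Omega)$ directly. Given a Cauchy sequence $\{u_{n}\}$, each of the three components of $T(u_{n})$ is Cauchy in the corresponding $L^{p}$ space, and thus converges to some $u^{(1)}\in L^{p}(\Omega)$, some $w\in L^{p}(\mathbb{R}^{N}\setminus\Omega,|g|\,dx)$, and some $F\in L^{p}(Z)$. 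Passing to a single subsequence, one obtains pointwise a.e.\ convergence on $\Omega$, on $\{g>0\}$, and of $F_{u_{n_{k}}}$ on $Z$; defining $u$ by gluing these pointwise limits (and choosing an arbitrary representative where the norm provides no control), the pointwise identity $F_{u_{n_{k}}}(x,y,s)\to F_{u}(x,y,s)$ forces $F=F_{u}$ a.e. A Fatou argument shows $u\in\mathcal{W}_{\mu,p}(\Omega)$, and uniqueness of limits in the product $X$ then promotes the convergence $T(u_{n})\to T(u)$ to the whole sequence, i.e.\ $u_{n}\to u$ in $\|\cdot\|_{\mu,p}$.

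Reflexivity of $\mathcal{W}_{\mu,p}(\Omega)$ then follows at once, since $T$ is an isometric isomorphism onto a closed subspace of the reflexive Banach space $X$, and closed subspaces of reflexive Banach spaces are reflexive. The step requiring the most care is the extraction of one common a.e.\ convergent subsequence across all three factors (using a diagonal procedure and the disintegration of $dx\,dy\,d\mu(s)$) and the pointwise identification of $F$ with $F_{u}$; once this is secured, the Banach-space conclusion is purely abstract.
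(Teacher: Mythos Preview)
Your overall strategy---isometric embedding into a reflexive $\ell^p$-product of $L^p$ spaces and completeness via componentwise Cauchy convergence---is exactly the paper's approach. However, there is a genuine gap in your completeness argument at precisely the step you flag as delicate.

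You propose to define the limit $u$ by gluing the $L^p(\Omega)$-limit on $\Omega$, the $L^p(|g|\,dx)$-limit on $\{g>0\}$, and ``an arbitrary representative where the norm provides no control'', i.e.\ on $(\mathbb{R}^N\setminus\Omega)\cap\{g=0\}$. With this choice the asserted pointwise identity $F_{u_{n_k}}(x,y,s)\to F_u(x,y,s)$ is unjustified: for $x\in\Omega$ and $y\in(\mathbb{R}^N\setminus\Omega)\cap\{g=0\}$ the value $F_u(x,y,s)$ depends on your arbitrary choice of $u(y)$, and nothing forces $u_{n_k}(y)$ to converge to it. Since the paper's main setting is homogeneous Neumann data ($g\equiv 0$), this is not a boundary case but the principal one, and with the wrong $u$ on $\mathbb{R}^N\setminus\Omega$ the Gagliardo term in $\|u_m-u\|_{\mu,p}$ does not go to zero. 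The paper closes this gap by reversing the logic: it uses the a.e.\ convergence $F_{u_{n_k}}\to F$ on $\mathcal{Q}\times(0,1)$ together with $u_{n_k}(x_0)\to u(x_0)$ at a suitably chosen anchor point $x_0\in\Omega$ (whose existence requires a Fubini argument) to \emph{determine} $u(y)$ for a.e.\ $y\in\mathbb{R}^N\setminus\Omega$ via
\[
u_{n_k}(y)=u_{n_k}(x_0)-\bigl(\tfrac{1}{2}c_{N,s,p}\bigr)^{-1/p}|x_0-y|^{(N+ps)/p}F_{u_{n_k}}(x_0,y,s),
\]
the right-hand side converging as $k\to\infty$. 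Only with this $u$ does Fatou's lemma deliver $\|u_m-u\|_{\mu,p}\to 0$. Once you replace the ``arbitrary representative'' step by this extraction, your outline is complete and coincides with the paper's proof.
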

\begin{proof} The proof proceeds in three steps.

\noindent ($i$) $\|\cdot\|_{\mu, p}$ defines a norm. 

If $\|u\|_{\mu, p}=0$, then we have $\|u\|_{L^{p}(\Omega)}=0$. Consequently, $u=0$ a.e. in $\Omega$. We also have
\begin{equation*}
    \int_{(0,1)}\left(c_{N, s, p} \iint_{\mathcal{Q}} \frac{|u(x)-u(y)|^{p}}{|x-y|^{N+ps}} dx dy\right) d\mu(s)=0,
\end{equation*}
which implies
\begin{equation*}
    \iint_{\mathcal{Q}} \frac{|u(x)-u(y)|^{p}}{|x-y|^{N+p s}} dx d y=0 \quad \text { for any } s \in \operatorname{supp}(\mu).
\end{equation*}
Therefore, we conclude that $|u(x)-u(y)|=0$ for any $(x, y) \in \mathcal{Q}$. In particular, taking $x \in \mathbb{R}^{N} \backslash \Omega$ and $y \in \Omega$, we obtain
\begin{equation*}
    u(x)=u(x)-u(y)=0.
\end{equation*}
Hence, we have $u=0$ for a.e. $x \in \mathbb{R}^{N}$.

\noindent ($ii$) The space $\mathcal{W}_{\mu, p}(\Omega)$ is complete with respect to the norm defined in \eqref{norm mu p}.

Let $\{u_{n}\}$ be a Cauchy sequence in $\mathcal{W}_{\mu, p}(\Omega)$. For the moment, assume that $\{u_{n}\}$ converges to some $u$ a.e. in $\mathbb{R}^{N}$. Then, by applying Fatou’s Lemma and using the fact that $\{u_{n}\}$ is a Cauchy sequence, we conclude that for any $\varepsilon>0$ there exists $N_{\varepsilon}>0$ such that, for any $m \geq N_{\varepsilon}$,
\begin{align*}
 \varepsilon^{p} &\geq \liminf _{n \rightarrow+\infty}\left\|u_{m}-u_{n}\right\|_{\mu,p}^{p} \\
& \geq \liminf _{n \rightarrow+\infty} \int_{\Omega}\left|u_{m}-u_{n}\right|^{p} d x+\liminf _{n \rightarrow+\infty} \int_{\mathbb{R}^{N} \backslash \Omega}|g|\left|u_{m}-u_{n}\right|^{p} d x \\
& +\frac{1}{2} \liminf _{n \rightarrow+\infty} \int_{(0,1)} c_{N, s, p} \int_{\mathcal{Q}} \frac{\left|\left(u_{m}-u_{n}\right)(x)-\left(u_{m}-u_{n}\right)(y)\right|^{p}}{|x-y|^{N+p s}} dx d yd\mu(s)\\
 &\geq  \int_{\Omega}\left|u_{m}-u\right|^{p} dx+\int_{\mathbb{R}^{N} \backslash \Omega}|g|\left|u_{m}-u\right|^{p} dx \\
& +\frac{1}{2} \int_{(0,1)} c_{N, s, p} \iint_{Q} \frac{\left|\left(u_{m}-u\right)(x)-\left(u_{m}-u\right)(y)\right|^{p}}{|x-y|^{N+p s}} dx dy d\mu(s) \\
& = \left\|u_{m}-u\right\|_{\mu,p}^{p}.
\end{align*}
Hence, $\left\{u_m\right\}$ converges to $u$ in $\mathcal{W}_{\mu, p}(\Omega)$, ensuring that the space $\mathcal{W}_{\mu, p}(\Omega)$ is complete. 

It remains to show that $u_{n}\to u$ a.e. in $\mathbb{R}^{N}$. Observe that that $\{u_{n}\}$ is a Cauchy sequence in $L^{p}(\Omega)$, and there exists $u \in L^{p}(\Omega)$ such that, up to a subsequence, $u_{n}\to u$ in $L^{p}(\Omega)$ and $u_{n}\to u$ a.e. in $\Omega$. Hence, there exists a subset $A \subset \mathbb{R}^{N}$ such that
\begin{equation}\label{B.1}
\left|A\right|=0 \quad \text { and } \quad u_{n}(x) \rightarrow u(x) \quad \text { for all } x \in \Omega \backslash A . 
\end{equation}
Given any function $U: \mathbb{R}^{N} \rightarrow \mathbb{R}$, for every $(x, y) \in \mathbb{R}^{2 N}$ and every $s \in(0,1)$, we set
\begin{equation}\label{B.2}
F_{U}(x, y, s):=\frac{(U(x)-U(y)) \chi_{\mathcal{Q}}(x, y)}{|x-y|^{(N+p s) / p}}. 
\end{equation}
Then, we have
\begin{equation*}
    F_{u_{n}}(x, y, s)-F_{u_{m}}(x, y, s)=\frac{\left(u_{n}(x)-u_{n}(y)-u_{m}(x)+u_{m}(y)\right) \chi_{\mathcal{Q}}(x, y)}{|x-y|^{(N+ps) / p}}.
\end{equation*}
Since $\{u_{n}\}$ is a Cauchy sequence, we deduce that for any $\varepsilon>0$ there exists an integer $N_{\varepsilon}$ such that for all indices $n, m \geq N_{\varepsilon}$, we have
\begin{equation*}
    \varepsilon^{p} \geq \int_{(0,1)} c_{N, s, p} \iint_{Q} \frac{\left|\left(u_{n}-u_{m}\right)(x)-\left(u_{n}-u_{m}\right)(y)\right|^{p}}{|x-y|^{N+p s}} dx dy d\mu(s)=\left\|F_{u_{n}}-F_{u_{m}}\right\|_{L^{p}\left(\mathbb{R}^{2 N} \times(0,1)\right)}^{p}.
\end{equation*}
This implies that $F_{u_{n}}$ is a Cauchy sequence in $L^{p}\left(\mathbb{R}^{2 N} \times(0,1), dx dy d\mu\right)$. Therefore, there exists $F\in L^{p}\left(\mathbb{R}^{2 N} \times(0,1), dx dy d\mu\right)$ such that, up to a subsequence, $F_{u_{n}}\to F$ in $L^{p}\left(\mathbb{R}^{2 N} \times(0,1), dx dy d\mu\right)$ and $F_{u_{n}}(x, y, s)\to F(x, y, s)$ a.e. in $\mathbb{R}^{2 N} \times(0,1)$. Hence, there exist subsets $B \subset \mathbb{R}^{2 N}$ and $\Sigma \subset(0,1)$ such that
\begin{align}\label{B.3}
\begin{split}
    & \left|B\right|=0, \quad \mu(\Sigma)=0, \\
& \text { and } \quad F_{u_{n}}(x, y, s) \rightarrow F(x, y, s) \text { for all }(x, y) \in \mathbb{R}^{2 N} \backslash B, \quad s \in(0,1) \backslash \Sigma . 
\end{split}
\end{align}
We now fix $s \in(0,1) \backslash \Sigma$, and for any $x \in \Omega$ define the sets
\begin{align*}
P_{x} & :=\left\{y \in \mathbb{R}^{N}:(x, y) \in \mathbb{R}^{2 N} \backslash B\right\}, \\
M & :=\left\{(x, y) \in \mathbb{R}^{2 N}: x \in \Omega \text { and } y \in \mathbb{R}^{N} \backslash P_{x}\right\}, \\
N & :=\left\{x \in \Omega:\left|\mathbb{R}^{N} \backslash P_{x}\right|=0\right\} .
\end{align*}
We claim that the set $N \setminus A$ is non-empty. First, observe that if $(x, y) \in M$, then $y \in \mathbb{R}^{N} \backslash P_{x}$, which means that $(x, y) \notin \mathbb{R}^{2 N} \backslash B$, and hence $(x, y) \in B$. This means that
\begin{equation}\label{B.4}
M \subseteq B. 
\end{equation}
Combining \eqref{B.3} and \eqref{B.4}, we deduce that $|M|=0$. Therefore, applying Fubini’s theorem yields
\begin{equation*}
    0=|M|=\int_{\Omega}\left|\mathbb{R}^{N} \backslash P_{x}\right| d x.
\end{equation*}
Hence, we have $\left|\mathbb{R}^{N} \backslash P_{x}\right|=0$ for a.e. $x \in \Omega$, and consequently, $|\Omega \backslash N|=0$. Along with \eqref{B.1}, this implies
\begin{equation*}
    \left|\Omega \backslash\left(N \backslash A\right)\right|=\left|(\Omega \backslash N) \cup A\right| \leq|\Omega \backslash N|+\left|A\right|=0 .
\end{equation*}
Thus, in particular, $N \backslash A\neq \emptyset$, as claimed. 

We now fix $x_{0} \in N \backslash A$. Since $x_{0} \in \Omega \backslash A$, using \eqref{B.1}, we get
\begin{equation}\label{B.5.1}
    \lim _{n \rightarrow+\infty} u_{n}\left(x_{0}\right)=u\left(x_{0}\right).
\end{equation}
 In addition, since $x_{0} \in N$, we get $\left|\mathbb{R}^{N} \backslash P_{x_{0}}\right|=0$. This means that for any $y \in P_{x_{0}}$ (i.e., for a.e. $y \in \mathbb{R}^{N}$), we have $\left(x_{0}, y\right) \in \mathbb{R}^{2 N} \backslash B$. Consequently, from \eqref{B.2} and \eqref{B.3}, we deduce that
\begin{equation}\label{B.5}
\lim _{n \rightarrow+\infty} F_{u_{n}}\left(x_{0}, y, s\right)= \lim _{n \rightarrow+\infty}\frac{\left(u_{n}\left(x_{0}\right)-u_{n}(y)\right) \chi_{\mathcal{Q}}\left(x_{0}, y\right)}{\left|x_{0}-y\right|^{(N+ps) / p}}=F\left(x_{0}, y, s\right).
\end{equation}
Moreover, since $\Omega \times\left(\mathbb{R}^{N} \backslash \Omega\right) \subseteq \mathcal{Q}$, it follows from \eqref{B.2} that
\begin{equation}\label{B.5.2}
    F_{u_{n}}\left(x_{0}, y, s\right):=\frac{u_{n}\left(x_{0}\right)-u_{n}(y)}{\left|x_{0}-y\right|^{(N+p s) / p}} \quad \text { for a.e. } y \in \mathbb{R}^{N} \backslash \Omega.
\end{equation}
Hence, combining \eqref{B.5.1}-\eqref{B.5.2}, we obtain
\begin{align}\label{B.L}
\begin{split}
    \lim _{n \rightarrow+\infty} u_{n}(y) & =\lim _{n\rightarrow+\infty}\left(u_{n}\left(x_{0}\right)-\left|x_{0}-y\right|^{(N+ps) / p} F_{u_{n}}\left(x_{0}, y, s\right)\right) \\
& =u\left(x_{0}\right)-\left|x_{0}-y\right|^{(N+ps) / p} F\left(x_{0}, y, s\right)
\end{split}
\end{align}
for a.e. $y \in \mathbb{R}^{N} \backslash \Omega$. Note that the limit in \eqref{B.L} does not depend on $s$. Combining \eqref{B.L} and \eqref{B.1}, up to a change of notation, we conclude that $\{u_{n}\}$ converges a.e. in $\mathbb{R}^{N}$ to some $u$, as desired.

\noindent($iii$) The space $\mathcal{W}_{\mu,p}(\Omega)$ is reflexive. 

To prove reflexivity, we define the space $\mathcal{A}=L^p(\mathcal{Q}\times(0,1), d x d y d\mu(s)) \times L^p(\Omega, d x) \times L^p\left(\mathbb{R}^N \backslash \Omega,|g| d x\right)$ equipped with the norm in \eqref{norm mu p}. This product space is a reflexive Banach space. 

Consider the operator $\mathcal{T}: \mathcal{W}_{\mu,p}(\Omega) \rightarrow \mathcal{A}$ defined as
\begin{equation*}
    \mathcal{T} u=\left[c_{N,s,p}^{\frac{1}{p}}\frac{|u(x)-u(y)|}{|x-y|^{\frac{N}{p}+s}} \chi_{\mathcal{Q}\times (0,1)}(x, y, s), u \chi_{\Omega}, u \chi_{\mathbb{R}^N \backslash \Omega}\right],
\end{equation*}
where $\chi_{\mathcal{M}}(\cdot)$ denotes the characteristic function of a measurable set $\mathcal{M}$. Note that $\mathcal{T}\left(\mathcal{W}_{\mu,p}(\Omega)\right)$ is a closed subspace of $\mathcal{A}$. Moreover, $\mathcal{W}_{\mu,p}(\Omega)$ is a Banach space. Then, using the fact that $\mathcal{T}$ is an isometry from $\mathcal{W}_{\mu,p}(\Omega)$ into $\mathcal{A}$, we conclude that $\mathcal{W}_{\mu,p}(\Omega)$ is a reflexive Banach space as well.
\end{proof}

\begin{prop}\label{Reflex of full}
   The space $\mathcal{W}_{\alpha, \mu, p}(\Omega)$ endowed with the norm \eqref{whole norm} is a reflexive Banach space.
\end{prop}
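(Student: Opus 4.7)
The plan is to treat the three cases in the definition \eqref{whole space} separately. When $\mu \equiv 0$ we have $\mathcal{W}_{\alpha,\mu,p}(\Omega) = W^{1,p}(\Omega)$, which is a reflexive Banach space for every $p \in (1,+\infty)$ by classical Sobolev space theory. When $\alpha = 0$ the statement is exactly Proposition \ref{reflex mu p}. Hence the substance lies in the remaining case $\alpha \neq 0$ and $\mu \not\equiv 0$, which I address below.

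For that case I would first verify that $\|\cdot\|_{\alpha,\mu,p}$ is a norm: absolute homogeneity is immediate, the triangle inequality follows from a Minkowski-type estimate applied to each of the summands in \eqref{whole norm}, and definiteness comes from the fact that $\|u\|_{\alpha,\mu,p} = 0$ forces $\|u\|_{L^p(\Omega)} = 0$ together with the vanishing of the Gagliardo piece, at which point the argument already used in Proposition \ref{reflex mu p} yields $u = 0$ a.e.\ in $\mathbb{R}^N$. Completeness I would deduce from the completeness of each factor: since $\|u\|_{W^{1,p}(\Omega)}^p \le C_\alpha \|u\|_{\alpha,\mu,p}^p$ and $\|u\|_{\mu,p}^p \le \|u\|_{\alpha,\mu,p}^p$, any Cauchy sequence $\{u_n\}$ in $\mathcal{W}_{\alpha,\mu,p}(\Omega)$ is Cauchy both in $W^{1,p}(\Omega)$ and in $\mathcal{W}_{\mu,p}(\Omega)$. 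The two resulting limits must agree on $\Omega$ (both coinciding with the $L^p(\Omega)$-limit of $\{u_n\}$), and gluing them produces a function $u \in W^{1,p}(\Omega) \cap \mathcal{W}_{\mu,p}(\Omega) = \mathcal{W}_{\alpha,\mu,p}(\Omega)$ with $u_n \to u$ in the full norm.

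For reflexivity I would adapt the isometric-embedding argument of Proposition \ref{reflex mu p} by enlarging the target with a gradient slot and a boundary slot. Concretely, consider
$$\mathcal{A}' := L^p(\mathcal{Q}\times(0,1); dx\,dy\,d\mu) \times L^p(\Omega) \times L^p(\mathbb{R}^N\setminus\Omega; |g|dx) \times L^p(\partial\Omega; |h|d\mathcal{H}^{N-1}) \times L^p(\Omega;\mathbb{R}^N),$$
which is a finite product of reflexive Banach spaces and hence itself reflexive, and define $\mathcal{T}\colon \mathcal{W}_{\alpha,\mu,p}(\Omega) \to \mathcal{A}'$ by
$$\mathcal{T}u := \Bigl[2^{-1/p}c_{N,s,p}^{1/p}\tfrac{|u(x)-u(y)|}{|x-y|^{N/p+s}}\chi_{\mathcal{Q}\times(0,1)},\; u\chi_\Omega,\; u\chi_{\mathbb{R}^N\setminus\Omega},\; u|_{\partial\Omega},\; \alpha^{1/p}\nabla u\Bigr],$$
where the trace $u|_{\partial\Omega}$ is well-defined because $u \in W^{1,p}(\Omega)$. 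By construction $\|\mathcal{T}u\|_{\mathcal{A}'} = \|u\|_{\alpha,\mu,p}$, so $\mathcal{T}$ is a linear isometry and its image is closed in $\mathcal{A}'$ by the completeness step. Since closed subspaces of reflexive Banach spaces are reflexive, the conclusion follows. The only delicate bookkeeping I expect is in the completeness step, namely the consistent gluing of the $W^{1,p}(\Omega)$- and $\mathcal{W}_{\mu,p}(\Omega)$-limits on $\Omega$ and outside $\Omega$; beyond that, the argument is a streamlined variant of the proof of Proposition \ref{reflex mu p}.
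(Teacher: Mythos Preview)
Your proposal is correct and follows essentially the same approach as the paper. The paper's own proof is extremely terse: it simply states that ``all the properties of $\mathcal{W}_{\alpha,\mu,p}(\Omega)$ follow from its definition in \eqref{whole space} and Proposition \ref{reflex mu p}'' and then only spells out the definiteness of the norm (exactly the argument you sketched, using the vanishing of the Gagliardo piece to force $u=0$ a.e.\ in $\mathbb{R}^N$ when $\mu\not\equiv 0$). Your version simply makes explicit the completeness argument via simultaneous Cauchy sequences in $W^{1,p}(\Omega)$ and $\mathcal{W}_{\mu,p}(\Omega)$, and the reflexivity argument via an enlarged isometric embedding with gradient and trace slots---both of which the paper leaves implicit in its reference to Proposition \ref{reflex mu p}.
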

\begin{proof}
   All the properties of $\mathcal{W}_{\alpha, \mu, p}(\Omega)$ follow from its definition in \eqref{whole space} and Proposition \ref{reflex mu p}. Thus, we only need to show that $\|\cdot\|_{\alpha, \mu, p}$ defines a norm.

If $\mu\equiv0$, then $\|u\|_{\alpha, \mu, p}=0$ implies $\|u\|_{L^{p}(\Omega)}=0$, and hence $u=0$ a.e. in $\Omega$. If $\mu \not \equiv 0$, then we also have
\begin{equation*}
    \int_{(0,1)}\left(c_{N, s, p} \iint_{\mathcal{Q}} \frac{|u(x)-u(y)|^{p}}{|x-y|^{N+p s}} dxdy\right) d\mu(s)=0,
\end{equation*}
which yields
\begin{equation*}
    \iint_{\mathcal{Q}} \frac{|u(x)-u(y)|^{p}}{|x-y|^{N+ps}} dx dy=0 \quad \text { for any } s \in \operatorname{supp}(\mu).
\end{equation*}
Thus, we have $|u(x)-u(y)|=0$ for any $(x, y) \in \mathcal{Q}$. In particular, taking $x \in \mathbb{R}^{n} \backslash \Omega$ and $y \in \Omega$, we obtain
\begin{equation*}
    u(x)=u(x)-u(y)=0,
\end{equation*}
which implies that $u=0$ for a.e. $x \in \mathbb{R}^{N}$. This concludes the proof.
\end{proof}

We now state some results that will be used to prove the embedding theorems for the spaces $\mathcal{W}_{\mu, p}(\Omega)$ and $\mathcal{W}_{\alpha, \mu, p}(\Omega)$.
\begin{lem}\label{tech lem for embed} 
    Let $\Omega$ be an open subset of $\mathbb{R}^{N}$, $\underline{s} \in(0,1)$, and $p\in(1,+\infty)$. Suppose $\underline{s} \leq s_{1} \leq s_{2}<1$. Then, for any measurable function $u: \Omega \rightarrow \mathbb{R}$ we have
\begin{equation*}
    \|u\|_{L^{p}(\Omega)}^{p}+\iint_{\Omega \times \Omega} \frac{|u(x)-u(y)|^{p}}{|x-y|^{N+p s_{1}}} dx dy \leq C\left(\|u\|_{L^{p}(\Omega)}^{p}+\iint_{\Omega \times \Omega} \frac{|u(x)-u(y)|^{p}}{|x-y|^{N+p s_{2}}}dxdy\right),
\end{equation*}
where $C=C(N, \underline{s}, p) \geq 1$.

The exact value of the constant $C$ can be determined as
\begin{equation*}
    C(N, \underline{s}, p):=1+\frac{2^p \omega_{N-1}}{p \underline{s}},
\end{equation*}
where $\omega_{N-1}$ denotes the measure of the unit sphere in $\mathbb{R}^{N}$.
\end{lem}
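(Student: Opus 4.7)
The plan is to split the double integral over $\Omega\times\Omega$ according to whether $|x-y|<1$ or $|x-y|\geq 1$, since the relative sizes of $|x-y|^{-(N+ps_1)}$ and $|x-y|^{-(N+ps_2)}$ reverse across this threshold.

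On the region $\{|x-y|<1\}$, because $s_1\leq s_2$ gives $|x-y|^{N+ps_1}\geq |x-y|^{N+ps_2}$, I would estimate pointwise
\begin{equation*}
\frac{|u(x)-u(y)|^{p}}{|x-y|^{N+ps_1}}\;\leq\;\frac{|u(x)-u(y)|^{p}}{|x-y|^{N+ps_2}},
\end{equation*}
so the contribution of this region is bounded by the full $s_2$-Gagliardo integral. On the region $\{|x-y|\geq 1\}$, using $s_1\geq\underline{s}$ (and hence $|x-y|^{N+ps_1}\geq|x-y|^{N+p\underline{s}}$ there) together with the convexity bound $|u(x)-u(y)|^{p}\leq 2^{p-1}(|u(x)|^{p}+|u(y)|^{p})$, I would get
\begin{equation*}
\iint_{\Omega\times\Omega,\,|x-y|\geq 1}\frac{|u(x)-u(y)|^{p}}{|x-y|^{N+ps_1}}\,dx\,dy\;\leq\;2^{p-1}\iint_{\Omega\times\Omega,\,|x-y|\geq 1}\frac{|u(x)|^{p}+|u(y)|^{p}}{|x-y|^{N+p\underline{s}}}\,dx\,dy.
\end{equation*}
By symmetry in $x,y$ this reduces to $2^{p}\int_{\Omega}|u(x)|^{p}\bigl(\int_{|x-y|\geq 1}|x-y|^{-(N+p\underline{s})}\,dy\bigr)dx$, and a routine passage to polar coordinates gives $\int_{\mathbb{R}^N\setminus B_1(x)}|x-y|^{-(N+p\underline{s})}\,dy=\omega_{N-1}\int_{1}^{\infty}r^{-1-p\underline{s}}\,dr=\omega_{N-1}/(p\underline{s})$, independent of $x$.

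Combining the two regions yields
\begin{equation*}
\iint_{\Omega\times\Omega}\frac{|u(x)-u(y)|^{p}}{|x-y|^{N+ps_1}}\,dx\,dy\;\leq\;\iint_{\Omega\times\Omega}\frac{|u(x)-u(y)|^{p}}{|x-y|^{N+ps_2}}\,dx\,dy+\frac{2^{p}\omega_{N-1}}{p\underline{s}}\|u\|_{L^{p}(\Omega)}^{p}.
\end{equation*}
Adding $\|u\|_{L^p(\Omega)}^p$ to both sides and factoring out $1+\frac{2^{p}\omega_{N-1}}{p\underline{s}}$ on the right gives the stated inequality with exactly the claimed constant $C(N,\underline{s},p)=1+\frac{2^{p}\omega_{N-1}}{p\underline{s}}\geq 1$.

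There is no serious obstacle here; the only subtlety worth flagging is choosing the correct cutoff at $|x-y|=1$ (so that both pointwise comparisons go in the right direction on their respective subregions) and using $\underline{s}$ rather than $s_1$ in the tail estimate to obtain a constant that is uniform in $s_1,s_2$ (which will be essential when integrating the resulting inequality against the measure $\mu$ later in the paper).
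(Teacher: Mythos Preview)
Your proposal is correct and follows essentially the same approach as the paper: split the double integral at $|x-y|=1$, use the pointwise comparison $|x-y|^{-(N+ps_1)}\leq |x-y|^{-(N+ps_2)}$ on the near-diagonal region, and on the far region apply the convexity bound $|u(x)-u(y)|^p\leq 2^{p-1}(|u(x)|^p+|u(y)|^p)$ together with the polar-coordinate computation $\int_{|z|\geq 1}|z|^{-(N+ps_1)}\,dz=\omega_{N-1}/(ps_1)\leq \omega_{N-1}/(p\underline{s})$. The only cosmetic difference is that the paper keeps $s_1$ through the polar integral and invokes $s_1\geq\underline{s}$ at the last step, whereas you substitute $\underline{s}$ one line earlier; the resulting constant is identical.
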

\begin{proof}
We start by writing
\begin{align*}
\iint_{\Omega \times \Omega}  \frac{|u(x)-u(y)|^{p}}{|x-y|^{N+p s_{1}}} dx dy & =\iint_{(\Omega \times \Omega) \cap\{|x-y|<1\}} \frac{|u(x)-u(y)|^{p}}{|x-y|^{N+p s_{1}}} dx dy\\
&+\iint_{(\Omega \times \Omega) \cap\{|x-y| \geq 1\}} \frac{|u(x)-u(y)|^{p}}{|x-y|^{N+p s_{1}}} dx dy.
\end{align*}
Since $s_{1} \leq s_{2}$, it follows that
\begin{align}\label{tech ineq estim 1}
\begin{split}
    \iint_{(\Omega \times \Omega) \cap\{|x-y|<1\}} \frac{|u(x)-u(y)|^{p}}{|x-y|^{N+p s_{1}}} dx dy & \leq \iint_{(\Omega \times \Omega) \cap\{|x-y|<1\}} \frac{|u(x)-u(y)|^{p}}{|x-y|^{N+p s_{2}}} dx dy  \\
& \leq \iint_{\Omega \times \Omega} \frac{|u(x)-u(y)|^{p}}{|x-y|^{N+p s_{2}}} dx dy.
\end{split}
\end{align}
Moreover, using the facts $N+p s_{1}>N$ and $s_{1} \geq \underline{s}>0$, we deduce
\begin{align}\label{tech ineq estim 2}
\begin{split}
    & \iint_{(\Omega \times \Omega) \cap\{|x-y| \geq 1\}} \frac{|u(x)-u(y)|^{p}}{|x-y|^{N+p s_{1}}} dx dy  \leq \frac{2^p \omega_{N-1}}{p\underline{s}}\|u\|_{L^{p}(\Omega)}^{p}. 
\end{split}
\end{align}
Indeed, let us write
\begin{equation*}
    I=\iint_{(\Omega \times \Omega) \cap\{|x-y| \geq 1\}} \frac{|u(x)-u(y)|^{p}}{|x-y|^{N+p s_{1}}} dx dy.
\end{equation*}
Then we have
\begin{equation}\label{tech ineq tech 1}
    I\leq 2^{p-1}\left(\iint_{(\Omega \times \Omega) \cap\{|x-y| \geq 1\}} \frac{|u(x)|^{p}}{|x-y|^{N+p s_{1}}} dx dy+\iint_{(\Omega \times \Omega) \cap\{|x-y| \geq 1\}} \frac{|u(y)|^{p}}{|x-y|^{N+p s_{1}}} dx dy\right).
\end{equation}
Observe that
\begin{equation}\label{tech ineq tech 2}
    \iint_{(\Omega \times \Omega) \cap\{|x-y| \geq 1\}} \frac{|u(x)|^p}{|x-y|^{N+p s_1}} d x d y=\int_{\Omega}|u(x)|^p\left(\int_{\{y \in \Omega:|x-y| \geq 1\}} \frac{d y}{|x-y|^{N+p s_1}}\right) d x .
\end{equation}
Using the change of variables, we derive
\begin{equation*}
    \int_{\{y \in \Omega:|x-y| \geq 1\}} \frac{d y}{|x-y|^{N+p s_1}}=\int_{(x-\Omega) \cap\{|z| \geq 1\}} \frac{d z}{|z|^{N+p s_1}} \leq \int_{|z| \geq 1} \frac{d z}{|z|^{N+p s_1}}.
\end{equation*}
Moreover, since $N+p s_1>N$ and $s_1 \geq \underline{s}>0$, we get
\begin{equation*}
    \int_{|z| \geq 1} \frac{d z}{|z|^{N+p s_1}}=\omega_{N-1} \int_1^{\infty} \frac{r^{N-1}}{r^{N+p s_1}} d r=\omega_{N-1} \int_1^{\infty} r^{-1-p s_1} d r=\frac{\omega_{N-1}}{p s_1}\leq \frac{\omega_{N-1}}{p \underline{s}},
\end{equation*}
where $\omega_{N-1}$ is the surface measure of the unit sphere in $\mathbb{R}^N$. 

Taking all the above into account, we conclude from \eqref{tech ineq tech 2} that
\begin{equation*}
    \iint_{(\Omega \times \Omega) \cap\{|x-y| \geq 1\}} \frac{|u(x)|^p}{|x-y|^{N+p s_1}} d x d y\leq \frac{\omega_{N-1}}{p \underline{s}}\int_{\Omega}|u(x)|^p d x .
\end{equation*}
Substituting this into \eqref{tech ineq tech 1} yields
\begin{align*}
I\leq 2^{p-1}2\frac{\omega_{N-1}}{p \underline{s}}\int_{\Omega}|u(x)|^p d x= \frac{2^{p} \omega_{N-1}}{p\underline{s}}\|u\|_{L^p(\Omega)}^p.
\end{align*}

Finally, combining \eqref{tech ineq estim 1} and \eqref{tech ineq estim 2}, we get
\begin{align*}
\|u\|_{L^{p}(\Omega)}^{p}&+\iint_{\Omega \times \Omega} \frac{|u(x)-u(y)|^{p}}{|x-y|^{N+p s_{1}}} dx dy \\
& \leq \left(1+\frac{2^p \omega_{N-1}}{p\underline{s}}\right)\|u\|_{L^{p}(\Omega)}^{p}+\iint_{\Omega \times \Omega} \frac{|u(x)-u(y)|^{p}}{|x-y|^{N+p s_{2}}} dx dy \\
& \leq C\left(\|u\|_{L^{p}(\Omega)}^{p}+\iint_{\Omega \times \Omega} \frac{|u(x)-u(y)|^{p}}{|x-y|^{N+p s_{2}}} dx dy\right),
\end{align*}
as desired.
\end{proof}

We now prove an embedding result for the space $\mathcal{W}_{\mu,p}(\Omega)$ defined in \eqref{space mu p}.
\begin{prop}
    Let $\Omega$ be a bounded open set in $\mathbb{R}^{N}$ with Lipschitz boundary. Let $\widetilde{s}_{\sharp}$ be as in \eqref{exp s sharp} and $1<p<N/\widetilde{s}_\sharp$. Let $p_{\widetilde{s}_{\sharp}}^{*}$ be given by
    $$p_{\widetilde{s}_{\sharp}}^*:=\frac{pN}{N-p\widetilde{s}_{\sharp}}.$$
    Then the embedding 
    $$\mathcal{W}_{\mu, p}(\Omega)\hookrightarrow L^q(\Omega)$$ 
    is continuous for all $q \in[1,p_{\widetilde{s}_{\sharp}}^{*}]$ and compact for all $q \in[1,p_{\widetilde{s}_{\sharp}}^{*})$.
\end{prop}

\begin{proof}
First, applying Lemma \ref{tech lem for embed} with $s_{1}:=\widetilde{s}_{\sharp}$ and $s_{2}:=s \in\left[\widetilde{s}_{\sharp}, 1\right)$, we obtain that 
\begin{align}\label{embed for part 111}
\begin{split}
    &\|u\|_{L^{p}(\Omega)}^{p}+\iint_{\Omega \times \Omega} \frac{|u(x)-u(y)|^{p}}{|x-y|^{N+p s}} dx d y\\
    &\quad \quad \quad \quad \quad \quad \quad \quad \quad \quad \geq \frac{1}{C(N,\widetilde{s}_\sharp,p)}\left(\|u\|_{L^{p}(\Omega)}^{p}+\iint_{\Omega \times \Omega} \frac{|u(x)-u(y)|^{p}}{|x-y|^{N+p \widetilde{s}_{\sharp}}} dx dy\right).
\end{split}
\end{align}
From \eqref{exp s sharp}, we may assume, without loss of generality, that there exists $\delta>0$ such that $\mu\left(\left[\widetilde{s}_{\sharp}, 1-\delta\right]\right)>0$. We denote
\begin{equation}\label{embed for part 1111}
    m:=\min _{s \in[\widetilde{s}_{\sharp}, 1-\delta]}\left\{\frac{1}{\mu\left([\widetilde{s}_{\sharp}, 1-\delta]\right)}, \frac{c_{N, s, p}}{2}\right\} .
\end{equation}
Then, using \eqref{embed for part 111} and \eqref{embed for part 1111}, we get
\begin{align}\label{embed for part 2}
\|u\|_{L^{p}(\Omega)}^{p} & +\int_{(0,1)} \frac{c_{N, s, p}}{2} \iint_{\Omega \times \Omega} \frac{|u(x)-u(y)|^{p}}{|x-y|^{N+p s}} dx dy d\mu(s) \nonumber\\
& \geq \|u\|_{L^{p}(\Omega)}^{p}+\int_{[\widetilde{s}_{\sharp}, 1-\delta]} \frac{c_{N, s, p}}{2} \iint_{\Omega \times \Omega} \frac{|u(x)-u(y)|^{p}}{|x-y|^{N+p s}} dx dy d\mu(s) \nonumber\\
& =\int_{[\widetilde{s}_{\sharp}, 1-\delta]}\left(\frac{\|u\|_{L^{p}(\Omega)}^{p}}{\mu\left([\widetilde{s}_{\sharp}, 1-\delta]\right)}+\frac{c_{N, s, p}}{2} \iint_{\Omega \times \Omega} \frac{|u(x)-u(y)|^{p}}{|x-y|^{N+p s}} dx dy\right) d\mu(s)  \\
& \geq  m \int_{[\widetilde{s}_{\sharp}, 1-\delta]}\left(\|u\|_{L^{p}(\Omega)}^{p}+\iint_{\Omega \times \Omega} \frac{|u(x)-u(y)|^{p}}{|x-y|^{N+p s}} dx d y\right) d\mu(s) \nonumber\\
& \geq \frac{m}{C\left(N, \widetilde{s}_{\sharp}, p\right)} \int_{[\widetilde{s}_{\sharp}, 1-\delta]}\left(\|u\|_{L^{p}(\Omega)}^{p}+\iint_{\Omega \times \Omega} \frac{|u(x)-u(y)|^{p}}{|x-y|^{N+p \widetilde{s}_{\sharp}}} dx dy\right) d\mu(s) \nonumber\\
& \geq  C_{1}\|u\|_{W^{\widetilde{s}_\sharp, p}(\Omega)}^{p},\nonumber
\end{align}
for some $C_{1}=C_{1}\left(N, \widetilde{s}_{\sharp}, p, \delta\right)>0$.

On the other hand, by \eqref{norm mu p}, we have that
\begin{equation}\label{embed for part 1}
\|u\|_{L^{p}(\Omega)}^{p}+\int_{(0,1)} \frac{c_{N, s, p}}{2} \iint_{\Omega \times \Omega} \frac{|u(x)-u(y)|^{p}}{|x-y|^{N+p s}} dx dy d\mu(s) \leq \|u\|_{\mu,p}^{p}. 
\end{equation}

Then, combining inequalities \eqref{embed for part 2} and \eqref{embed for part 1} yields
\begin{equation*}
    \|u\|_{W^{\widetilde{s}_\sharp, p}(\Omega)}^{p} \leq C\|u\|_{\mu,p}^{p},
\end{equation*}
for some constant $C>0$.
Hence, we conclude that $\mathcal{W}_{\mu, p}(\Omega)$ is continuously embedded in $W^{\widetilde{s}_{\sharp},p}(\Omega)$, which in turn implies the continuous embedding into $L^q(\Omega)$ for all $q\in[1,p_{\widetilde{s}_{\sharp}}^{*}]$.

The compactness of the embedding is a direct consequence of \cite[Corollary 7.2]{NPV-2012}. This completes the proof.
\end{proof} 

\begin{prop}
    Let $\Omega$ be a bounded open set in $\mathbb{R}^{N}$ with Lipschitz boundary. Assume that $\alpha \neq 0$ and $1<p<N$. Then the embedding 
    $$\mathcal{W}_{\alpha, \mu, p}(\Omega)\hookrightarrow L^q(\Omega)$$ 
    is continuous for all $q \in[1,p^{*}]$ and compact for all $q \in[1,p^{*})$.
\end{prop}
\begin{proof}
  Since $\alpha \neq 0$, by the definition of the norm in \eqref{whole norm}, we have for any $u \in \mathcal{W}_{\alpha, \mu, p}(\Omega)$ and for a suitable constant $C>0$ that
\begin{equation*}
    \|u\|_{W^{1, p}(\Omega)} \leq C\|u\|_{\alpha, \mu, p}.
\end{equation*}

Thus, the embedding into $W^{1,p}(\Omega)$ is continuous. The compact embedding follows from the classical Sobolev embedding.
\end{proof}

For us, it will be useful to use the following embedding, which covers both cases $\alpha=0$ and $\alpha\neq 0$.
\begin{cor}\label{embed full}
Let $\Omega$ be a bounded open set in $\mathbb{R}^{N}$ with Lipschitz boundary. Let $s_{\sharp}$ be as in \eqref{exp s sharp gen} and $1<p<N/s_\sharp$. Then the embedding 
    $$\mathcal{W}_{\alpha, \mu, p}(\Omega)\hookrightarrow L^q(\Omega)$$ 
    is continuous for all $q\in [1, p_{s_{\sharp}}^{*}]$ and compact for all $q\in [1, p_{s_{\sharp}}^{*})$.
\end{cor}

\subsection{Integration by parts formulas}

In this subsection, we state the analogue of the divergence theorem and derive the integration by parts formula.

\begin{lem}Let $u: \mathbb{R}^{N} \rightarrow \mathbb{R}$ be a function satisfying $u \in C^{2}(\Omega)$. Suppose that the following assumptions hold:

\setword{$(i)$}{Word:item11} 
\begin{equation*}
(-\Delta)_{p}^{s} u \in L^{1}(\Omega \times(0,1));
\end{equation*}

\setword{$(ii)$}{Word:item12} The function
\begin{align*}
\begin{split}
  \Omega \times\left(\mathbb{R}^{N} \backslash \Omega\right) \times(0,1) \ni (x, y, s) \mapsto c_{N, s, p} \frac{|u(x)-u(y)|^{p-2}(u(x)-u(y))}{|x-y|^{N+p s}} \\
\end{split}
\end{align*}
belongs to $L^{1}\left(\Omega \times\left(\mathbb{R}^{N} \backslash \Omega\right) \times(0,1)\right)$.

Then,
\begin{equation}\label{div thm tech 3}
\int_{(0,1)} \int_{\Omega}(-\Delta)_{p}^{s} u(x) d x d \mu(s)=-\int_{(0,1)} \int_{\mathbb{R}^{N} \backslash \Omega} \mathscr{N}_{s, p} u(x) d x d\mu(s).
\end{equation}
\end{lem}

\begin{proof}
 Note that under the assumptions \ref{Word:item11} and \ref{Word:item12}, the integrals in \eqref{div thm tech 3} are finite.
Since
\begin{align*}
    \int_{\Omega} \int_{\Omega} \frac{|u(x)-u(y)|^{p-2}(u(x)-u(y))}{|x-y|^{N+p s}} dx dy&=\int_{\Omega} \int_{\Omega} \frac{|u(y)-u(x)|^{p-2}(u(y)-u(x))}{|x-y|^{N+p s}} dy dx\\
    &=-\int_{\Omega} \int_{\Omega} \frac{|u(x)-u(y)|^{p-2}(u(x)-u(y))}{|x-y|^{N+p s}} dxdy,
\end{align*}
we conclude that 
\begin{equation*}
     \int_{\Omega} \int_{\Omega} \frac{|u(x)-u(y)|^{p-2}(u(x)-u(y))}{|x-y|^{N+p s}} dx dy=0,
\end{equation*}
and hence 
\begin{equation*}
    \int_{(0,1)} c_{N, s, p} \int_{\Omega} \int_{\Omega} \frac{|u(x)-u(y)|^{p-2}(u(x)-u(y))}{|x-y|^{N+p s}} dx dy d\mu(s)=0.
\end{equation*}
Now, using \eqref{p frac laplacian}, \eqref{Neumann def}, and \ref{Word:item12} we obtain
\begin{align*}
\int_{(0,1)} \int_{\Omega}(-\Delta)_{p}^{s} u(x) dx d\mu(s) & =\int_{(0,1)} c_{N, s, p} \int_{\Omega} \int_{\mathbb{R}^{N}} \frac{|u(x)-u(y)|^{p-2}(u(x)-u(y))}{|x-y|^{N+p s}} dy dx d\mu(s) \\
& =\int_{(0,1)} c_{N, s, p} \int_{\Omega} \int_{\mathbb{R}^{N} \backslash \Omega} \frac{|u(x)-u(y)|^{p-2}(u(x)-u(y))}{|x-y|^{N+p s}} dy d x d\mu(s) \\
& =\int_{(0,1)} c_{N, s, p} \int_{\mathbb{R}^{N} \backslash \Omega} \int_{\Omega} \frac{|u(x)-u(y)|^{p-2}(u(x)-u(y))}{|x-y|^{N+ps}} dx dy d\mu(s) \\
& =-\int_{(0,1)} \int_{\mathbb{R}^{N} \backslash \Omega} \mathscr{N}_{s, p} u(y) dy d\mu(s),
\end{align*}
concluding the proof.
\end{proof}

\begin{lem}
Let $u, v: \mathbb{R}^{N} \rightarrow \mathbb{R}$ be functions such that $u, v \in C^{2}(\Omega)$. Suppose that the following assumptions hold:

\setword{$(i)$}{Word:item2}
\begin{equation*}
(-\Delta)_{p}^{s} u v \in L^{1}(\Omega \times(0,1)) ;
\end{equation*}

\setword{$(ii)$}{Word:item1}  The function 
\begin{align*}
\begin{split}
    \mathcal{Q} \times(0,1) \ni(x, y, s) \mapsto c_{N, s, p} \frac{|u(x)-u(y)|^{p-2}(u(x)-u(y))(v(x)-v(y))}{|x-y|^{N+ps}} \\
\end{split}
\end{align*}
belongs to $L^{1}(\mathcal{Q} \times(0,1))$;

\setword{$(iii)$}{Word:item3} The function
\begin{align*}
\begin{split}
    \left(\mathbb{R}^{N} \backslash \Omega\right) \times \Omega \times(0,1) \ni(x, y, s) \mapsto c_{N, s, p} \frac{|u(x)-u(y)|^{p-2}(u(x)-u(y)) v(x)}{|x-y|^{N+p s}}
\end{split}
\end{align*}
belongs to $L^{1}\left(\left(\mathbb{R}^{N} \backslash \Omega\right) \times \Omega \times(0,1)\right)$.

Then, the following integration by parts formula holds:
\begin{align}\label{int bp tech 4}
\begin{split}
    \frac{1}{2} \int_{(0,1)} c_{N, s, p} & \iint_{Q} \frac{|u(x)-u(y)|^{p-2}(u(x)-u(y))(v(x)-v(y))}{|x-y|^{N+p s}} dx dy d\mu(s) \\
& =\int_{(0,1)} \int_{\Omega} v(x)(-\Delta)_{p}^{s} u(x) dx d\mu(s)+\int_{(0,1)} \int_{\mathbb{R}^{N} \backslash \Omega} v(x) \mathscr{N}_{s,p} u(x) d x d \mu(s).
\end{split}
\end{align}
\end{lem}

\begin{proof}
Note that assumptions \ref{Word:item2}, \ref{Word:item1}, and \ref{Word:item3} guarantee that the integrals in \eqref{int bp tech 4} are finite.  

Observe that we have
\begin{align*}
\int_{\Omega} \int_{\Omega} & \frac{|u(x)-u(y)|^{p-2}(u(x)-u(y))(v(x)-v(y))}{|x-y|^{N+p s}} dx dy \\
&  \quad \quad \quad \quad \quad \quad \quad \quad =\int_{\Omega} \int_{\Omega} v(x) \frac{|u(x)-u(y)|^{p-2}(u(x)-u(y))}{|x-y|^{N+p s}}dx dy\\
&  \quad \quad \quad \quad \quad \quad \quad \quad -\int_{\Omega} \int_{\Omega} v(y) \frac{|u(x)-u(y)|^{p-2}(u(x)-u(y))}{|x-y|^{N+p s}} dx d y \\
&  \quad \quad \quad \quad \quad \quad \quad \quad =2 \int_{\Omega} \int_{\Omega} v(x) \frac{|u(x)-u(y)|^{p-2}(u(x)-u(y))}{|x-y|^{N+p s}}dx dy.
\end{align*}
Similarly,
\begin{align*}
\int_{\Omega} \int_{\mathbb{R}^{N} \backslash \Omega} & \frac{|u(x)-u(y)|^{p-2}(u(x)-u(y))(v(x)-v(y))}{|x-y|^{N+p s}} dx dy \\
& \quad \quad \quad \quad \quad \quad \quad =\int_{\Omega} \int_{\mathbb{R}^{N} \backslash \Omega} v(x) \frac{|u(x)-u(y)|^{p-2}(u(x)-u(y))}{|x-y|^{N+p s}} dx dy\\
& \quad \quad \quad \quad \quad \quad \quad -\int_{\Omega} \int_{\mathbb{R}^{N} \backslash \Omega} v(y) \frac{|u(x)-u(y)|^{p-2}(u(x)-u(y))}{|x-y|^{N+p s}} dx dy \\
& \quad \quad \quad \quad \quad \quad \quad =\int_{\Omega} \int_{\mathbb{R}^{N} \backslash \Omega} v(x) \frac{|u(x)-u(y)|^{p-2}(u(x)-u(y))}{|x-y|^{N+p s}} dx dy\\
& \quad \quad \quad \quad \quad \quad \quad +\int_{\mathbb{R}^{N} \backslash \Omega} \int_{\Omega} v(x) \frac{|u(x)-u(y)|^{p-2}(u(x)-u(y))}{|x-y|^{N+p s}} dx dy
\end{align*}
and 
\begin{align*}
\int_{\mathbb{R}^{N} \backslash \Omega} \int_{\Omega} & \frac{|u(x)-u(y)|^{p-2}(u(x)-u(y))(v(x)-v(y))}{|x-y|^{N+p s}} dx dy \\
& \quad \quad \quad \quad \quad \quad \quad =\int_{\mathbb{R}^{N} \backslash \Omega} \int_{\Omega} v(x) \frac{|u(x)-u(y)|^{p-2}(u(x)-u(y))}{|x-y|^{N+p s}} dx dy\\
& \quad \quad \quad \quad \quad \quad \quad -\int_{\mathbb{R}^{N} \backslash \Omega} \int_{\Omega} v(y) \frac{|u(x)-u(y)|^{p-2}(u(x)-u(y))}{|x-y|^{N+p s}} dx dy \\
& \quad \quad \quad \quad \quad \quad \quad =\int_{\mathbb{R}^{N} \backslash \Omega} \int_{\Omega} v(x) \frac{|u(x)-u(y)|^{p-2}(u(x)-u(y))}{|x-y|^{N+p s}} dx dy\\
& \quad \quad \quad \quad \quad \quad \quad +\int_{\Omega} \int_{\mathbb{R}^{N} \backslash \Omega} v(x) \frac{|u(x)-u(y)|^{p-2}(u(x)-u(y))}{|x-y|^{N+p s}} dx dy.
\end{align*}
Then, using the above identities and the decomposition $\mathcal{Q}=(\Omega \times \Omega) \cup\left(\Omega \times \mathbb{R}^{N} \backslash \Omega\right) \cup\left(\mathbb{R}^{N} \backslash \Omega \times \Omega\right)$, we obtain
\begin{align*}
& \frac{1}{2} \iint_{Q} \frac{|u(x)-u(y)|^{p-2}(u(x)-u(y))(v(x)-v(y))}{|x-y|^{N+p s}} dx dy \\
& \quad \quad \quad \quad \quad \quad \quad =\int_{\Omega} \int_{\mathbb{R}^{N}} v(x) \frac{|u(x)-u(y)|^{p-2}(u(x)-u(y))}{|x-y|^{N+p s}} dx dy\\
& \quad \quad \quad \quad \quad \quad \quad +\int_{\mathbb{R}^{N} \backslash \Omega} \int_{\Omega} v(x) \frac{|u(x)-u(y)|^{p-2}(u(x)-u(y))}{|x-y|^{N+p s}} dx dy.
\end{align*}
Consequently, using \eqref{p frac laplacian} and \eqref{Neumann def}, we get
\begin{align*}
\int_{(0,1)} \frac{c_{N, s, p}}{2} & \iint_{\mathcal{Q}} \frac{|u(x)-u(y)|^{p-2}(u(x)-u(y))(v(x)-v(y))}{|x-y|^{N+p s}} dx dy d\mu(s) \\
& =\int_{(0,1)} \int_{\Omega} v(x) c_{N, s, p} \int_{\mathbb{R}^{N}} \frac{|u(x)-u(y)|^{p-2}(u(x)-u(y))}{|x-y|^{N+p s}} dy dx d\mu(s) \\
& +\int_{(0,1)} \int_{\mathbb{R}^{N} \backslash \Omega} v(x) c_{N, s, p} \int_{\Omega} \frac{|u(x)-u(y)|^{p-2}(u(x)-u(y))}{|x-y|^{N+p s}} dy dx d\mu(s) \\
& =\int_{(0,1)} \int_{\Omega} v(x)(-\Delta)_p^{s} u(x) d x d \mu(s)+\int_{(0,1)} \int_{\mathbb{R}^{N} \backslash \Omega} v(x) \mathscr{N}_{s, p} u(x) dx d\mu(s),
\end{align*}
as claimed.
\end{proof}

\subsection{Auxiliary results}

We state here some results that play an important role in establishing the existence of solutions to problem \eqref{main problem 1}. Below, we provide a brief review of linking sets and Alexander-Spanier cohomology, referring to \cite{DL-2007, Frigon-1999}.

\begin{definition}\label{link def}
    Let $D, S, A, B$ be four subsets of a metric space $X$ with $S \subseteq D$ and $B \subseteq A$. We say that ($D, S$) links ($A, B$), if $S \cap A=B \cap D=\emptyset$ and, for every deformation $\eta: D \times[0,1] \rightarrow X \backslash B$ satisfying $\eta(S \times[0,1]) \cap A=\emptyset$, it holds that $\eta(D \times\{1\}) \cap A \neq \emptyset$.
\end{definition} 

We will use the following minimax-type theorem to establish the existence of critical points.
\begin{thm}\cite[Theorem 2.2]{DL-2007}\label{link main thm}
    Let $X$ be a complete Finsler manifold of class $C^{1}$ and let $f: X \rightarrow \mathbb{R}$ be a function of class $C^{1}$. Let $D, S, A, B$ be four subsets of $X$, with $S \subseteq D$ and $B \subseteq A$, such that ($D, S$) links ($A, B$) and such that
\begin{equation*}
    \sup _{S} f<\inf _{A} f, \quad \sup _{D} f<\inf _{B} f
\end{equation*}
(we agree that $\sup \emptyset=-\infty$ and $\inf \emptyset=+\infty$). Define
\begin{equation*}
    c=\inf _{\eta \in \mathcal{N}} \sup f(\eta(D \times\{1\})),
\end{equation*}
where $\mathcal{N}$ is the set of deformations $\eta: D \times[0,1] \rightarrow X \backslash B$ with $\eta(S \times[0,1]) \cap A=\emptyset$. Then we have
\begin{equation*}
    \inf _{A} f \leq c \leq \sup _{D} f.
\end{equation*}
Moreover, if $f$ satisfies $(PS)_{c}$, then $c$ is a critical value of $f$.
\end{thm}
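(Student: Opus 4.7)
The plan is to prove the two inequalities for $c$ directly from the linking assumption, and then obtain the criticality of $c$ by a standard argument by contradiction using a quantitative deformation lemma compatible with the pair $(A,B)$. Throughout I would use the class $\mathcal{N}$ of admissible deformations as defined in the statement, together with the fact that the definition of $(D,S)$ linking $(A,B)$ enforces both $S\cap A=\emptyset$ and $B\cap D=\emptyset$.

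First I would establish $c\leq \sup_D f$ by testing the minimax with the trivial deformation $\eta_0(x,t):=x$. Since $B\cap D=\emptyset$ one has $\eta_0(D\times[0,1])=D\subseteq X\setminus B$, and since $S\cap A=\emptyset$ one has $\eta_0(S\times[0,1])=S$, which is disjoint from $A$. Hence $\eta_0\in\mathcal{N}$ and $\sup f(\eta_0(D\times\{1\}))=\sup_D f$. For $\inf_A f\leq c$, let $\eta\in\mathcal{N}$ be arbitrary: the linking hypothesis gives $\eta(D\times\{1\})\cap A\neq\emptyset$, so $\sup f(\eta(D\times\{1\}))\geq\inf_A f$, and taking the infimum over $\eta$ yields the bound.

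For the criticality, I would argue by contradiction. Assume $c$ is a regular value; since $\sup_D f<\inf_B f$ and $\sup_S f<\inf_A f\leq c\leq \sup_D f$, I can fix $\bar\varepsilon>0$ small enough so that
\[
\sup_S f<c-\bar\varepsilon\quad\text{and}\quad c+\bar\varepsilon<\inf_B f.
\]
The regularity of $c$ together with the $(PS)_c$ condition allows me to invoke the quantitative deformation lemma on the Finsler manifold $X$, producing $\varepsilon\in(0,\bar\varepsilon)$ and a continuous map $\theta:X\times[0,1]\to X$ with $\theta(\cdot,0)=\operatorname{id}$, with $t\mapsto f(\theta(x,t))$ nonincreasing, with $\theta(x,t)=x$ whenever $|f(x)-c|\geq\bar\varepsilon$, and with $\theta(\{f\leq c+\varepsilon\},1)\subseteq\{f\leq c-\varepsilon\}$. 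By definition of $c$, there exists $\eta\in\mathcal{N}$ with $\sup f(\eta(D\times\{1\}))<c+\varepsilon$. Concatenating in the standard way,
\[
\tilde\eta(x,t):=\begin{cases}\eta(x,2t),& t\in[0,1/2],\\ \theta(\eta(x,1),2t-1),& t\in[1/2,1],\end{cases}
\]
I would show $\tilde\eta\in\mathcal{N}$ and $\sup f(\tilde\eta(D\times\{1\}))\leq c-\varepsilon$, contradicting the definition of $c$.

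The routine verifications are that $\tilde\eta$ stays in $X\setminus B$ (the values of $f$ along $\tilde\eta$ never exceed $\sup_D f\vee(c+\varepsilon)<\inf_B f$, so the flow cannot enter $B$) and that the second--half image satisfies the terminal bound (by the push--down property of $\theta$). The main obstacle is verifying $\tilde\eta(S\times[0,1])\cap A=\emptyset$: on $[0,1/2]$ this is immediate from $\eta\in\mathcal{N}$, but on $[1/2,1]$ one must control $\theta(\eta(s,1),\cdot)$, which a priori might cross $A$ since $\eta(s,1)$ may lie in the slab $\{|f-c|<\bar\varepsilon\}$. The standard remedy is to refine the construction of $\theta$: using a locally Lipschitz pseudo--gradient field for $f$, multiply it by a cutoff that vanishes on an open neighborhood $U$ of the closed set $\eta(S\times[0,1])$ chosen so that $\overline U\cap A=\emptyset$ (possible because $A$ is closed and disjoint from $\eta(S\times[0,1])$). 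The resulting flow fixes all points of $\eta(S\times\{1\})$ while still producing the required descent on $\eta(D\times\{1\})\setminus U$, and one retains $\sup f(\tilde\eta(D\times\{1\}))\leq c-\varepsilon$. Once this modified $\theta$ is in hand, the contradiction with the definition of $c$ is immediate, completing the proof.
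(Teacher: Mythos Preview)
The paper does not supply its own proof of this theorem; it is quoted from \cite[Theorem~2.2]{DL-2007} and invoked as a black box, so there is no in-paper argument to compare against.

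Regarding your sketch itself: the inequalities $\inf_A f\le c\le\sup_D f$ are handled correctly. The gap lies in the criticality step, precisely where you anticipate it, and your proposed remedy does not close it. Cutting off the pseudo-gradient on a neighbourhood $U$ of $\eta(S\times[0,1])$ with $\overline U\cap A=\emptyset$ fails on two counts. First, nothing in the hypotheses says $A$ is closed (nor that $\eta(S\times[0,1])$ is closed or even compact), so the separating neighbourhood $U$ need not exist. Second, and more damaging, once the flow is frozen on $U$ you no longer push down the points of $\eta(D\times\{1\})\cap U$; since there is no reason those points already satisfy $f\le c-\varepsilon$ (they may sit anywhere in $\{f<c+\varepsilon\}$), your assertion that ``one retains $\sup f(\tilde\eta(D\times\{1\}))\le c-\varepsilon$'' is unjustified, and the contradiction with the definition of $c$ collapses. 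Making the two requirements---keeping the $S$-trajectory off $A$ for all times and achieving uniform descent on the full $D$-slice---simultaneously compatible is the genuine content of the argument, and it is not a routine localisation of the standard first deformation lemma.
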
 

The following is the cohomological counterpart of Definition \ref{link def}.
\begin{definition}\label{link def }
    Consider a field $\mathbb{K}$, a non-negative integer $m$, and subsets $D, S, A, B$ of a metric space $X$ such that $S \subseteq D$ and $B \subseteq A$. If the intersections $S \cap A$ and $B \cap D$ are empty, and the restriction homomorphism $H^{m}(X \backslash B, X \backslash A ; \mathbb{K}) \rightarrow H^{m}(D, S ; \mathbb{K})$ is not identically zero, then we say that ($D, S$) links ($A, B$) cohomologically in dimension $m$ over $\mathbb{K}$. In the special case where $B = \emptyset$, we say that ($D, S$) links $A$ cohomologically in dimension $m$ over $\mathbb{K}$.
\end{definition}

\begin{prop}\label{link index 3}\cite[Proposition 2.4]{DL-2007}
    If ($D, S$) links ($A, B$) cohomologically (in some dimension), then ($D, S$) links ($A, B$).
\end{prop}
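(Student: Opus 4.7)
The plan is to prove the contrapositive: assuming that $(D, S)$ does not link $(A, B)$ topologically, I will show that the restriction homomorphism $H^m(X \setminus B, X \setminus A; \mathbb{K}) \to H^m(D, S; \mathbb{K})$ is the zero map for every $m \ge 0$. If $S \cap A \neq \emptyset$ or $B \cap D \neq \emptyset$, cohomological linking already fails by definition, so I may assume these disjointness conditions hold. The failure of topological linking then furnishes a deformation $\eta: D \times [0,1] \to X \setminus B$, starting at the inclusion $D \hookrightarrow X$, with $\eta(S \times [0,1]) \cap A = \emptyset$ and $\eta_1(D) \cap A = \emptyset$, where $\eta_1 := \eta(\cdot, 1)$.

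The first step is to reinterpret $\eta$ as a homotopy of pairs between the inclusion $i: (D, S) \hookrightarrow (X \setminus B, X \setminus A)$ and $\eta_1: (D, S) \to (X \setminus B, X \setminus A)$. This requires checking that, for every $t \in [0,1]$, $\eta(\cdot, t)$ sends $D$ into $X \setminus B$ (true since $\eta$ takes values in $X \setminus B$) and $S$ into $X \setminus A$ (true by the condition $\eta(S \times [0,1]) \cap A = \emptyset$). By the homotopy invariance of Alexander-Spanier cohomology for continuous maps of topological pairs, I then obtain the identity $i^* = \eta_1^*$ between the induced homomorphisms $H^m(X \setminus B, X \setminus A; \mathbb{K}) \to H^m(D, S; \mathbb{K})$.

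The second step is to factor $\eta_1^*$ through a zero group. Since $\eta_1(D) \cap A = \emptyset$, the map $\eta_1$ admits the factorization
\[
(D, S) \xrightarrow{\tilde{\eta}_1} (X \setminus A, X \setminus A) \xrightarrow{j} (X \setminus B, X \setminus A),
\]
where $j$ is the inclusion of pairs. Consequently $\eta_1^* = \tilde{\eta}_1^* \circ j^*$, and because $H^m(X \setminus A, X \setminus A; \mathbb{K}) = 0$ for every $m \ge 0$ (as follows at once from the long exact sequence of the pair $(X \setminus A, X \setminus A)$), we get $\eta_1^* = 0$. Combined with the previous step this gives $i^* = 0$, contradicting cohomological linking in dimension $m$.

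The main obstacle is making sure that homotopy invariance is applicable in this generality. Since $D, S, A, B$ are arbitrary subsets of the metric space $X$, not necessarily CW-complexes or ANRs, singular cohomology would not yield a well-behaved theory and the corresponding pair inclusions need not be cofibrations. I would therefore rely explicitly on the fact that Alexander-Spanier cohomology satisfies homotopy invariance for continuous maps between arbitrary pairs of topological spaces, which is precisely the reason it is the cohomology theory adopted in this framework.
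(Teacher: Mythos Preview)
Your argument is correct and is essentially the standard proof of this fact (as given, e.g., in the cited reference \cite{DL-2007}). Note, however, that the paper itself does not supply a proof of this proposition: it is stated as a quotation of \cite[Proposition~2.4]{DL-2007} and used as a black box. So there is no ``paper's own proof'' to compare against; you have simply filled in the details behind the citation, and done so accurately. Your care about the choice of cohomology theory is also appropriate, since the framework of \cite{DL-2007} is explicitly built on Alexander--Spanier cohomology for precisely the reason you indicate.
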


\begin{definition}
    Let $X$ be a real normed space. A subset $A$ of $X$ is said to be symmetric if $-u \in A$ whenever $u \in A$. Moreover, $A$ is said to be a cone if $t u \in A$ whenever $u \in A$ and $t>0$.
\end{definition} 

\begin{thm}\label{linking geometry thm}\cite[Theorem 2.8]{DL-2007}
Let $X$ be a real normed space and let $\mathcal{C}_{-},$ $ \mathcal{C}_{+}$ be two cones such that $\mathcal{C}_{-} \cap \mathcal{C}_{+}=\{0\}$, $\mathcal{C}_{+}$ is closed in $X$,  and such that ($X, \mathcal{C}_{-} \backslash\{0\}$) links $\mathcal{C}_{+}$ cohomologically in dimension $m$ over $\mathbb{K}$. Let $r_{-}, r_{+}>0$ and let
\begin{equation*}
    \begin{array}{ll}
D_{-}=\left\{u \in \mathcal{C}_{-}:\|u\| \leq r_{-}\right\}, & S_{-}=\left\{u \in \mathcal{C}_{-}:\|u\|=r_{-}\right\}, \\
D_{+}=\left\{u \in \mathcal{C}_{+}:\|u\| \leq r_{+}\right\}, & S_{+}=\left\{u \in \mathcal{C}_{+}:\|u\|=r_{+}\right\}.
\end{array}
\end{equation*}
Then the following facts hold:

(a) ( $D_{-}, S_{-}$) links $\mathcal{C}_{+}$ cohomologically in dimension $m$ over $\mathbb{K}$;

(b) ( $D_{-}, S_{-}$) links ( $D_{+}, S_{+}$) cohomologically in dimension $m$ over $\mathbb{K}$;

Moreover, let $e \in X$ with $-e \notin \mathcal{C}_{-}$, let
\begin{align*}
& Q=\left\{u+t e: u \in \mathcal{C}_{-}, t \geq 0,\|u+t e\| \leq r_{-}\right\}, \\
& H=\left\{u+t e: u \in \mathcal{C}_{-}, t \geq 0,\|u+t e\|=r_{-}\right\},
\end{align*}
and assume that $r_{-}>r_{+}$. Then the following facts hold:

(c) $\left(Q, D_{-} \cup H\right)$ links $S_{+}$ cohomologically in dimension $m+1$ over $\mathbb{K}$;

(d) $D_{-} \cup H$ links $\left(D_{+}, S_{+}\right)$ cohomologically in dimension $m$ over $\mathbb{K}$.
\end{thm}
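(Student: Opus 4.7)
The plan is to deduce each of (a)–(d) from the cohomological linking hypothesis on $(X,\mathcal{C}_-\setminus\{0\})$ and $\mathcal{C}_+$ by constructing explicit deformations of pairs and exploiting the functoriality of Alexander–Spanier cohomology together with long exact sequences of triples. The structural input is that $\mathcal{C}_\pm$ are cones meeting only at the origin (with $\mathcal{C}_+$ closed), so one can build radial deformations that contract $\mathcal{C}_-$ onto the disk $D_-$ and, dually, expand $D_+$ to $\mathcal{C}_+$, all while keeping the complement of $\mathcal{C}_+$ inside itself.

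For item (a), I would construct a deformation of pairs $\eta_t:(X,\mathcal{C}_-\setminus\{0\})\to (X,\mathcal{C}_-\setminus\{0\})$ with $\eta_0=\mathrm{id}$, $\eta_1(X)\subseteq D_-$ and $\eta_1(\mathcal{C}_-\setminus\{0\})\subseteq S_-$, obtained by a radial contraction along $\mathcal{C}_-$ via a continuous Minkowski-type functional attached to $\mathcal{C}_-$. Functoriality then makes the restriction $H^m(X,\mathcal{C}_-\setminus\{0\})\to H^m(D_-,S_-)$ injective, and composing with the nonzero map $H^m(X,X\setminus\mathcal{C}_+)\to H^m(X,\mathcal{C}_-\setminus\{0\})$ given by the hypothesis yields the desired non-vanishing of the restriction $H^m(X,X\setminus\mathcal{C}_+)\to H^m(D_-,S_-)$. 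For (b), I would dualise: using the closedness of $\mathcal{C}_+$, a radial expansion produces a homotopy equivalence of pairs $(X\setminus S_+,X\setminus D_+)\simeq (X,X\setminus\mathcal{C}_+)$, and combining this with (a) gives the conclusion.

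For (c) and (d) the new ingredient is the vector $e$ with $-e\notin\mathcal{C}_-$: this condition makes the map $(u,t)\mapsto u+te$ injective and presents $Q$ as a truncated topological cone with base $D_-$ and apex on the $e$-axis, so that $D_-\cup H$ is its total topological boundary. I would then use the suspension-type isomorphism $H^m(D_-,S_-)\cong H^{m+1}(Q,D_-\cup H)$ coming from this cone structure (equivalently, the coboundary in the long exact sequence of the triple associated with $Q$, $H$ and a neighbourhood of the apex) to raise the cohomological dimension by one, carrying the non-trivial class of (a) to a non-trivial class in $H^{m+1}(Q,D_-\cup H)$. The hypothesis $r_->r_+$ is used here to keep the whole deformation away from $S_+$, so that the restriction from $H^{m+1}(X,X\setminus S_+)$ remains nonzero, yielding (c). Item (d) is obtained by combining (c) with the expansion argument of (b), now applied between $S_+$ and $(D_+,S_+)$.

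The main obstacle I anticipate is the construction of the radial retractions in the abstract normed setting: the Minkowski functionals attached to $\mathcal{C}_\pm$ must be extended continuously to all of $X$ without ever mapping the complement of $\mathcal{C}_+$ into $\mathcal{C}_+$, which becomes delicate near the origin where the two cones meet. A second subtlety is the suspension identification in (c): the pair $(Q,D_-\cup H)$ is not literally a topological cone on $(D_-,S_-)$, only deformation equivalent to one, so verifying that the coboundary of the long exact sequence faithfully transports the linking class requires carefully decomposing $H$ into its lateral part (the cone on $S_-$) and its outer cap, and using $r_->r_+$ in an essential way to ensure that the whole construction stays clear of $S_+$.
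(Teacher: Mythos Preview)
The paper does not give its own proof of this statement: Theorem~\ref{linking geometry thm} is quoted verbatim from \cite[Theorem~2.8]{DL-2007} and is used as a black box in Section~4, so there is nothing in the present paper to compare your proposal against. Your outline (radial retractions built from Minkowski-type gauges, homotopy equivalences of pairs, and a suspension/coboundary argument to pass from dimension $m$ to $m+1$) is in fact the strategy carried out in the original Degiovanni--Lancellotti paper, and the two technical difficulties you flag---continuous extension of the gauge near the origin and the identification of $(Q,D_-\cup H)$ with a cone pair only up to deformation---are exactly the points that require care there; but none of that is reproduced here.
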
 

Let $X$ be a real normed space and $A$ a symmetric subset of $X \backslash\{0\}$. We denote by $i(A)$ the $\mathbb{Z}_{2}$-index of $A$, as defined in \cite{FR-1977, FR-1978}. For the definition and properties of this index, we refer to Fadell and Rabinowitz \cite{FR-1978}. 

We will also use the following result.

\begin{cor}\label{link index}\cite[Corollary 2.9]{DL-2007}
 Let $X$ be a real normed space and let $\mathcal{C}_{-},$ $\mathcal{C}_{+}$ be two symmetric cones in $X$ such that $\mathcal{C}_{+}$ is closed in $X,$ $\mathcal{C}_{-} \cap \mathcal{C}_{+}=\{0\}$ and such that
\begin{equation*}
    i\left(\mathcal{C}_{-} \backslash\{0\}\right)=i\left(X \backslash \mathcal{C}_{+}\right)<\infty.
\end{equation*}
Then the assertion (a)-(d) of Theorem \ref{linking geometry thm} hold for $m=i\left(\mathcal{C}_{-} \backslash\{0\}\right)$ and $\mathbb{K}=\mathbb{Z}_{2}$.
\end{cor}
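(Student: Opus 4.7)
The plan is to reduce the corollary to a single cohomological linking claim that can then be fed directly into Theorem \ref{linking geometry thm}. Specifically, I would show that under the hypothesis $m := i(\mathcal{C}_- \setminus \{0\}) = i(X \setminus \mathcal{C}_+) < \infty$, the pair $(X, \mathcal{C}_- \setminus \{0\})$ links $\mathcal{C}_+$ cohomologically in dimension $m$ over $\mathbb{Z}_2$, in the sense of Definition \ref{link def } with $D = X$ and $B = \emptyset$. Once this is in hand, assertions (a)-(d) follow by invoking Theorem \ref{linking geometry thm} verbatim, with no further work.

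The set-theoretic hypotheses $(\mathcal{C}_- \setminus \{0\}) \cap \mathcal{C}_+ = \emptyset$ and $\emptyset \cap X = \emptyset$ are immediate from $\mathcal{C}_- \cap \mathcal{C}_+ = \{0\}$, so the substantive step is to verify that the restriction homomorphism
\[
\rho : H^m(X, X \setminus \mathcal{C}_+; \mathbb{Z}_2) \longrightarrow H^m(X, \mathcal{C}_- \setminus \{0\}; \mathbb{Z}_2)
\]
is not identically zero. Since $X$ is a normed space and hence contractible, the long exact sequences of the two pairs collapse to natural isomorphisms
\[
H^m(X, \mathcal{C}_- \setminus \{0\}; \mathbb{Z}_2) \cong \widetilde{H}^{m-1}(\mathcal{C}_- \setminus \{0\}; \mathbb{Z}_2), \qquad H^m(X, X \setminus \mathcal{C}_+; \mathbb{Z}_2) \cong \widetilde{H}^{m-1}(X \setminus \mathcal{C}_+; \mathbb{Z}_2),
\]
under which $\rho$ is conjugate to the pullback $\iota^*$ by the inclusion $\iota : \mathcal{C}_- \setminus \{0\} \hookrightarrow X \setminus \mathcal{C}_+$ (well-defined precisely because $\mathcal{C}_- \cap \mathcal{C}_+ = \{0\}$). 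Hence the task is reduced to showing that $\iota^*$ is nonzero in degree $m-1$.

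The decisive input is the cohomological characterization of the Fadell-Rabinowitz $\mathbb{Z}_2$-index from \cite{FR-1977, FR-1978}: for a symmetric subset $A \subset X \setminus \{0\}$, the index $i(A)$ is the supremum of those integers $n$ for which the pullback of the appropriate power of the generator of $H^*(\mathbb{RP}^\infty; \mathbb{Z}_2)$ along the classifying map $A/\mathbb{Z}_2 \to \mathbb{RP}^\infty$ remains nonzero. Both $\mathcal{C}_- \setminus \{0\}$ and $X \setminus \mathcal{C}_+$ are symmetric and carry free $\mathbb{Z}_2$-actions; since $\iota$ is $\mathbb{Z}_2$-equivariant, naturality of the classifying map together with the assumed equality $i(\mathcal{C}_- \setminus \{0\}) = i(X \setminus \mathcal{C}_+) = m$ prevents $\iota^*$ from annihilating the class realising the index of $X \setminus \mathcal{C}_+$: were $\iota^*$ zero in degree $m-1$, that class would die upon restriction and the index of $\mathcal{C}_- \setminus \{0\}$ would be strictly smaller than $m$, a contradiction.

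The main obstacle is precisely this last step, namely translating the numerical equality of Fadell-Rabinowitz indices into the nonvanishing of a specific restriction in Alexander-Spanier cohomology. This requires invoking the equivariant-cohomology definition of the index through the Borel construction, and exploiting the finiteness hypothesis $m < \infty$ to rule out a drop in cohomological dimension under the inclusion $\iota$. Everything else in the proof is formal: after the cohomological linking has been established, Theorem \ref{linking geometry thm} delivers conclusions (a)-(d) immediately with $m = i(\mathcal{C}_- \setminus \{0\})$ and $\mathbb{K} = \mathbb{Z}_2$.
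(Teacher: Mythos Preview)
The paper does not supply a proof of this corollary; it is quoted verbatim from \cite[Corollary 2.9]{DL-2007} as a preliminary tool, with no argument given. There is therefore nothing in the present paper to compare your proposal against.

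For what it is worth, your strategy is the natural one and matches the route in the original source: reduce everything to the single hypothesis of Theorem~\ref{linking geometry thm}, namely that $(X,\mathcal{C}_-\setminus\{0\})$ links $\mathcal{C}_+$ cohomologically in dimension $m$ over $\mathbb{Z}_2$, and then read off (a)--(d). Your identification of the main obstacle is accurate. One point worth tightening: the class that witnesses $i(X\setminus\mathcal{C}_+)=m$ lives in $H^{m-1}$ of the $\mathbb{Z}_2$-quotient $(X\setminus\mathcal{C}_+)/\mathbb{Z}_2$, not in the ordinary cohomology of $X\setminus\mathcal{C}_+$ itself, so the naturality argument should be run at the level of the induced inclusion of quotients and then related back to the non-equivariant restriction $\iota^*$ (via the free action and the covering $A\to A/\mathbb{Z}_2$). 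As written, your passage between equivariant and ordinary cohomology is a little implicit, but the outline is sound.
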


We end this section with the following mountain pass theorem.
\begin{thm}\label{MP thm}\cite[Theorem 6.1]{Struwe} 
Let $X$ be a Banach space with the norm $\|\cdot\|$, and let $F \in C^{1}(X, \mathbb{R})$. Assume that $F$ satisfies the Palais-Smale condition. Suppose that the following assumptions hold:

- $F(0)=0$;

- there exist $\rho,$ $\alpha>0$ such that if $\|u\|=\rho$, then $F(u) \geq \alpha$;

- there exists a function $u_{1} \in X$ such that $\left\|u_{1}\right\| \geq \rho$ and $F\left(u_{1}\right)<\alpha$.

Set
\begin{equation*}
    \Gamma:=\left\{\gamma \in C([0,1] ; X): \gamma(0)=0, \gamma(1)=u_{1}\right\}.
\end{equation*}
Then,
\begin{equation*}
    \beta:=\inf _{\gamma \in \Gamma} \sup _{u \in \gamma} F(u) \geq \alpha
\end{equation*}
is a critical value of $F$.
\end{thm}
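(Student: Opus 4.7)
The plan is to follow the classical variational argument of Ambrosetti and Rabinowitz, combining the mountain-pass geometry of $F$ with a quantitative deformation lemma powered by the Palais--Smale condition.

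First I would verify that $\beta \geq \alpha$. For any $\gamma \in \Gamma$, the continuous map $t \mapsto \|\gamma(t)\|$ vanishes at $t=0$ and is at least $\rho$ at $t=1$, so by the intermediate value theorem there exists $t^{*} \in [0,1]$ with $\|\gamma(t^{*})\|=\rho$. The second hypothesis yields $F(\gamma(t^{*})) \geq \alpha$, hence $\sup_{u \in \gamma} F(u) \geq \alpha$, and taking the infimum over $\Gamma$ gives $\beta \geq \alpha$.

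Next, to show that $\beta$ is a critical value, I would argue by contradiction. Assume no critical point at level $\beta$ exists. Using the Palais--Smale condition, a standard quantitative argument then produces constants $\varepsilon_{0},\delta>0$ such that $\|F'(u)\|_{X^{*}} \geq \delta$ whenever $|F(u)-\beta| \leq 2\varepsilon_{0}$. Shrinking $\varepsilon_{0}$ if needed, I can also require $\varepsilon_{0}<\min\{\alpha,\,\alpha-F(u_{1})\}/2$; together with $F(0)=0$, this keeps both endpoints $0$ and $u_{1}$ outside the critical strip $\{|F-\beta|\leq \varepsilon_{0}\}$. Using a locally Lipschitz pseudo-gradient vector field for $F$, truncated by a bump function supported in $\{|F-\beta|\leq 2\varepsilon_{0}\}$, I would construct a continuous flow $\eta:[0,1]\times X \to X$ with $\eta(0,\cdot)=\mathrm{id}_{X}$, $\eta(t,\cdot)$ equal to the identity outside $\{|F-\beta| \leq 2\varepsilon_{0}\}$, and such that $\eta\bigl(1,\{F \leq \beta+\varepsilon\}\bigr) \subseteq \{F \leq \beta-\varepsilon\}$ for some $\varepsilon \in (0,\varepsilon_{0})$.

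Finally, I would pick a quasi-minimizing path $\gamma \in \Gamma$ with $\sup_{u \in \gamma} F(u) < \beta+\varepsilon$ and consider the deformed path $\widetilde{\gamma}(t):=\eta(1,\gamma(t))$. Since $0$ and $u_{1}$ lie outside the critical strip, they are fixed by $\eta(1,\cdot)$, so $\widetilde{\gamma} \in \Gamma$; yet $\sup_{u \in \widetilde{\gamma}} F(u) \leq \beta-\varepsilon$, contradicting the definition of $\beta$. The main obstacle, as is standard for this kind of result, is the careful construction of the pseudo-gradient flow: one must truncate the vector field so that the deformation is globally well defined and leaves the endpoints of $\gamma$ fixed, while still producing a genuine decrease of $F$ along the flow on the critical strip. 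The strict inequality $F(u_{1})<\alpha$ (rather than $F(u_{1})\leq \alpha$) is exactly what creates the safety margin needed to justify that $0$ and $u_{1}$ are not moved.
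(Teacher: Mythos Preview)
The paper does not supply its own proof of this theorem: it is stated as a citation to \cite[Theorem 6.1]{Struwe} and used as a black box. Your argument is the classical Ambrosetti--Rabinowitz proof (intermediate-value argument for $\beta\ge\alpha$, then contradiction via a quantitative deformation lemma built from a truncated pseudo-gradient flow), which is precisely the approach one finds in Struwe's textbook, so there is nothing to compare. One small wording slip: you say the choice $\varepsilon_{0}<\min\{\alpha,\alpha-F(u_{1})\}/2$ keeps $0$ and $u_{1}$ outside the strip $\{|F-\beta|\le\varepsilon_{0}\}$, but what you actually need (and what your bound does give) is that they lie outside the larger strip $\{|F-\beta|\le 2\varepsilon_{0}\}$ where the flow may move points; the computation is fine, only the sentence should read $2\varepsilon_{0}$.
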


\section{The eigenvalue problem: Proof of Theorem \ref{Main result 1}}

In this section, we study the eigenvalue problem associated with the operator $\mathfrak{L}_{\alpha, \mu, p}+I$, where $I(u):=|u|^{p-2}u$. Precisely, for $\lambda\in \mathbb{R}$, we consider the problem
\begin{equation}\label{Eig problem}
\left\{\begin{array}{l}
\mathfrak{L}_{\alpha, \mu, p}(u)+|u|^{p-2}u=\lambda |u|^{p-2}u \quad \text { in } \Omega,  \\
\text { with homogeneous }(\alpha, \mu, p) \text {-Neumann conditions.}
\end{array}\right.
\end{equation}
Since we deal with homogeneous $(\alpha, \mu, p)$-Neumann boundary conditions (see Definition \ref{boundary cond}), we will assume from now on that $h \equiv 0$ and $g \equiv 0$.

Recall that a function $u \in \mathcal{W}_{\alpha, \mu, p}(\Omega)$ is called a weak solution of \eqref{Eig problem} if it satisfies
\begin{align*}
\alpha \int_{\Omega} |\nabla u|^{p-2} \nabla u \nabla v d x&+\int_{(0,1)} \frac{c_{N, s, p}}{2} \iint_{\mathcal{Q}} \frac{\mathcal{A}(u(x)-u(y))(v(x)-v(y))}{|x-y|^{N+p s}} d x d y d \mu(s)\\
&+\int_{\Omega} |u|^{p-2}u v d x=\lambda \int_{\Omega} |u|^{p-2}u v d x
\end{align*}
for all $v \in \mathcal{W}_{\alpha, \mu, p}(\Omega)$. In the above formula, for $1<p<\infty$, we have used the function $\mathcal{A}: \mathbb{R} \rightarrow \mathbb{R}$, defined by
\begin{equation*}
    \mathcal{A}(t)= \begin{cases}|t|^{p-2} t & \text { if } t \neq 0, \\ 0 & \text { if } t=0.\end{cases}
\end{equation*}

If \eqref{Eig problem} admits a weak solution $u \in \mathcal{W}_{\alpha, \mu, p}(\Omega)$, then we say that $\lambda$ is an eigenvalue of \eqref{Eig problem} with homogeneous $(\alpha, \mu, p)$-Neumann boundary conditions, and that $u$ is an associated $\lambda$-eigenfunction. Observe that $\lambda_1=1$ is the first eigenvalue of \eqref{Eig problem}, and all $\lambda_1$-eigenfunctions are constant functions.

Now, for any $u \in \mathcal{W}_{\alpha,\mu,p}(\Omega)$, we define the functional $\mathcal{R}:\mathcal{W}_{\alpha,\mu,p}(\Omega) \rightarrow \mathbb{R}$ by
\begin{equation*}
    \mathcal{R}(u)=\|u\|_{L^{p}(\Omega)}^{p}.
\end{equation*}
In addition, we define the set
\begin{equation*}
    \mathcal{S}=\{u \in \mathcal{W}_{\alpha,\mu,p}(\Omega): \mathcal{R}(u)=1\}.
\end{equation*}
Since $\mathcal{R} \in C^1\left(\mathcal{W}_{\alpha,\mu,p}(\Omega), \mathbb{R}\right)$, it follows that $\mathcal{S}$ is a $C^1$-Finsler manifold.

Next, we define the functional $\mathcal{P}:\mathcal{W}_{\alpha,\mu,p}(\Omega) \rightarrow \mathbb{R}$ by
\begin{align}\label{energy functional J}
\begin{split}
    \mathcal{P}(u) & :=\alpha\|\nabla u\|_{L^p(\Omega)}^p+\frac{1}{2}\int_{(0,1)}[u]_{s, p}^{p}d\mu(s)+\|u\|_{L^{p}(\Omega)}^{p}. 
\end{split}
\end{align}
Note that $\mathcal{P} \in C^1\left(\mathcal{W}_{\alpha,\mu,p}(\Omega), \mathbb{R}\right).$ Denoting by $\widetilde{\mathcal{P}}$ the restriction of $\mathcal{P}$ to $\mathcal{S}$, we conclude that $\lambda>0$ is a critical value of $\widetilde{\mathcal{P}}$ if and only if it is an eigenvalue of \eqref{Eig problem} (see, e.g., \cite[Lemma 4.3]{PAO-2010}).

We will construct a sequence $\{\lambda_{k}\}$ of eigenvalues for problem \eqref{Eig problem}, using the cohomological index theory due to Fadell and Rabinowitz. For this, we denote by $\mathcal{F}$ the family of all nonempty, closed, symmetric subsets of $\mathcal{S}$ and we set for all $k \in \mathbb{N}$
\begin{equation*}
    \mathcal{F}_{k}=\{A \in \mathcal{F}: i(A) \geq k\},
\end{equation*}
where $i(A)$ is the $\mathbb{Z}_{2}$-cohomological index of a set $A$. 

Now we define the sequence $\{\lambda_{k}\}$, setting 
\begin{equation}\label{eigen seq}
\lambda_{k}=\inf _{A \in \mathcal{F}_{k}} \sup _{u \in A} \mathcal{P}(u). 
\end{equation}
Since $\mathcal{F}_{k+1} \subseteq \mathcal{F}_{k}$ for all $k \in \mathbb{N}$, we assert that the sequence $\{\lambda_{k}\}$ is non-decreasing. 

Next, we show that, for every $k \in \mathbb{N}$, the $\lambda_{k}$ defined in \eqref{eigen seq} is an eigenvalue of \eqref{Eig problem}. 

Recall that the functional $\mathcal{P}$ given by \eqref{energy functional J} is said to satisfy the Palais-Smale condition $(PS)_c$ at the level $c\in\mathbb{R}$ if every sequence $\{u_n\} \subset \mathcal{W}_{\alpha, \mu, p}(\Omega)$ such that
\begin{equation*}
\mathcal{P}(u_n)\to c
\quad\text{and}\quad
\mathcal{P}^{\prime}(u_n) \to 0 \,\, \text{in}\,\,\mathcal{W}_{\alpha, \mu, p}(\Omega)^*
\quad\text{as } n\to+\infty,
\end{equation*}
admits a subsequence which converges strongly in $\mathcal{W}_{\alpha, \mu, p}(\Omega)$. 

\begin{prop}\label{PS condition}
    The functional $\widetilde{\mathcal{P}}$ satisfies the Palais-Smale condition at any level $c \in \mathbb{R}$.
\end{prop}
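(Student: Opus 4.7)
\noindent\emph{Proof plan.} The plan is to adapt the classical constrained Palais--Smale argument for $p$-Laplacian--type problems to the superposition operator $\mathfrak{L}_{\alpha,\mu,p}$. The key simplifying observations are that, since the $(\alpha,\mu,p)$-Neumann data are homogeneous (so $g\equiv h\equiv 0$), the functional $\mathcal{P}$ in \eqref{energy functional J} equals $\|\cdot\|_{\alpha,\mu,p}^{p}$, and both $\mathcal{P}$ and $\mathcal{R}$ are $p$-homogeneous.

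Let $\{u_n\}\subset\mathcal{S}$ be a PS sequence for $\widetilde{\mathcal{P}}$ at level $c$. By the Lagrange multiplier rule on the $C^{1}$-Finsler manifold $\mathcal{S}$ there exist $\lambda_n\in\mathbb{R}$ with $\mathcal{P}'(u_n)-\lambda_n\mathcal{R}'(u_n)\to 0$ in $\mathcal{W}_{\alpha,\mu,p}(\Omega)^{*}$. Testing this identity against $u_n$ and using $p$-homogeneity (so $\langle\mathcal{P}'(u_n),u_n\rangle=p\,\mathcal{P}(u_n)$ and $\langle\mathcal{R}'(u_n),u_n\rangle=p\,\mathcal{R}(u_n)=p$) gives $\lambda_n=\mathcal{P}(u_n)+o(1)\to c$, and $\mathcal{P}(u_n)\to c$ forces $\{u_n\}$ to be bounded in $\mathcal{W}_{\alpha,\mu,p}(\Omega)$. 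By reflexivity (Propositions \ref{reflex mu p} and \ref{Reflex of full}) and the compactness in Corollary \ref{embed full}, up to a subsequence $u_n\rightharpoonup u$ weakly in $\mathcal{W}_{\alpha,\mu,p}(\Omega)$ and $u_n\to u$ strongly in $L^{p}(\Omega)$; in particular $u\in\mathcal{S}$.

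Next, I test $\mathcal{P}'(u_n)-\lambda_n\mathcal{R}'(u_n)=o(1)$ against $v=u_n-u$. The zero-order integrals $\int_{\Omega}|u_n|^{p-2}u_n(u_n-u)dx$ and $\lambda_n\int_{\Omega}|u_n|^{p-2}u_n(u_n-u)dx$ vanish in the limit by strong $L^{p}$ convergence and the boundedness of $\{\lambda_n\}$. Moreover, since $|\nabla u|^{p-2}\nabla u\in L^{p'}(\Omega)$ and $c_{N,s,p}^{1/p'}\mathcal{A}(u(x)-u(y))/|x-y|^{(N+ps)/p'}$ lies in $L^{p'}(\mathcal{Q}\times(0,1);dx\,dy\,d\mu)$ (by Hölder and the finiteness of $\|u\|_{\alpha,\mu,p}$), the associated linear functionals belong to $\mathcal{W}_{\alpha,\mu,p}(\Omega)^{*}$ and annihilate $u_n-u$ in the limit by weak convergence. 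Subtracting the two resulting identities yields
\begin{multline*}
\alpha\int_{\Omega}\bigl(|\nabla u_n|^{p-2}\nabla u_n-|\nabla u|^{p-2}\nabla u\bigr)\cdot\nabla(u_n-u)\,dx\\
+\frac{1}{2}\int_{(0,1)}c_{N,s,p}\iint_{\mathcal{Q}}\frac{\bigl[\mathcal{A}(u_n(x)-u_n(y))-\mathcal{A}(u(x)-u(y))\bigr]\bigl[(u_n-u)(x)-(u_n-u)(y)\bigr]}{|x-y|^{N+ps}}\,dx\,dy\,d\mu(s)\longrightarrow 0.
\end{multline*}

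Both integrands are pointwise nonnegative by the classical Simon/Lindqvist monotonicity inequality for $\xi\mapsto|\xi|^{p-2}\xi$, so each of the two terms converges to zero separately. For $p\geq 2$ the local part immediately gives $\|\nabla(u_n-u)\|_{L^{p}(\Omega)}\to 0$, while the nonlocal part, applied slicewise in $s$ and then integrated by dominated convergence against $\mu$ (with the uniform Gagliardo bound on $\{u_n\}$ from Step~1 as majorant), gives $\int_{(0,1)}[u_n-u]_{s,p}^{p}\,d\mu(s)\to 0$. For $1<p<2$ the same conclusion follows after one additional Hölder step, exploiting the uniform boundedness of $\{u_n\}$ in $\mathcal{W}_{\alpha,\mu,p}(\Omega)$. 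Combined with $u_n\to u$ in $L^{p}(\Omega)$, this yields $\|u_n-u\|_{\alpha,\mu,p}\to 0$, as required. The main obstacle I expect is precisely the nonlocal monotonicity step: the Simon inequality has to be applied pointwise in $s$, and in the subquadratic range the subsequent Hölder argument must produce constants uniform in $s\in\operatorname{supp}(\mu)$ so that integration against $\mu$ (via dominated convergence or Fatou) preserves the estimate; the remaining steps are direct transpositions of the standard $p$-Laplacian theory.
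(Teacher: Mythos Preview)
Your proof is correct and follows essentially the same route as the paper: Lagrange multipliers for the constrained problem, boundedness from $\mathcal{P}(u_n)=\|u_n\|_{\alpha,\mu,p}^{p}$, reflexivity plus the compact embedding of Corollary~\ref{embed full}, testing against $u_n-u$, and Simon's inequalities (with the extra H\"older step for $1<p<2$) to upgrade to strong convergence. The only cosmetic differences are that the paper keeps the zero-order $L^{p}$ term inside the Simon analysis as a third summand rather than discarding it via strong $L^{p}$ convergence, and that your appeal to dominated convergence in $s$ is unnecessary---integrating the pointwise Simon inequality directly against $d\mu$ already yields $\int_{(0,1)}[u_n-u]_{s,p}^{p}\,d\mu(s)\le C_p\cdot(\text{nonlocal monotonicity term})\to 0$.
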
 

\begin{proof}
    Consider sequences $\{u_{n}\} \subset \mathcal{S}$ and $\{\mu_{n}\} \subset \mathbb{R}$ such that $\mathcal{P}\left(u_{n}\right) \rightarrow c$ and 
    \begin{equation}\label{convergence ps2}
        \mathcal{P}^{\prime}\left(u_{n}\right)-\mu_{n} \mathcal{R}^{\prime}\left(u_{n}\right) \rightarrow 0 \text{ in } \mathcal{W}_{\alpha, \mu, p}(\Omega)^*
    \end{equation}
    as $n \rightarrow \infty$. Note that we  have
\begin{equation*}
    \left\|u_{n}\right\|_{\alpha, \mu, p}^{p}=\mathcal{P}\left(u_{n}\right)\rightarrow c, \text{ as } n\to \infty.
\end{equation*}
Hence, the sequence $\left\{u_n\right\}$ is bounded in $\mathcal{W}_{\alpha, \mu, p}(\Omega)$. Since $\mathcal{W}_{\alpha, \mu, p}(\Omega)$ is a reflexive Banach space (see Proposition \ref{Reflex of full}), there exists a subsequence, still denoted by $\left\{u_n\right\}$ and a function $u\in \mathcal{W}_{\alpha, \mu, p}(\Omega)$, such that $u_n \rightharpoonup u$ weakly in $\mathcal{W}_{\alpha, \mu, p}(\Omega)$. Then, by Corollary \ref{embed full}, we conclude that $u_n \rightarrow u$, up to a subsequence, in $L^q(\Omega)$ for $1 \leq q<p_{s_{\sharp}}^*$. In particular, we have $u \in \mathcal{S}$. Moreover, testing \eqref{convergence ps2} with $v:=u_n$, we have
\begin{equation*}
    \mu_{n}=\mathcal{P}\left(u_{n}\right)+o(1) \rightarrow c,
\end{equation*}
so the sequence $\{\mu_{n}\}$ is also bounded.

Now, using Hölder’s inequality together with \eqref{convergence ps2}, we obtain
\begin{align}\label{oper L conv 1}
\begin{split}
    |\left\langle \mathcal{P}^{\prime}(u_n), u_{n}-u\right\rangle|  & =\left|\mu_{n}\left\langle \mathcal{R}^{\prime}(u_n), u_{n}-u\right\rangle\right| +o(1)\\
& \leq p\left|\mu_{n}\right|\left\|u_{n}-u\right\|_{L^{p}(\Omega)}\|u_{n}\|_{L^{p}(\Omega)}^{p-1}+o(1) .
\end{split}
\end{align}
Thus, taking limit as $n\to \infty$ in \eqref{oper L conv 1}, we get
\begin{equation*}
    \lim_{n\to \infty} \left\langle \mathcal{P}^{\prime}(u_n), u_n-u\right\rangle =0.
\end{equation*}
Moreover, $u_n \rightharpoonup u$ weakly in $\mathcal{W}_{\alpha, \mu, p}(\Omega)$ implies
\begin{equation*}
    \lim_{n\to \infty}\left\langle\mathcal{P}^{\prime}(u), u_n-u\right\rangle = 0.
\end{equation*}
Consequently, 
\begin{equation}\label{ps main conv}
    \lim_{n\to \infty}\left\langle \mathcal{P}^{\prime}\left(u_n\right)-\mathcal{P}^{\prime}(u), u_n-u\right\rangle =0.
\end{equation}
We set
\begin{equation*}
    \left\langle \mathcal{P}^{\prime}\left(u_n\right)-\mathcal{P}^{\prime}(u), u_n-u\right\rangle:=p(I_1+I_2+I_3),
\end{equation*}
where
\begin{equation*}
    \begin{gathered}
I_1=\alpha \int_{\Omega}\left(\left|\nabla u_n\right|^{p-2} \nabla u_n-|\nabla u|^{p-2} \nabla u\right)\left(\nabla u_n-\nabla u\right) d x, \\
I_2=\int_{\Omega}\left(|u_n|^{p-2}  u_n-|u|^{p-2} u\right)\left( u_n- u\right) d x,
\end{gathered}
\end{equation*}
and
\begin{equation*}
    \begin{gathered}
I_3=\int_{(0,1)}\frac{c_{N,s,p}}{2}\iint_{\mathbb{R}^{2N}} \frac{\mathcal{A}\left(u_n(x)-u_n(y)\right)\left(u_n(x)-u_n(y)-(u(x)-u(y))\right.}{|x-y|^{N+s p}} d x d y d\mu(s)\\
-\int_{(0,1)}\frac{c_{N,s,p}}{2}\iint_{\mathbb{R}^{2N}} \frac{\mathcal{A}(u(x)-u(y))\left(u_n(x)-u_n(y)-(u(x)-u(y))\right.}{|x-y|^{N+s p}} d x d y d\mu(s).
\end{gathered}
\end{equation*}

Recall the following Simon's inequalities, see \cite{Simon-1978}. For all $p \in(1, +\infty)$ there exists $C_p>0$, depending only on $p$, such that
\begin{equation}\label{Simon ineq}
    |\xi-\eta|^p \leq  \begin{cases}C_p \mathcal{F}_p(\xi, \eta), & p \geq 2, \\ C_p \mathcal{F}_p(\xi, \eta)^{p / 2} \cdot\left(|\xi|^p+|\eta|^p\right)^{(2-p) / 2}, & 1<p < 2,\end{cases}
\end{equation}
where
\begin{equation*}
    \mathcal{F}_p(\xi, \eta)=\left(|\xi|^{p-2} \xi-|\eta|^{p-2} \eta\right)(\xi-\eta), \quad \forall \xi, \eta \in \mathbb{R}.
\end{equation*}

For $p \geq 2$, taking $\xi=\nabla u_n,$ $\eta=\nabla u$ in $I_1$ and using inequality \eqref{Simon ineq}, we get
\begin{equation}\label{pg2 i1}
    \alpha\int_{\Omega}\left|\nabla\left(u_n-u\right)\right|^p d x \leq C_p I_1.
\end{equation}
Similarly, taking $\xi= u_n,$ $\eta=u$ in $I_2$, gives 
\begin{equation}\label{pg2 i2}
    \int_{\Omega}\left|u_n-u\right|^p d x \leq C_p I_2.
\end{equation}
Applying again \eqref{Simon ineq} with $\xi=u_n(x)-u_n(y),$ $\eta=u(x)-u(y)$ in $I_3$, we have
\begin{equation}\label{pg2 i3}
   \int_{(0,1)}\frac{c_{N,s,p}}{2} \iint_{\mathbb{R}^{2N}}  \frac{\left|u_n(x)-u_n(y)-(u(x)-u(y))\right|^p}{|x-y|^{N+s p}} d x d yd\mu(s) \leq C_p I_3 .
\end{equation}
Combining \eqref{pg2 i1}, \eqref{pg2 i2} and \eqref{pg2 i3}, we obtain
\begin{equation}\label{pg2 norm}
    \left\|u_n-u\right\|_{\alpha,\mu,p}^p \leq C_p\left(I_1+I_2+I_3\right), \quad \text{ for } p\geq 2.
\end{equation}

For the case $1<p<2$, taking $\xi=\nabla u_n,$ $\eta=\nabla u$ in \eqref{Simon ineq} and substituting in $I_1$, we get
\begin{align}\label{pl2 i1}
    &\alpha \int_{\Omega}\left|\nabla\left(u_n-u\right)\right|^p d x \\
    &\quad \quad \leq \alpha C_p \int_{\Omega}\left(\left|\nabla u_n\right|^{p-2} \nabla u_n-|\nabla u|^{p-2} \nabla u\right)^{\frac{p}{2}}\left(\nabla u_n-\nabla u\right)^{\frac{p}{2}}\left(\left|\nabla u_n\right|^p+|\nabla u|^p\right)^{\frac{2-p}{2}} d x \nonumber.
\end{align}
Applying Hölder's inequality to the right-hand side of \eqref{pl2 i1} with the exponents $\frac{2}{p}$ and $\frac{2}{2-p}$, we obtain
\begin{align}\label{pl2 i11}
\begin{split}
    \alpha \int_{\Omega}\left|\nabla\left(u_n-u\right)\right|^p d x & \leq  C_p\left(\alpha\int_{\Omega} \left(\left|\nabla u_n\right|^{p-2} \nabla u_n-|\nabla u|^{p-2} \nabla u\right)\left(\nabla u_n-\nabla u\right) d x\right)^{\frac{p}{2}} \\
& \times\left(\alpha\int_{\Omega}\left(\left|\nabla u_n\right|^p+|\nabla u|^p\right) d x\right)^{\frac{2-p}{2}} \\
&\leq C_pM^{\frac{2-p}{2}}\left(\alpha\int_{\Omega} \left(\left|\nabla u_n\right|^{p-2}\nabla u_n-|\nabla u|^{p-2}\nabla u\right)\left(\nabla u_n-\nabla u\right) d x\right)^{\frac{p}{2}} \\
&= C I_1^{\frac{p}{2}}, 
\end{split}
\end{align}
where $C=C_pM^{\frac{2-p}{2}}$ and $\alpha\left(\left\|\nabla u_n\right\|_p^p+\left\|\nabla u\right\|_p^p \right)\leq M$ for some nonnegative constant $M$.

Proceeding as above, we also obtain
\begin{equation}\label{pl2 i2}
   \int_{\Omega}\left|\left(u_n-u\right)\right|^p d x \leq C I_2^{\frac{p}{2}}
\end{equation}
and
\begin{equation}\label{pl2 i3}
    \int_{(0,1)}\frac{c_{N,s,p}}{2}\iint_{\mathbb{R}^{2N}}\frac{\left|\left(u_n-u\right)(x)-\left(u_n-u\right)(y)\right|^p}{|x-y|^{N+s p}} d x d yd\mu(s) \leq C I_3^{\frac{p}{2}}.
\end{equation}
Now, combining \eqref{pl2 i11}, \eqref{pl2 i2} and \eqref{pl2 i3}, using the convexity of the map $t \mapsto t^{\frac{2}{p}}$, we get
\begin{equation}\label{pl2 norm}
    \left\|u_n-u\right\|_{\alpha,\mu, p}^2 \leq C\left(I_1+I_2+I_3\right), \quad \text{ for } 1<p<2.
\end{equation}
  
Finally, using \eqref{ps main conv}, \eqref{pg2 norm} and \eqref{pl2 norm}, we arrive at
\begin{equation*}
    \lim_{n\to \infty}\left\|u_n-u\right\|_{\alpha, \mu, p}^p = 0.
\end{equation*}
This concludes the proof.
\end{proof} 

\begin{proof}[Proof of Theorem \ref{Main result 1}]
In order to prove that, for any $k \in \mathbb{N}, \lambda_k$ defined in \eqref{eigen seq} is an eigenvalue of \eqref{Eig problem}, it suffices to show that $\lambda_k$ is a critical value of $\widetilde{\mathcal{P}}$. Arguing by contradiction, we assume that $\lambda_{k}$ is a regular value of $\widetilde{\mathcal{P}}$. By Proposition \ref{PS condition}, we know that $\widetilde{\mathcal{P}}$ satisfies the Palais-Smale condition.  Therefore, applying \cite[Theorem 2.5]{Bonnet-1993}, we conclude that there exist a real $\varepsilon>0$ and an odd homeomorphism $\eta: \mathcal{S} \rightarrow \mathcal{S}$ such that 
\begin{equation}\label{tech for lambda}
    \mathcal{P}(\eta(u)) \leq \lambda_{k}-\varepsilon
\end{equation}
for all $u \in \mathcal{S}$ with $\mathcal{P}(u) \leq \lambda_{k}+\varepsilon$.

By the definition of $\lambda_{k}$ in \eqref{eigen seq}, we can choose a set $A \in \mathcal{F}_{k}$ such that $\sup _{A} \mathcal{P}<\lambda_{k}+\varepsilon$. We set $B=\eta(A)$. Since $\eta$ is an odd homeomorphism, the set $B$ is closed, symmetric, and nonempty, and hence, $B \in \mathcal{F}$. Moreover, since $i(B)=i(\eta(A))\geq i(A) \geq k$, we have $B \in \mathcal{F}_k$. On the other hand, by \eqref{tech for lambda}, we have $\sup _{B} \mathcal{P} \leq \lambda_{k}-\varepsilon$, which contradicts \eqref{eigen seq}. Therefore, every $\lambda_{k}$ is an eigenvalue of \eqref{Eig problem}.

Finally, since $i(\mathcal{S})=\infty$ and $\sup_\mathcal{S} \mathcal{P}=\infty$, we immediately conclude that $\lambda_k \rightarrow \infty$.
\end{proof}

\begin{remark}
    All the results of this section remain valid if we consider the eigenvalue problem:
\begin{equation*}
\left\{\begin{array}{l}
\mathfrak{L}_{\alpha, \mu, p}(u) =\lambda |u|^{p-2}u \quad \text { in } \Omega,  \\
\text { with homogeneous }(\alpha, \mu, p) \text {-Neumann conditions.}
\end{array}\right.
\end{equation*}
In this case, taking
  \begin{equation}\label{ad after com}
      \mathcal{P}(u) :=\alpha\|\nabla u\|_{L^p(\Omega)}^p+\frac{1}{2}\int_{(0,1)}[u]_{s, p}^{p}d\mu(s)
  \end{equation} 
in \eqref{energy functional J}, one obtains a diverging sequence of eigenvalues with
    \begin{equation*}
        0=\widetilde{\lambda}_1\leq \widetilde{\lambda}_2 \leq ... \to \infty.
    \end{equation*}
The proof follows the same arguments as in the previous case. The only difference is that the functional $\mathcal{P}$ defined in \eqref{ad after com} does not control the full norm of $\mathcal{W}_{\alpha,\mu,p}(\Omega)$ due to the absence of the $L^p$-term. To overcome this difficulty, we consider the operator $\mathcal{L}: \mathcal{W}_{\alpha, \mu, p}(\Omega) \rightarrow \mathcal{W}_{\alpha, \mu, p}(\Omega)^*$ defined by
\begin{align}\label{ad after com 1}
\begin{split}
     \langle \mathcal{L}(u), v\rangle & :=\int_{\Omega}|u|^{p-2} u v d x+\alpha \int_{\Omega}|\nabla u|^{p-2} \nabla u \nabla v d x  \\
& \quad +\int_{(0,1)}\frac{c_{N,s,p}}{2}\iint_{\mathcal{Q}} \frac{\mathcal{A}(u(x)-u(y))(v(x)-v(y))}{|x-y|^{N+p s}} d x d yd\mu(s)
\end{split}
\end{align}
for all $u, v \in \mathcal{W}_{\alpha, \mu, p}(\Omega)$. Then, using \eqref{convergence ps2}, \eqref{ad after com 1}, and the weak convergence in $\mathcal{W}_{\alpha,\mu,p}(\Omega)$, we obtain 
\begin{equation*}
    \lim_{n\to \infty}\left\langle \mathcal{L}\left(u_n\right)-\mathcal{L}(u), u_n-u\right\rangle =0
\end{equation*}
instead of \eqref{ps main conv}. The rest of the proof proceeds exactly as before.
\end{remark}

\section{Existence results: Proofs of Theorems \ref{main result 02} and \ref{main result 03}}

In this section, we prove existence results for problem \eqref{main problem 1}, depending on the position of the parameter $\lambda$ with respect to the first eigenvalue of the eigenvalue problem \eqref{Eig problem}. 

The definition of a weak solution to problem \eqref{main problem 1} is stated as follows: 
\begin{definition}
We say that $u \in \mathcal{W}_{\alpha, \mu, p}(\Omega)$ is a weak solution of problem \eqref{main problem 1} if, for any $v \in \mathcal{W}_{\alpha, \mu, p}(\Omega)$,
\begin{align*}
\alpha \int_{\Omega} |\nabla u|^{p-2} \nabla u \nabla v d x&+\int_{(0,1)} \frac{c_{N, s, p}}{2} \iint_{\mathcal{Q}} \frac{\mathcal{A}(u(x)-u(y))(v(x)-v(y))}{|x-y|^{N+p s}} d x d y d \mu(s)\\
&+\int_{\Omega} |u|^{p-2}u v d x=\lambda \int_{\Omega} |u|^{p-2}u v d x+\int_{\Omega} f(x, u) v d x. 
\end{align*}
\end{definition}

Let us consider the sequence of eigenvalues $\{\lambda_{k}\}$ of problem \eqref{Eig problem} defined in \eqref{eigen seq}. For each $\lambda_{k}$, we define the cones
\begin{align}\label{cone -}
\begin{split}
    \mathcal{C}_{k}^{-}:=\Bigg\{u \in \mathcal{W}_{\alpha, \mu, p}(\Omega): \|u\|_{\alpha,\mu,p}^p \leq \lambda_{k} \int_{\Omega}|u|^{p} d x\Bigg\} 
\end{split}
\end{align}
and
\begin{align}\label{cone +}
\begin{split}
    \mathcal{C}_{k}^{+}:=\Bigg\{u \in \mathcal{W}_{\alpha, \mu, p}(\Omega): \|u\|_{\alpha,\mu,p}^p \geq \lambda_{k+1} \int_{\Omega}|u|^{p} d x\Bigg\} .
\end{split}
\end{align}

The following result is the analogue of \cite[Theorem 3.2]{DL-2007} in our setting, and its proof follows the same steps. See, for example, the proof of \cite[Lemma 2.15]{LY-2013}.
\begin{thm}\label{link index 2}
    Let $k \geq 1$ be such that $\lambda_{k}<\lambda_{k+1}$, then we have
\begin{equation*}
    i\left(\mathcal{C}_{k}^{-} \backslash\{0\}\right)=i\left(\mathcal{W}_{\alpha,\mu,p} \backslash \mathcal{C}_{k}^{+}\right)=k.
\end{equation*}
\end{thm}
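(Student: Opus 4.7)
The plan is to reduce both indices to ones computed on the $L^p$--sphere
$$\mathcal{S}=\{u\in\mathcal{W}_{\alpha,\mu,p}(\Omega):\|u\|_{L^p(\Omega)}^p=1\},$$
and then to sandwich each of them by $k$ from above and from below, using on one side the variational definition of $\lambda_k,\lambda_{k+1}$ and on the other side the Palais--Smale condition from Proposition \ref{PS condition}. Since $g\equiv h\equiv 0$, the functional $\mathcal{P}$ coincides with $\|\cdot\|_{\alpha,\mu,p}^p$, so on $\mathcal{S}$ the sets of interest become $\mathcal{C}_k^-\cap\mathcal{S}=\{\mathcal{P}\le\lambda_k\}$ and $\mathcal{S}\setminus\mathcal{C}_k^+=\{\mathcal{P}<\lambda_{k+1}\}$.

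\emph{Step 1 (reduction to the sphere).} On $\mathcal{C}_k^-\setminus\{0\}$ one necessarily has $\int_\Omega|u|^p\,dx>0$, for otherwise $\|u\|_{\alpha,\mu,p}=0$ and $u\equiv 0$ by Proposition \ref{reflex mu p}; the same holds on $\mathcal{W}_{\alpha,\mu,p}(\Omega)\setminus\mathcal{C}_k^+$. Hence the radial retraction $\pi(u):=u/\|u\|_{L^p(\Omega)}$ is well-defined, continuous and odd on these sets, and the positive homogeneity of $\mathcal{C}_k^\pm$ turns it into an odd surjection onto $\mathcal{C}_k^-\cap\mathcal{S}$ and onto $\mathcal{S}\setminus\mathcal{C}_k^+$ respectively. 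Combined with the obvious inclusions, the monotonicity of $i$ under odd continuous maps yields
$$i(\mathcal{C}_k^-\setminus\{0\})=i(\mathcal{C}_k^-\cap\mathcal{S}),\qquad i(\mathcal{W}_{\alpha,\mu,p}(\Omega)\setminus\mathcal{C}_k^+)=i(\mathcal{S}\setminus\mathcal{C}_k^+).$$

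\emph{Step 2 (upper bound).} For any $c<\lambda_{k+1}$ the set $K_c:=\{u\in\mathcal{S}:\mathcal{P}(u)\le c\}$ is closed and symmetric in $\mathcal{S}$; if $i(K_c)\ge k+1$, then $K_c\in\mathcal{F}_{k+1}$ and the definition of $\lambda_{k+1}$ forces $\lambda_{k+1}\le\sup_{K_c}\mathcal{P}\le c<\lambda_{k+1}$, a contradiction. Choosing $c=\lambda_k$ gives $i(\mathcal{C}_k^-\cap\mathcal{S})\le k$, and letting $c\uparrow\lambda_{k+1}$ (together with the continuity of $i$ under monotone unions of closed sets) gives $i(\mathcal{S}\setminus\mathcal{C}_k^+)\le k$.

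\emph{Step 3 (lower bound and main obstacle).} By definition of $\lambda_k$, for each $\varepsilon>0$ there is $A_\varepsilon\in\mathcal{F}_k$ with $\sup_{A_\varepsilon}\mathcal{P}<\lambda_k+\varepsilon$. The Palais--Smale condition from Proposition \ref{PS condition}, combined with the gap $\lambda_k<\lambda_{k+1}$, ensures that for $\varepsilon$ small enough $\widetilde{\mathcal{P}}$ has no critical values in $(\lambda_k,\lambda_k+\varepsilon]$; the equivariant deformation lemma (in the form of \cite[Theorem 2.5]{Bonnet-1993}, already used in the proof of Theorem \ref{Main result 1}) then supplies an odd homeomorphism $\eta:\mathcal{S}\to\mathcal{S}$ such that $\eta(\{\mathcal{P}\le\lambda_k+\varepsilon\}\cap\mathcal{S})\subseteq\{\mathcal{P}\le\lambda_k\}\cap\mathcal{S}$. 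Since $\eta$ preserves the index, $\eta(A_\varepsilon)\in\mathcal{F}_k$ lies in $\mathcal{C}_k^-\cap\mathcal{S}$, so $i(\mathcal{C}_k^-\cap\mathcal{S})\ge k$, and the inclusion $\mathcal{C}_k^-\cap\mathcal{S}\subseteq\mathcal{S}\setminus\mathcal{C}_k^+$ transfers this bound to the second set. This equivariant deformation step is the point where the nonlinear nature of $\mathfrak{L}_{\alpha,\mu,p}$ really bites: unlike in the Hilbertian framework of \cite[Theorem 3.2]{DL-2007}, one cannot rely on orthogonal projections and has to build the flow from a pseudo-gradient field on the $C^1$-Finsler manifold $\mathcal{S}$, using the $(PS)_c$ compactness of Proposition \ref{PS condition} to exclude critical values immediately above $\lambda_k$.
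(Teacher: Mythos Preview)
The paper itself does not prove this theorem: it simply records that the statement is the analogue of \cite[Theorem~3.2]{DL-2007} and points to \cite[Lemma~2.15]{LY-2013} for the details. Your outline follows precisely the route of those references --- radially project the cones onto $\mathcal S$, identify $\mathcal C_k^-\cap\mathcal S$ and $\mathcal S\setminus\mathcal C_k^+$ with the closed sublevel $\{\mathcal P\le\lambda_k\}$ and the open sublevel $\{\mathcal P<\lambda_{k+1}\}$ of $\widetilde{\mathcal P}$, and then squeeze each index between $k$ and $k$ using the minimax definition of the $\lambda_j$ on one side and an equivariant deformation on the other --- so in substance you are supplying exactly what the paper defers to the literature.

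There is, however, a genuine gap in Step~3. You assert that ``(PS) together with the gap $\lambda_k<\lambda_{k+1}$ ensures that for $\varepsilon$ small enough $\widetilde{\mathcal P}$ has no critical values in $(\lambda_k,\lambda_k+\varepsilon]$.'' This does not follow: (PS) only makes the set of critical values (i.e.\ eigenvalues) closed, while the hypothesis $\lambda_k<\lambda_{k+1}$ constrains only the \emph{variational} eigenvalues defined in~\eqref{eigen seq}. For $p\neq 2$ it is a well-known open question whether the sequence $\{\lambda_j\}$ exhausts the spectrum of~\eqref{Eig problem}, so nothing you have said rules out further eigenvalues lying in $(\lambda_k,\lambda_{k+1})$ and accumulating at $\lambda_k$ from above; in that scenario Bonnet's lemma in the clean form you quote is simply not available. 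The cited references avoid this by using the equivariant deformation lemma in the version that keeps track of the compact critical set at level $\lambda_k$, combined with the subadditivity and neighborhood-continuity properties of the Fadell--Rabinowitz index, rather than by assuming a critical-value-free interval above $\lambda_k$. A smaller caveat applies to the end of Step~2: ``continuity of $i$ under monotone unions of closed sets'' is not one of the listed axioms of the cohomological index and also needs justification; in \cite{DL-2007,LY-2013} the upper bound on $i(\mathcal S\setminus\mathcal C_k^+)$ is obtained through the same deformation-plus-index machinery rather than by invoking such a continuity principle directly.
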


\subsection{Existence of a linking solution}

Throughout this subsection, we assume that $\lambda\geq\lambda_1=1$. Since the sequence of eigenvalues is divergent, we can assume that there exists $k \geq 1$ such that 
\begin{equation}\label{Link cond on lamb}
    \lambda_{k} \leq  \lambda<\lambda_{k+1}.
\end{equation}

We define the functional $\mathcal{I}: \mathcal{W}_{\alpha, \mu, p}(\Omega) \rightarrow \mathbb{R}$, associated to problem \eqref{main problem 1}, as 
\begin{align}\label{functional MP}
\begin{split}
    \mathcal{I}(u)&:=\frac{\alpha}{p} \int_{\Omega}|\nabla u|^{p} d x+\int_{(0,1)} \frac{c_{N, s, p}}{2p} \iint_{\mathcal{Q}} \frac{|u(x)-u(y)|^{p}}{|x-y|^{N+p s}} d x d y d \mu(s)+\frac{1}{p} \int_{\Omega}|u|^{p} d x\\
&-\frac{\lambda}{p} \int_{\Omega}|u|^{p} d x-\int_{\Omega} F(x, u) d x .
\end{split}
\end{align}
Note that the functional $\mathcal{I}$ is of class $C^{1}$ and for all $u, v \in \mathcal{W}_{\alpha, \mu, p}(\Omega)$ we have 
\begin{align}\label{functional MP deriv}
\begin{split}
    \left\langle \mathcal{I}^{\prime}(u), v\right\rangle&=  \alpha \int_{\Omega} |\nabla u|^{p-2}\nabla u \nabla v d x\\
    &+\int_{(0,1)} \frac{c_{N, s, p}}{2} \iint_{\mathcal{Q}} \frac{\mathcal{A}(u(x)-u(y))(v(x)-v(y))}{|x-y|^{N+p s}} d x d y d \mu(s) \\
&+ \int_{\Omega} |u|^{p-2}u v d x-\lambda \int_{\Omega} |u|^{p-2}u v d x-\int_{\Omega} f(x, u) v d x .
\end{split}
\end{align}
Moreover, the critical points of $\mathcal{I}$ are weak solutions of problem \eqref{main problem 1}.

\begin{prop}\label{PS for linking}
   Let $\lambda$ be as in \eqref{Link cond on lamb}, and suppose that $f$ fulfills the conditions \ref{Word:f1}-\ref{Word:f4}. Then the functional $\mathcal{I},$ defined in \eqref{functional MP}, satisfies the $(P S)_{c}$ condition for every $c \in \mathbb{R}$.
\end{prop}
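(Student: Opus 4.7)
The plan is to follow the same two-step template as in Proposition \ref{PS condition}: first show that any Palais--Smale sequence $\{u_n\}\subset \mathcal{W}_{\alpha,\mu,p}(\Omega)$ at level $c$ is bounded, and then upgrade the weak convergence to strong convergence via Simon's inequalities \eqref{Simon ineq}. The genuinely new ingredient here is the boundedness step: because the spectral parameter satisfies only $\lambda\geq \lambda_1 = 1$, the usual Ambrosetti--Rabinowitz trick does not close on its own, and one must combine it with Mugnai's coercivity condition \ref{Word:f4}.

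For Step 1 I would start by computing $\mathcal{I}(u_n)-\tfrac{1}{\gamma}\langle \mathcal{I}'(u_n),u_n\rangle$. Exploiting \ref{Word:f3} on the set $\{|u_n|>R\}$ (where $\tfrac{1}{\gamma}f(x,u_n)u_n-F(x,u_n)\geq 0$) and \ref{Word:f1} on $\{|u_n|\leq R\}$ (where the integrand is pointwise bounded), and invoking \ref{Word:f5} only in the case $R>0$, I would obtain
\begin{equation*}
C+o(\|u_n\|_{\alpha,\mu,p})\geq \left(\tfrac{1}{p}-\tfrac{1}{\gamma}\right)\|u_n\|_{\alpha,\mu,p}^p - \left(\tfrac{1}{p}-\tfrac{1}{\gamma}\right)\lambda\,\|u_n\|_{L^p(\Omega)}^p.
\end{equation*}
Since $\|u_n\|_{L^p(\Omega)}^p\leq \|u_n\|_{\alpha,\mu,p}^p$ and $\lambda$ may exceed $1$, this alone does not preclude blow-up. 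I would then argue by contradiction: assuming $t_n:=\|u_n\|_{\alpha,\mu,p}\to+\infty$ and setting $v_n:=u_n/t_n$, reflexivity (Proposition \ref{Reflex of full}) and compactness (Corollary \ref{embed full}) give, up to a subsequence, $v_n\rightharpoonup v$ weakly in $\mathcal{W}_{\alpha,\mu,p}(\Omega)$ and $v_n\to v$ in $L^q(\Omega)$ for every $q\in[1,p_{s_\sharp}^*)$. Dividing $\mathcal{I}(u_n)=c+o(1)$ by $t_n^p$ and applying \ref{Word:f4} yields
\begin{equation*}
a_3\,t_n^{\widetilde{\gamma}-p}\,\|v_n\|_{L^{\widetilde\gamma}(\Omega)}^{\widetilde\gamma} \leq \tfrac{1}{p} - \tfrac{\lambda}{p}\|v_n\|_{L^p(\Omega)}^p + o(1).
\end{equation*}
The compatibility of \ref{Word:f1} with \ref{Word:f4} forces $\widetilde{\gamma}\leq q<p_{s_\sharp}^*$, so the compact embedding yields $v_n\to v$ in $L^{\widetilde\gamma}(\Omega)$; then, since $\widetilde\gamma>p$ and $t_n\to+\infty$, the displayed inequality forces $v\equiv 0$, which in turn gives $\|u_n\|_{L^p(\Omega)}^p=o(t_n^p)$. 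Re-inserting this into the AR estimate produces $C+o(t_n)\geq \bigl(\tfrac{1}{p}-\tfrac{1}{\gamma}\bigr)t_n^p(1-o(1))$, contradicting $t_n\to+\infty$.

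For Step 2 I would proceed exactly along the lines of Proposition \ref{PS condition}. Boundedness together with Corollary \ref{embed full} supplies, up to a subsequence, $u_n\rightharpoonup u$ in $\mathcal{W}_{\alpha,\mu,p}(\Omega)$ and $u_n\to u$ in every $L^q(\Omega)$ with $q\in[1,p_{s_\sharp}^*)$. The growth condition \ref{Word:f1} together with H\"older's inequality make all of the lower-order pieces of $\langle \mathcal{I}'(u_n)-\mathcal{I}'(u),u_n-u\rangle$ -- the $\lambda|u|^{p-2}u$ term, the zero-order $|u|^{p-2}u$ term, and the Nemytskii term $\int_\Omega (f(x,u_n)-f(x,u))(u_n-u)\,dx$ -- tend to zero. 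Combined with $\mathcal{I}'(u_n)\to 0$ in $\mathcal{W}_{\alpha,\mu,p}(\Omega)^*$, what remains is exactly the quantity $p(I_1+I_2+I_3)$ from the proof of Proposition \ref{PS condition}, and Simon's inequalities \eqref{Simon ineq}, applied separately for $p\geq 2$ and $1<p<2$, then deliver $\|u_n-u\|_{\alpha,\mu,p}\to 0$.

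The hard part is Step 1: when $\lambda\geq \lambda_1$, neither Ambrosetti--Rabinowitz nor Mugnai's condition is strong enough on its own. They must be combined through the rescaling $v_n=u_n/t_n$, with \ref{Word:f4} killing the blow-up alternative $\liminf\|v_n\|_{L^{\widetilde\gamma}}>0$ and \ref{Word:f3} killing the complementary alternative $v\equiv 0$. Once boundedness is in hand, Step 2 is essentially a transcription of the argument already used for Proposition \ref{PS condition}, the only addition being the control of the Nemytskii term via compact embedding.
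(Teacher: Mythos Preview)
Your proof is correct, though one small cleanup is warranted: the proposition assumes only \ref{Word:f1}--\ref{Word:f4}, and your AR estimate does not actually need \ref{Word:f5} (on $\{|u_n|\leq R\}$ the integrand $\tfrac{1}{\gamma}f(x,u_n)u_n-F(x,u_n)$ is pointwise bounded by \ref{Word:f1} alone), so the parenthetical reference to \ref{Word:f5} should be dropped.

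Your boundedness argument (Step 1) differs from the paper's. You take the AR combination with multiplier $\gamma$, which kills the residual term $(\gamma-k)\int_\Omega F(x,u_n)\,dx$ and leaves an inconclusive inequality; you then recover boundedness by a rescaling/contradiction argument in which \ref{Word:f4} forces $v_n=u_n/t_n$ to vanish in $L^{\widetilde\gamma}$, hence in $L^p$, closing the loop against the AR estimate. The paper instead fixes a multiplier $k\in(p,\gamma)$ \emph{strictly}, so that a positive multiple $(\gamma-k)\int_\Omega F(x,u_n)\,dx$ survives; \ref{Word:f4} converts this into $(\gamma-k)a_3\|u_n\|_{L^{\widetilde\gamma}}^{\widetilde\gamma}$, and a H\"older--Young trade $\|u_n\|_{L^p}^p\leq\varepsilon\|u_n\|_{L^{\widetilde\gamma}}^{\widetilde\gamma}+C_\varepsilon$ with $\varepsilon$ small absorbs the bad $\lambda$-term directly, yielding $k\mathcal{I}(u_n)-\langle\mathcal{I}'(u_n),u_n\rangle\geq(\tfrac{k}{p}-1)\|u_n\|_{\alpha,\mu,p}^p-C$ without any contradiction step or subsequence extraction. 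The paper's route is shorter and avoids invoking compact embeddings for boundedness; your route is a legitimate alternative that makes the complementary roles of \ref{Word:f3} and \ref{Word:f4} more explicit. Step 2 coincides with the paper's argument.
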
 

\begin{proof}  
Let $c \in \mathbb{R}$ and let $\{u_{n}\}\subset \mathcal{W}_{\alpha, \mu, p}(\Omega)$ be a sequence such that
\begin{equation}\label{ps mp c1}
\lim _{n \rightarrow+\infty} \mathcal{I}\left(u_{n}\right)=c 
\end{equation}
and
\begin{equation}\label{ps mp c2}
\lim _{n \rightarrow+\infty} \sup _{\substack{v \in \mathcal{W}_{\alpha, \mu, p}(\Omega) \\\|v\|_{\alpha, \mu, p}=1}}\left|\left\langle \mathcal{I}^{\prime}\left(u_{n}\right), v\right\rangle\right|=0. 
\end{equation}
Observe that we have
\begin{equation}\label{functional MP equiv}
\mathcal{I}(u_n) =\frac{1}{ p}\|u_n\|_{\alpha, \mu, p}^{p}-\frac{\lambda}{p} \int_{\Omega}|u_n|^{p} d x-\int_{\Omega} F(x, u_n) d x.
\end{equation}
We fix $k \in(p, \gamma)$, where $\gamma$ is the constant from \ref{Word:f3}. By \eqref{ps mp c1} and \eqref{ps mp c2}, we have
\begin{equation}\label{ps fpr mp tech 1}
k \mathcal{I}\left(u_{n}\right)-\left\langle \mathcal{I}^{\prime}\left(u_{n}\right), u_{n}\right\rangle \leq C_1+C_2\|u_{n}\|_{\alpha,\mu,p} 
\end{equation}
for some $C_1, C_2>0$ and all $n \in \mathbb{N}$. On the other hand, by \eqref{functional MP deriv} and \eqref{functional MP equiv}, we have
\begin{align}\label{ps for mp tech 7}
&k \mathcal{I}\left(u_{n}\right)-\left\langle \mathcal{I}^{\prime}\left(u_{n}\right), u_{n}\right\rangle 
 =k\left(\frac{1}{p}\|u_n\|_{\alpha, \mu, p}^{p}-\frac{\lambda}{p}\int_{\Omega}|u_n|^pdx-\int_{\Omega} F(x,u_n)dx\right) \nonumber \\
&-\left(\|u_n\|_{\alpha, \mu, p}^{p}-\lambda \int_{\Omega}|u_n|^pdx-\int_{\Omega} f(x,u_n)u_ndx\right)  \nonumber \\
&=\left(\frac{k}{p}-1 \right)\|u_n\|_{\alpha, \mu, p}^{p}-\left(\lambda\left(\frac{k}{p}-1\right)\right)\int_{\Omega}|u_n|^pdx+\int_{\Omega}(f(x, u_n)u_n -kF(x, u_n))dx  \nonumber \\
&=\left(\frac{k}{p}-1 \right)\|u_n\|_{\alpha, \mu, p}^{p}-\left(\lambda\left(\frac{k}{p}-1\right)\right)\int_{\Omega}|u_n|^pdx\\
&+\int_{\Omega}\left(f\left(x, u_n\right) u_n-\gamma F\left(x, u_n\right)\right) d x+(\gamma-k) \int_{\Omega} F\left(x, u_n\right) d x. \nonumber 
\end{align}
We consider the term
\begin{align}\label{ps for mp tech 6}
\begin{split}
    \int_{\Omega}\left(f\left(x, u_n\right) u_n-\gamma F\left(x, u_n\right)\right) d x&=\int_{\Omega \cap{\left\{\left|u_n\right| > R\right\}}}\left(f\left(x, u_n\right) u_n-\gamma F\left(x, u_n\right)\right) d x\\
    &+\int_{\Omega \cap{\left\{\left|u_n\right| \leq R\right\}}}\left(f\left(x, u_n\right) u_n-\gamma F\left(x, u_n\right)\right) d x,
\end{split}
\end{align}
where $R\geq0$ is the constant from \ref{Word:f3}.
Applying \ref{Word:f3} for $\left|u_n\right|>R$ we have that
\begin{equation}\label{ps for mp tech 4}
    \int_{\Omega \cap{\left\{\left|u_n\right| > R\right\}}}\left(f\left(x, u_n\right) u_n-\gamma F\left(x, u_n\right)\right) d x \geq 0.
\end{equation}
On the other hand, using \ref{Word:f1} and \eqref{estim for f ex}, for $|t| \leq R$, we have
\begin{equation}\label{ps fpr mp tech 3}
    |f(x, t) t-\gamma F(x, t)| \leq C_R
\end{equation}
for some constant $C_R\geq 0$. Using \eqref{ps fpr mp tech 3}, we obtain
\begin{equation}\label{ps for mp tech 5}
    \left|\int_{\Omega \cap{\left\{\left|u_n\right| \leq R\right\}}}\left(f\left(x, u_n\right) u_n-\gamma F\left(x, u_n\right)\right) d x \right| \leq C_R|\Omega|=\widetilde{C}_R
\end{equation}
for some $\widetilde{C}_R\geq0$. Combining \eqref{ps for mp tech 4} and \eqref{ps for mp tech 5} in \eqref{ps for mp tech 6}, and substituting the result into \eqref{ps for mp tech 7}, we derive 
\begin{align}\label{ps for mp tech 8}
 k \mathcal{I}\left(u_{n}\right)-\left\langle \mathcal{I}^{\prime}\left(u_{n}\right), u_{n}\right\rangle &\geq \left(\frac{k}{p}-1 \right)\|u_n\|_{\alpha, \mu, p}^{p}-\left(\lambda\left(\frac{k}{p}-1\right)\right)\int_{\Omega}|u_n|^pdx\\
&+(\gamma-k)\int_{\Omega}F(x, u_n)dx-\widetilde{C}_R.\nonumber
\end{align}
Applying Hölder and Young inequalities, we have, for any $\varepsilon>0$, that
\begin{equation}\label{ps for mp tech 88}
    \|u_n\|_{p}^{p} \leq |\Omega|^{1-\frac{p}{\tilde{\gamma}}}\|u\|_{\tilde{\gamma}}^p\leq \varepsilon\|u_n\|_{\widetilde{\gamma}}^{\widetilde{\gamma}}+C_{\varepsilon}|\Omega| .
\end{equation}
Further, using \ref{Word:f4} and \eqref{ps for mp tech 88}, we get from \eqref{ps for mp tech 8} that
\begin{align*}
& k \mathcal{I}\left(u_{n}\right)-\left\langle \mathcal{I}^{\prime}\left(u_{n}\right), u_{n}\right\rangle \\
& \geq \left(\frac{k}{p}-1 \right)\|u_n\|_{\alpha, \mu, p}^{p}-\left(\lambda\left(\frac{k}{p}-1\right)\right)\int_{\Omega}|u_n|^pdx +(\gamma-k) a_{3} \int_{\Omega}\left|u_{n}\right|^{\widetilde{\gamma}} d x\\
&-(\gamma-k)  \int_{\Omega}a_{4}(x) d x-\widetilde{C}_R\\
& \geq\left(\frac{k}{p}-1 \right)\|u_n\|_{\alpha, \mu, p}^{p}+\left[(\gamma-k) a_{3}-\lambda\varepsilon\left(\frac{k}{p}-1\right)\right] \int_{\Omega}\left|u_{n}\right|^{\widetilde{\gamma}} d x-\widetilde{C}_{\varepsilon}
\end{align*}
for some $\widetilde{C}_{\varepsilon}>0$. Taking $\varepsilon$ small enough, this yields
\begin{equation}\label{ps fpr mp tech 2}
    k \mathcal{I}\left(u_{n}\right)-\left\langle \mathcal{I}^{\prime}\left(u_{n}\right), u_{n}\right\rangle \geq\left(\frac{k}{p}-1\right)\left\|u_{n}\right\|_{\alpha, \mu, p}^{p}-\widetilde{C}_{\varepsilon} .
\end{equation}
Thus, from \eqref{ps fpr mp tech 1} and \eqref{ps fpr mp tech 2}, we deduce that the sequence $\{u_{n}\}$ is bounded in $\mathcal{W}_{\alpha, \mu, p}(\Omega)$. Hence, we may assume that there exists $u\in \mathcal{W}_{\alpha, \mu, p}(\Omega)$ such that 
\begin{equation}\label{weak conv 1}
    u_{n} \rightharpoonup u \text{ in } \mathcal{W}_{\alpha, \mu, p}(\Omega)
\end{equation}
and 
\begin{equation}\label{strong lp conv 1}
    u_{n} \rightarrow u \text{ in } L^{p}(\Omega),
\end{equation}
up to a subsequence, as $n \rightarrow \infty$. 

By \eqref{ps mp c2} and \eqref{weak conv 1}, we have that
\begin{equation*}
    \left\langle \mathcal{I}^{\prime}\left(u_{n}\right), u_{n}-u\right\rangle \rightarrow 0.
\end{equation*}
Moreover, we have
\begin{align*}
\left\langle \mathcal{P}^{\prime}\left(u_{n}\right), u_{n}-u\right\rangle & =\left\langle \mathcal{I}^{\prime}\left(u_{n}\right), u_{n}-u\right\rangle+\lambda \int_{\Omega}\left|u_{n}\right|^{p-2} u_{n}\left(u_{n}-u\right) d x\\
&+\int_{\Omega} f\left(x, u_{n}\right)\left(u_{n}-u\right) d x,
\end{align*}
where $\mathcal{P}$ is introduced in \eqref{energy functional J}.
Using \ref{Word:f1} and \eqref{strong lp conv 1}, we obtain
$$
\int_{\Omega}\left|u_{n}\right|^{p-2} u_{n}\left(u_{n}-u\right) d x \rightarrow 0
$$
and
$$
\int_{\Omega} f\left(x, u_{n}\right)\left(u_{n}-u\right) d x \rightarrow 0.
$$
Consequently, we have 
\begin{equation*}
    \left\langle \mathcal{P}^{\prime} \left(u_{n}\right), u_{n}-u\right\rangle \rightarrow 0 \quad \text{ as } n \rightarrow \infty.
\end{equation*}
Moreover, $u_n \rightharpoonup u$ weakly in $\mathcal{W}_{\alpha, \mu, p}(\Omega)$ implies
\begin{equation*}
    \left\langle \mathcal{P}^{\prime}\left(u\right), u_{n}-u\right\rangle \rightarrow 0 \quad \text{ as } n \rightarrow \infty.
\end{equation*}
Then, by using the same argument as in the proof of Proposition \ref{PS condition}, we conclude that $u_{n} \rightarrow u$ in $\mathcal{W}_{\alpha, \mu, p}(\Omega)$, as desired.
\end{proof}

\begin{proof}[Proof of Theorem \ref{main result 02}]
    We begin by verifying the following geometric properties of the functional $\mathcal{I}$. 
    
    Let $\mathcal{C}_{k}^{-}$ and $\mathcal{C}_{k}^{+}$ be as in \eqref{cone -} and \eqref{cone +}, respectively. 
Taking any $u \in \mathcal{C}_{k}^{+}$, and using the inequalities in \eqref{cone +}, \eqref{estim for f ex}, and Corollary \ref{embed full}, we have that
\begin{align}\label{linking geom tech 11}
\mathcal{I}(u) &=\frac{\alpha}{p} \int_{\Omega}|\nabla u|^{p} d x+\int_{(0,1)} \frac{c_{N, s, p}}{2p} \iint_{\mathcal{Q}} \frac{|u(x)-u(y)|^{p}}{|x-y|^{N+p s}} d x d y d \mu(s) +\frac{1}{p} \int_{\Omega}|u|^{p} d x\nonumber\\
&-\frac{\lambda}{p} \int_{\Omega}|u|^{p} d x-\int_{\Omega} F(x, u) d x\nonumber\\
&=\frac{1}{p}\left(\alpha \int_{\Omega}|\nabla u|^{p} d x+\int_{(0,1)} \frac{c_{N, s, p}}{2} \iint_{\mathcal{Q}} \frac{|u(x)-u(y)|^{p}}{|x-y|^{N+p s}} d x d y d \mu(s)+\int_{\Omega}|u|^{p} d x\right)\nonumber\\
&-\frac{\lambda}{p} \int_{\Omega}|u|^{p} d x-\int_{\Omega} F(x, u) d x\\
& \geq \frac{1}{p}\|u\|_{\alpha,\mu,p}^{p}-\frac{ \lambda}{ p} \int_{\Omega}|u|^{p} d x-\frac{\varepsilon}{p}\int_{\Omega}|u|^{p} d x-\delta(\varepsilon) \int_{\Omega}|u|^{q} d x\nonumber \\
& \geq \frac{1}{p}\|u\|_{\alpha,\mu,p}^{p}-\frac{1}{ p \lambda_{k+1}}(\lambda+\varepsilon)\|u\|_{\alpha,\mu,p}^{p}-\delta(\varepsilon) \int_{\Omega}|u|^{q} d x \nonumber\\
& \geq \frac{1}{p}\left(1-\frac{ \lambda+\varepsilon}{\lambda_{k+1}}\right)\|u\|_{\alpha,\mu,p}^{p}-c\delta(\varepsilon)\|u\|_{\alpha,\mu,p}^{q}\nonumber\\
&=\|u\|^p_{\alpha,\mu,p}\left(\frac{1}{p}-\frac{ \lambda+\varepsilon}{p\lambda_{k+1}}-c\delta(\varepsilon)\|u\|_{\alpha,\mu,p}^{q-p}\right)\nonumber
\end{align}
for $c=c\left(N, \Omega, s_{\sharp},p\right)>0$.

Taking
\begin{equation*}
    \varepsilon \in\left(0, \lambda_{k+1}-\lambda\right) \quad \text { and } \quad r_{+} \in\left(0,\left(\frac{1}{pc\delta(\varepsilon)}-\frac{\lambda+\varepsilon}{p \lambda_{k+1}c\delta(\varepsilon)}\right)^{1 /(q-p)}\right)
\end{equation*}
we obtain that
$$
\frac{1}{p}-\frac{\lambda+\varepsilon}{p \lambda_{k+1}}-c \delta(\varepsilon) r_+^{q-p}>0.
$$
Thus, from \eqref{linking geom tech 11} we get 
$$
\inf _{\substack{u \in C_k^+ \\\|u\|_{\alpha, \mu, p}=r_+}} \mathcal{I}(u) \geq r_+^{p}\left(\frac{1}{p}-\frac{\lambda+\varepsilon}{p \lambda_{k+1}}-c \delta(\varepsilon) r_+^{q-p}\right)=: \theta>0.
$$
Hence, we have that there exist $r_{+}>0$ and $\theta>0$ such that, if $\|u\|_{\alpha, \mu, p}=r_{+}$, then $\mathcal{I}(u) \geq \theta$.

Using \ref{Word:f3}, \ref{Word:f5}, and \eqref{Link cond on lamb}, for all $u \in \mathcal{C}_{k}^{-}$, we derive
\begin{align*}
\mathcal{I}(u) & =\frac{1}{p}\|u\|_{\alpha, \mu, p}^p-\frac{\lambda}{p} \int_{\Omega}|u|^p d x-\int_{\Omega} F(x, u) d x \leq\frac{\lambda_k-\lambda}{p}\int_{\Omega}|u|^p d x\leq 0.
\end{align*}
Moreover, taking $e \in \mathcal{W}_{\alpha,\mu,p} \backslash \mathcal{C}_{k}^{-}$ and using \ref{Word:f4}, we obtain, for each $u \in \mathcal{C}_{k}^{-}$ and $t>0$, that 
\begin{align*}
\mathcal{I}(u+t e) &=\frac{\alpha}{p} \int_{\Omega}|\nabla (u+te)|^{p} d x+\int_{(0,1)} \frac{c_{N, s, p}}{2p} \iint_{\mathcal{Q}} \frac{|(u+te)(x)-(u+te)(y)|^{p}}{|x-y|^{N+p s}} d x d y d \mu(s)\\
&+\frac{1}{p} \int_{\Omega}|u+te|^{p} d x-\frac{\lambda}{p} \int_{\Omega}|u+te|^{p} d x-\int_{\Omega} F(x, u+te) d x\\
&\leq \frac{2^{p-1}\alpha}{p} \left(\int_{\Omega}|\nabla u|^p d x+t^p \int_{\Omega}|\nabla e|^p d x\right)\\
&+\frac{2^{p-1}}{2p}\Bigg(\int_{(0,1)} c_{N, s, p} \iint_{\mathcal{Q}} \frac{|u(x)-u(y)|^{p}}{|x-y|^{N+p s}} d x d y d \mu(s)\\
&+t^p\int_{(0,1)} c_{N, s, p} \iint_{\mathcal{Q}} \frac{|e(x)-e(y)|^{p}}{|x-y|^{N+p s}} d x d y d \mu(s)\Bigg)\\
&+\frac{(1-\lambda) t^p}{p} \int_{\Omega}\left|\frac{u}{t}+e\right|^{p} d x-a_{3} t^{\widetilde{\gamma}} \int_{\Omega}\left|\frac{u}{t}+e\right|^{\widetilde{\gamma}} d x+\left\|a_{4}\right\|_{L^1(\Omega)} \rightarrow-\infty
\end{align*}
as $t \rightarrow+\infty$. Hence, there exists $r_{-}>r_{+}$ such that $\mathcal{I}(v) \leq 0$ for any $v \in \mathcal{C}_{k}^{-}+\left(\mathbb{R}^{+} e\right)$ with $\|v\|_{\alpha, \mu, p} \geq r_{-}$.

Now, for $r_{-}, r_{+}>0$, we set
\begin{align*}
&D_{-}=\left\{u \in \mathcal{C}_{k}^{-}:\|u\|_{\alpha, \mu, p} \leq r_{-}\right\},  \\
& S_{+}=\left\{u \in \mathcal{C}_{k}^{+}:\|u\|_{\alpha, \mu, p}=r_{+}\right\},\\
& Q=\left\{u+t e: u \in \mathcal{C}_{k}^{-}, t \geq 0,\|u+t e\|_{\alpha, \mu, p} \leq r_{-}\right\}, \\
& H=\left\{u+t e: u \in \mathcal{C}_{k}^{-}, t \geq 0,\|u+t e\|_{\alpha, \mu, p}=r_{-}\right\}.
\end{align*}

Since $\lambda_k<\lambda_{k+1}$, it follows from Theorem \ref{link index 2} that
\begin{equation*}
    i\left(\mathcal{C}_{k}^{-} \backslash\{0\}\right)=i\left(\mathcal{W}_{\alpha, \mu, p}(\Omega) \backslash \mathcal{C}_{k}^{+}\right)=k.
\end{equation*}
Moreover, we know that $\mathcal{C}_{k}^{-},$ $\mathcal{C}_{k}^{+}$ are two symmetric closed cones in $\mathcal{W}_{\alpha, \mu, p}(\Omega)$ with $\mathcal{C}_{k}^{-}\cap \mathcal{C}_{k}^{+}=\{0\}$. Then, applying Corollary \ref{link index} we conclude that ($Q, D_{-} \cup H$) links $S_{+}$ cohomologically in dimension $k+1$ over $\mathbb{Z}_{2}$. In particular, Proposition \ref{link index 3} ensures that $\left(Q, D_{-} \cup H\right)$ links $S_{+}$. 

Furthermore, by the geometric properties of a functional $\mathcal{I}$ stated above, we have that $\mathcal{I}$ is bounded on $Q$, $\mathcal{I}(u) \leq 0$ for every $u \in D_{-} \cup H$ and $\mathcal{I}(u) \geq \theta>0$ for every $u \in S_{+}$. Therefore, we have
$$
\sup _{D_{-} \cup H} \mathcal{I}<\inf _{S_{+}} \mathcal{I}, \quad \sup _Q \mathcal{I}<+\infty .
$$
In addition, by Proposition \ref{PS for linking}, the $(P S)_{c}$ condition holds. Then an application of Theorem \ref{link main thm} with 
$S=D_{-} \cup H,$ $D=Q,$ $A=S_{+}$ and $B=\emptyset,$ ensures that $\mathcal{I}$ admits a critical value $c \geq \theta$. Hence, there exists a critical point $u$ such that $\mathcal{I}(u)=c>0$. This implies the existence of a nontrivial weak solution to problem \eqref{main problem 1}.
\end{proof}

\subsection{Existence of a mountain pass solution}

Throughout this subsection, we assume that $\lambda<\lambda_1=1$.

We define the functional $\mathcal{J}: \mathcal{W}_{\alpha, \mu, p}(\Omega) \rightarrow \mathbb{R}$ as 
\begin{align}\label{functional MP or}
\begin{split}
    \mathcal{J}(u)&:=\frac{\alpha}{p} \int_{\Omega}|\nabla u|^{p} d x+\int_{(0,1)} \frac{c_{N, s, p}}{2p} \iint_{\mathcal{Q}} \frac{|u(x)-u(y)|^{p}}{|x-y|^{N+p s}} d x d y d \mu(s)\\
&+\frac{1}{p} \int_{\Omega}|u|^{p} d x-\frac{\lambda}{p} \int_{\Omega}|u|^{p} d x-\int_{\Omega} F(x, u) d x .
\end{split}
\end{align}
Note that $\mathcal{J}$ is of class $C^{1}$ and for all $u, v \in \mathcal{W}_{\alpha, \mu, p}(\Omega)$ we have 
\begin{align}\label{functional MP deriv or}
\begin{split}
    \left\langle \mathcal{J}^{\prime}(u), v\right\rangle= & \alpha \int_{\Omega} |\nabla u|^{p-2}\nabla u \nabla v d x\\
    &+\int_{(0,1)} \frac{c_{N, s, p}}{2} \iint_{\mathcal{Q}} \frac{\mathcal{A}(u(x)-u(y))(v(x)-v(y))}{|x-y|^{N+p s}} d x d y d \mu(s) \\
&+\int_{\Omega} |u|^{p-2}u v d x-\lambda \int_{\Omega} |u|^{p-2}u v d x-\int_{\Omega} f(x, u) v d x .
\end{split}
\end{align}
Moreover, the critical points of $\mathcal{J}$ are weak solutions of problem \eqref{main problem 1}.

\begin{prop}\label{MP PS prop}
Let $\lambda<\lambda_1$. Assume that $f$ satisfies \ref{Word:f1}, \ref{Word:f2} and \ref{Word:f3}. Then the functional $\mathcal{J}$, defined in \eqref{functional MP or}, satisfies the $(P S)_{c}$ condition for every $c \in \mathbb{R}$.
\end{prop}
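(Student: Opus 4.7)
The plan is to mirror the strategy of Proposition \ref{PS for linking}, with the sub-threshold condition $\lambda<\lambda_1=1$ supplying the coercivity that was previously furnished by \ref{Word:f4}. So \ref{Word:f4} plays no role here; everything rests on the spectral gap.

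Given a $(PS)_c$ sequence $\{u_n\}\subset\mathcal{W}_{\alpha,\mu,p}(\Omega)$, i.e.\ $\mathcal{J}(u_n)\to c$ and $\mathcal{J}'(u_n)\to 0$ in the dual, the first step is boundedness in $\mathcal{W}_{\alpha,\mu,p}(\Omega)$. Fix $k\in(p,\gamma)$ and combine \eqref{functional MP or} with \eqref{functional MP deriv or} to obtain
\begin{align*}
k\mathcal{J}(u_n)-\langle\mathcal{J}'(u_n),u_n\rangle
&=\Bigl(\frac{k}{p}-1\Bigr)\Bigl(\|u_n\|_{\alpha,\mu,p}^{p}-\lambda\int_{\Omega}|u_n|^{p}\,dx\Bigr)\\
&\quad +\int_{\Omega}\bigl(f(x,u_n)u_n-kF(x,u_n)\bigr)\,dx.
\end{align*}
On $\{|u_n|>R\}$, assumption \ref{Word:f3} gives $f(x,u_n)u_n-kF(x,u_n)\ge(\gamma-k)F(x,u_n)\ge 0$, since $\gamma>k$ and $F$ is strictly positive there; on $\{|u_n|\le R\}$ the growth condition \ref{Word:f1} forces a uniform bound on the integrand, so the last integral is bounded below by a constant $-\widetilde C_R$.

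For the bracketed term, the variational characterisation in Theorem \ref{Main result 1} (combined with the observation $\lambda_1=1$) amounts to $\int_{\Omega}|u|^{p}\,dx\le\|u\|_{\alpha,\mu,p}^{p}$ for all $u$, whence
\[
\|u_n\|_{\alpha,\mu,p}^{p}-\lambda\int_{\Omega}|u_n|^{p}\,dx\ \ge\ \bigl(1-\max\{\lambda,0\}\bigr)\,\|u_n\|_{\alpha,\mu,p}^{p},
\]
with a strictly positive coefficient thanks to $\lambda<1$. Together with the standard upper bound $k\mathcal{J}(u_n)-\langle\mathcal{J}'(u_n),u_n\rangle\le M+N\|u_n\|_{\alpha,\mu,p}$ inherited from the $(PS)_c$ property, this yields boundedness of $\{u_n\}$.

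From here the argument runs in parallel to the end of Proposition \ref{PS condition}: by reflexivity (Proposition \ref{Reflex of full}) and the compact embedding of Corollary \ref{embed full}, a subsequence satisfies $u_n\rightharpoonup u$ in $\mathcal{W}_{\alpha,\mu,p}(\Omega)$ and $u_n\to u$ strongly in $L^p(\Omega)$ and $L^q(\Omega)$. Testing $\mathcal{J}'(u_n)$ against $u_n-u$, the terms $\lambda\int_{\Omega}|u_n|^{p-2}u_n(u_n-u)\,dx$ and $\int_{\Omega}f(x,u_n)(u_n-u)\,dx$ vanish in the limit by \ref{Word:f1}, H\"older's inequality, and the strong $L^p,L^q$ convergence. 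What survives is exactly $\langle\mathcal{P}'(u_n)-\mathcal{P}'(u),u_n-u\rangle\to 0$, and the Simon-type inequalities \eqref{Simon ineq} applied as in \eqref{pg2 norm} and \eqref{pl2 norm} then upgrade the weak convergence to $\|u_n-u\|_{\alpha,\mu,p}\to 0$.

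The main obstacle is the boundedness step. Since \ref{Word:f4} is not available one cannot dominate $\lambda\|u_n\|_{L^p}^{p}$ by a stronger $L^{\widetilde\gamma}$-norm as in Proposition \ref{PS for linking}; the whole coercivity must be extracted from $\lambda<\lambda_1$. The Rayleigh characterisation $\lambda_1=\inf\{\|u\|_{\alpha,\mu,p}^{p}/\|u\|_{L^p}^{p}:u\ne 0\}=1$ is precisely what closes this gap, and it is the only point where the proof structurally departs from its linking counterpart.
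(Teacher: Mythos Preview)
Your argument is correct and follows essentially the same route as the paper: both exploit $\lambda<\lambda_1=1$ together with the trivial bound $\|u\|_{L^p(\Omega)}^p\le\|u\|_{\alpha,\mu,p}^p$ (built into the norm) to absorb the $\lambda$-term and obtain coercivity, then finish via the Simon inequalities exactly as in Proposition~\ref{PS condition}. The only cosmetic difference is that the paper takes $k=\gamma$ directly rather than an intermediate $k\in(p,\gamma)$, which makes the residual term $(\gamma-k)\int F$ disappear rather than being discarded as nonnegative.
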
 

\begin{proof}  
Let $c \in \mathbb{R}$ and let $\{u_{n}\}\subset \mathcal{W}_{\alpha, \mu, p}(\Omega)$ be a sequence such that
\begin{equation}\label{ps mp c1 or}
\lim _{n \rightarrow+\infty} \mathcal{J}\left(u_{n}\right)=c 
\end{equation}
and
\begin{equation}\label{ps mp c2 or}
\lim _{n \rightarrow+\infty} \sup _{\substack{v \in \mathcal{W}_{\alpha, \mu, p}(\Omega) \\\|v\|_{\alpha, \mu, p}=1}}\left|\left\langle \mathcal{J}^{\prime}\left(u_{n}\right), v\right\rangle\right|=0. 
\end{equation}
Observe that we have
\begin{align}\label{functional MP equiv or}
\mathcal{J}(u_n) & =\frac{1}{ p}\|u_n\|_{\alpha, \mu, p}^{p}-\frac{\lambda}{p} \int_{\Omega}|u_n|^{p} d x-\int_{\Omega} F(x, u_n) d x.
\end{align}
Taking $\gamma$ as in \ref{Word:f3}, we have by \eqref{ps mp c1 or} and \eqref{ps mp c2 or} that
\begin{equation}\label{ps fpr mp tech 1 or}
\gamma \mathcal{J}\left(u_{n}\right)-\left\langle \mathcal{J}^{\prime}\left(u_{n}\right), u_{n}\right\rangle \leq C_1+C_2\|u_{n}\|_{\alpha,\mu,p} 
\end{equation}
for some constants $C_1, C_2>0$ and all $n \in \mathbb{N}$. On the other hand, by \eqref{functional MP deriv or} and \eqref{functional MP equiv or}, we have
\begin{align}\label{ps for mp tech 7 or}
\gamma \mathcal{J}\left(u_{n}\right)-\left\langle \mathcal{J}^{\prime}\left(u_{n}\right), u_{n}\right\rangle 
 &=\gamma\left(\frac{1}{p}\|u_n\|_{\alpha, \mu, p}^{p}-\frac{\lambda}{p}\int_{\Omega}|u_n|^pdx-\int_{\Omega} F(x,u_n)dx\right) \nonumber \\
&-\left(\|u_n\|_{\alpha, \mu, p}^{p}-\lambda \int_{\Omega}|u_n|^pdx-\int_{\Omega} f(x,u_n)u_ndx\right)  \\
&=\left(\frac{\gamma}{p}-1 \right)\|u_n\|_{\alpha, \mu, p}^{p}-\left(\lambda\left(\frac{\gamma}{p}-1\right)\right)\int_{\Omega}|u_n|^pdx \nonumber\\
&+\int_{\Omega}(f(x, u_n)u_n -\gamma F(x, u_n))dx  \nonumber.
\end{align}
Consider the term
\begin{align}\label{ps for mp tech 6 or}
\begin{split}
    \int_{\Omega}\left(f\left(x, u_n\right) u_n-\gamma F\left(x, u_n\right)\right) d x&=\int_{\Omega \cap{\left\{\left|u_n\right| > R\right\}}}\left(f\left(x, u_n\right) u_n-\gamma F\left(x, u_n\right)\right) d x\\
    &+\int_{\Omega \cap{\left\{\left|u_n\right| \leq R\right\}}}\left(f\left(x, u_n\right) u_n-\gamma F\left(x, u_n\right)\right) d x,
\end{split}
\end{align}
where $R\geq0$ is the constant introduced in \ref{Word:f3}.
As already proved in \eqref{ps for mp tech 4} and \eqref{ps for mp tech 5}, we have
\begin{equation}\label{ps for mp tech 4 or}
    \int_{\Omega \cap{\left\{\left|u_n\right| > R\right\}}}\left(f\left(x, u_n\right) u_n-\gamma F\left(x, u_n\right)\right) d x \geq 0
\end{equation}
and
\begin{equation}\label{ps for mp tech 5 or}
    \left|\int_{\Omega \cap{\left\{\left|u_n\right| \leq R\right\}}}\left(f\left(x, u_n\right) u_n-\gamma F\left(x, u_n\right)\right) d x \right| \leq C_R|\Omega|=\widetilde{C}_R
\end{equation}
for some $\widetilde{C}_R \geq 0.$
Combining \eqref{ps for mp tech 4 or} and \eqref{ps for mp tech 5 or} in \eqref{ps for mp tech 6 or}, and substituting the result into \eqref{ps for mp tech 7 or}, we obtain that
\begin{align}\label{ps for mp tech 8 or}
 \gamma \mathcal{J}\left(u_{n}\right)-\left\langle \mathcal{J}^{\prime}\left(u_{n}\right), u_{n}\right\rangle
 &\geq \left(\frac{\gamma}{p}-1 \right)\|u_n\|_{\alpha, \mu, p}^{p}-\left(\lambda\left(\frac{\gamma}{p}-1\right)\right)\int_{\Omega}|u_n|^pdx-\widetilde{C}_R \nonumber\\
 &\geq  \lambda^* \left(\frac{\gamma}{p}-1 \right)\|u_n\|_{\alpha, \mu, p}^{p}-\widetilde{C}_R, 
\end{align}
where we set $\lambda^*:=\min\{1,1-\lambda\}>0$.

In \eqref{ps for mp tech 8 or} we used the following inequalities
    \begin{align*} 
    \left(\frac{\gamma}{p}-1 \right)\|u_n\|_{\alpha, \mu, p}^{p}&-\left(\lambda\left(\frac{\gamma}{p}-1\right)\right)\int_{\Omega}|u_n|^pdx-\widetilde{C}_R\\
    &\geq\left(\frac{\gamma}{p}-1 \right)\min\{1,1-\lambda\}\|u_n\|_{\alpha, \mu, p}^{p}-\widetilde{C}_R\\
    &=\left\{\begin{array}{l}
 \left(\frac{\gamma}{p}-1 \right)\|u_n\|_{\alpha, \mu, p}^{p}-\widetilde{C}_R \text { if } \lambda \leq 0,  \\
(1-\lambda)\left(\frac{\gamma}{p}-1 \right)\|u_n\|_{\alpha, \mu, p}^{p}-\widetilde{C}_R \text{ if } 0<\lambda<1. 
\end{array}\right.
\end{align*}

Thus, by \eqref{ps fpr mp tech 1 or} and \eqref{ps for mp tech 8 or}, since $\gamma>p$, we deduce that the sequence $\{u_{n}\}$ is bounded in $\mathcal{W}_{\alpha, \mu, p}(\Omega)$. Hence, there exists $u\in \mathcal{W}_{\alpha, \mu, p}(\Omega)$ such that, as $n \rightarrow \infty$
\begin{equation}\label{weak conv 1 or}
    u_{n} \rightharpoonup u \text{ in } \mathcal{W}_{\alpha, \mu, p}(\Omega)
\end{equation}
and 
\begin{equation}\label{strong lp conv 1 or}
    u_{n} \rightarrow u \text{ in } L^{p}(\Omega),
\end{equation}
up to a subsequence. 

By \eqref{ps mp c2 or} and \eqref{weak conv 1 or}, we have that
\begin{equation*}
    \left\langle \mathcal{J}^{\prime}\left(u_{n}\right), u_{n}-u\right\rangle \rightarrow 0.
\end{equation*}
From this, arguing exactly as in the proof of Proposition \ref{PS for linking}, we conclude that $u_{n} \rightarrow u$ in $\mathcal{W}_{\alpha, \mu, p}(\Omega)$, verifying the $(PS)_c$ condition.
\end{proof}

\begin{prop}\label{MP geometry prop}
    Let $\lambda<\lambda_{1}$. Assume that $f$ satisfies assumptions \ref{Word:f1}, \ref{Word:f2} and \ref{Word:f4}. Then, there exist $\rho>0$ and $\beta>0$ such that, 

$(i)$ for any $u \in \mathcal{W}_{\alpha, \mu, p}(\Omega)$ with $\|u\|_{\alpha, \mu, p}=\rho$, it holds that $\mathcal{J}(u) \geq \beta$;

$(ii)$ there exists $e \in \mathcal{W}_{\alpha, \mu, p}(\Omega)$ such that $e \geq 0$ a.e. in $\mathbb{R}^{N},$ $\|e\|_{\alpha, \mu, p}>\rho$ and $\mathcal{J}(e)<\beta$.
\end{prop}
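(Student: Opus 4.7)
The plan is to verify the two geometric conditions separately, treating (i) via coercivity estimates using the first eigenvalue and the growth bound on $F$, and (ii) by testing $\mathcal{J}$ along a ray in the direction of a fixed nonnegative function, exploiting the superquadratic lower bound in \ref{Word:f4}.

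For part (i), I would start from the identity
\begin{equation*}
\mathcal{J}(u)=\frac{1}{p}\|u\|_{\alpha,\mu,p}^{p}-\frac{\lambda}{p}\|u\|_{L^p(\Omega)}^p-\int_{\Omega}F(x,u)\,dx
\end{equation*}
and apply \eqref{estim for f ex} to get $-\int_\Omega F(x,u)\,dx\ge -\frac{\varepsilon}{p}\|u\|_{L^p(\Omega)}^p-\delta(\varepsilon)\|u\|_{L^q(\Omega)}^q$. Since $\lambda_1=1$ is the infimum characterization given in Theorem \ref{Main result 1}, we have $\|u\|_{L^p(\Omega)}^p\le \|u\|_{\alpha,\mu,p}^p$, so as in the proof of Proposition \ref{MP PS prop} I would set $\lambda^*:=\min\{1,1-\lambda\}>0$ to write
\begin{equation*}
\frac{1}{p}\|u\|_{\alpha,\mu,p}^p-\frac{\lambda}{p}\|u\|_{L^p(\Omega)}^p\ge \frac{\lambda^*}{p}\|u\|_{\alpha,\mu,p}^p.
\end{equation*}
Combining these estimates with the continuous embedding $\mathcal{W}_{\alpha,\mu,p}(\Omega)\hookrightarrow L^q(\Omega)$ from Corollary \ref{embed full} (which applies since $q\in(p,p_{s_\sharp}^*)$ by \ref{Word:f1}) yields
\begin{equation*}
\mathcal{J}(u)\ge \frac{\lambda^*-\varepsilon}{p}\|u\|_{\alpha,\mu,p}^p-C\,\delta(\varepsilon)\|u\|_{\alpha,\mu,p}^q.
\end{equation*}
Taking $\varepsilon\in(0,\lambda^*)$ and factoring out $\|u\|_{\alpha,\mu,p}^p$, the second term is dominated by the first whenever $\|u\|_{\alpha,\mu,p}=\rho$ is small enough, and one obtains $\mathcal{J}(u)\ge \beta$ for a suitable $\beta>0$.

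For part (ii), I would select a fixed, nonnegative, nontrivial $u_0\in\mathcal{W}_{\alpha,\mu,p}(\Omega)$; since $\Omega$ is bounded and $h\equiv g\equiv 0$, the constant $u_0\equiv 1$ on $\mathbb{R}^N$ belongs to the space (its gradient and Gagliardo seminorm vanish, while $\|u_0\|_{L^p(\Omega)}^p=|\Omega|$). For $t>0$, the $p$-homogeneity of the leading term gives
\begin{equation*}
\mathcal{J}(tu_0)=\frac{t^p}{p}\|u_0\|_{\alpha,\mu,p}^p-\frac{\lambda t^p}{p}\|u_0\|_{L^p(\Omega)}^p-\int_\Omega F(x,tu_0)\,dx,
\end{equation*}
and \ref{Word:f4} yields $\int_\Omega F(x,tu_0)\,dx\ge a_3 t^{\widetilde{\gamma}}\|u_0\|_{L^{\widetilde{\gamma}}(\Omega)}^{\widetilde{\gamma}}-\|a_4\|_{L^1(\Omega)}$. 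Hence
\begin{equation*}
\mathcal{J}(tu_0)\le C_1 t^p-C_2 t^{\widetilde{\gamma}}+C_3,
\end{equation*}
with $C_2>0$ because $u_0\not\equiv 0$. Since $\widetilde{\gamma}>p$, $\mathcal{J}(tu_0)\to-\infty$ as $t\to\infty$, so one can fix $t$ large enough so that simultaneously $\|tu_0\|_{\alpha,\mu,p}>\rho$ and $\mathcal{J}(tu_0)<\beta$ (in fact, negative), and set $e:=tu_0$.

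I do not expect a real obstacle: both parts are standard for mountain-pass problems with subcritical growth, and every tool required — the eigenvalue bound, the embedding $\mathcal{W}_{\alpha,\mu,p}(\Omega)\hookrightarrow L^q(\Omega)$ for $q<p_{s_\sharp}^*$, and the Ambrosetti--Rabinowitz-type superquadratic condition \ref{Word:f4} — has already been established. The only care needed is handling the sign of $\lambda$ uniformly in part (i), which is achieved cleanly by introducing $\lambda^*=\min\{1,1-\lambda\}$, and checking that the chosen $u_0$ lies in $\mathcal{W}_{\alpha,\mu,p}(\Omega)$ with $\|u_0\|_{L^{\widetilde{\gamma}}(\Omega)}>0$, which is immediate for a positive constant.
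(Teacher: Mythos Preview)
Your proposal is correct and follows essentially the same approach as the paper: part (i) combines the growth estimate \eqref{estim for f ex}, the bound $\|u\|_{L^p(\Omega)}^p\le\|u\|_{\alpha,\mu,p}^p$ together with $\lambda^*=\min\{1,1-\lambda\}$, and the embedding of Corollary \ref{embed full}, while part (ii) tests along a ray $tu_0$ and invokes \ref{Word:f4}. The only cosmetic differences are that the paper takes an abstract $u_0\ge0$ with $\|u_0\|_{\alpha,\mu,p}=1$ and introduces $\lambda_*=\max\{1,1-\lambda\}$ to bound the $t^p$ term, whereas you choose the explicit constant $u_0\equiv1$ and absorb the coefficients into generic constants; both choices are valid.
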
 
\begin{proof}
    $(i)$ Let $u \in \mathcal{W}_{\alpha, \mu, p}(\Omega)$. By \eqref{estim for f ex}, \eqref{functional MP equiv or}, and Corollary \ref{embed full}, we have for any $\varepsilon>0$,
\begin{align}\label{mp geom tech 1}
\mathcal{J}(u) &=\frac{1}{p}\|u\|_{\alpha, \mu, p}^{p}-\frac{\lambda}{p}\|u\|_{L^{p}(\Omega)}^{p}-\int_{\Omega}F(x,u)dx \nonumber\\
&\geq \frac{1}{p}\|u\|_{\alpha, \mu, p}^{p}-\frac{\lambda}{p}\|u\|_{L^{p}(\Omega)}^{p}-\frac{\varepsilon}{p}\|u\|_{L^{p}(\Omega)}^{p}-\delta(\varepsilon)\|u\|_{L^{q}(\Omega)}^{q} \nonumber\\
& \geq \left(\frac{\lambda^*-\varepsilon}{p} \right)\|u\|_{\alpha, \mu, p}^{p}-\delta(\varepsilon)\|u\|_{L^q(\Omega)}^q  \\
& \geq \|u\|_{\alpha, \mu, p}^{p}\left(\frac{\lambda^*-\varepsilon}{p}-\delta(\varepsilon) c\|u\|_{\alpha, \mu, p}^{q-p}\right),\nonumber
\end{align}
where $\lambda^*:=\min\{1,1-\lambda\}>0$ and $c=c\left(N, \Omega, s_{\sharp}, p\right)>0$ denotes the embedding constant.

To establish \eqref{mp geom tech 1}, we used the following inequalities
     \begin{align*}
   \frac{1}{p}\|u\|_{\alpha, \mu, p}^{p}-\frac{\lambda}{p}\|u\|_{L^{p}(\Omega)}^{p} -\frac{\varepsilon}{p}\|u\|_{L^{p}(\Omega)}^{p}&-\delta(\varepsilon)\|u\|_{L^{q}(\Omega)}^{q} \geq \left(\frac{\lambda^*-\varepsilon}{p} \right)\|u\|_{\alpha, \mu, p}^{p}-\delta(\varepsilon)\|u\|_{L^q(\Omega)}^q\\
   &=\left\{\begin{array}{l}
 \left(\frac{1-\varepsilon}{p} \right)\|u\|_{\alpha, \mu, p}^{p}-\delta(\varepsilon)\|u\|_{L^q(\Omega)}^q \text { if } \lambda \leq 0,  \\
\left(\frac{1}{p}-\frac{\lambda}{p}-\frac{\varepsilon}{p} \right)\|u\|_{\alpha, \mu, p}^{p}-\delta(\varepsilon)\|u\|_{L^q(\Omega)}^q \text{ if } 0<\lambda<1. 
\end{array}\right.
\end{align*}
Since $\lambda^*>0$ and $q>p$, we can take
\begin{equation*}
    \varepsilon \in\left(0, \lambda^*\right) \quad \text{ and } \quad \rho \in\left(0,\left(\frac{\lambda^*- \varepsilon}{p \delta(\varepsilon) c}\right)^{\frac{1}{q-p}}\right)
\end{equation*}
to conclude that
\begin{equation*}
    \frac{\lambda^*-\varepsilon}{p}-\delta(\varepsilon) c \rho^{q-p}>0.
\end{equation*}
Consequently, taking $\|u\|_{\alpha, \mu, p}:=\rho$ in \eqref{mp geom tech 1}, we obtain
\begin{equation*}
    \inf _{\substack{u \in \mathcal{W}_{\alpha, \mu, p}(\Omega) \\\|u\|_{\alpha, \mu, p}=\rho}} \mathcal{J}(u) \geq \rho^{p}\left(\frac{\lambda^*-\varepsilon}{p}-\delta(\varepsilon) c \rho^{q-p}\right)=: \beta>0.
\end{equation*}

$(ii)$ Now we fix $u_{0} \in \mathcal{W}_{\alpha, \mu, p}(\Omega)$ such that $\left\|u_{0}\right\|_{\alpha, \mu, p}=1$ and $u_{0} \geq 0$ a.e. in $\mathbb{R}^{N}$. 
Then, for $t>0$, using \eqref{functional MP equiv or} and \ref{Word:f4}, we infer that
\begin{align}\label{mp geom tech 2}
\begin{split}
    \mathcal{J}\left(t u_{0}\right) & =\frac{t^{p}}{p}\left\|u_{0}\right\|_{\alpha, \mu, p}^{p}-\frac{\lambda t^{p}}{p}\left\|u_{0}\right\|_{L^{p}(\Omega)}^{p}-\int_{\Omega} F\left(x, t u_{0}\right) d x \\
& \leq \frac{t^{p}}{p}\lambda_*\left\|u_{0}\right\|_{\alpha, \mu, p}^{p}-a_{3} t^{\widetilde{\gamma}}\left\|u_{0}\right\|_{L^{\widetilde{\gamma}}(\Omega)}^{\widetilde{\gamma}}+\int_{\Omega} a_{4}(x) d x,
\end{split}
\end{align}
where $\lambda_*:=\max \{1,1-\lambda\}>0$. 

In \eqref{mp geom tech 2} we used the fact that
      \begin{align*}
    \left\|u_{0}\right\|_{\alpha, \mu, p}^{p}-\lambda\left\|u_{0}\right\|_{L^{p}(\Omega)}^{p}\leq \lambda_*\left\|u_{0}\right\|_{\alpha, \mu, p}^{p}=\left\{\begin{array}{l}
 \|u\|_{\alpha, \mu, p}^{p} \text { if } 0 \leq \lambda < 1,  \\
(1-\lambda)\|u\|_{\alpha, \mu, p}^{p} \text{ if } \lambda<0. 
\end{array}\right.
\end{align*}
Since $\widetilde{\gamma}>p$, passing to the limit in \eqref{mp geom tech 2}, we get
\begin{equation*}
    \lim _{t \rightarrow+\infty} \mathcal{J}\left(t u_{0}\right)=-\infty.
\end{equation*}
Therefore, taking $e:=t u_0$ for sufficiently large $t$, we obtain $\|e\|_{\alpha, \mu, p}>\rho$ and $\mathcal{J}(e)<\beta$, concluding the proof.
\end{proof} 

\begin{proof}[Proof of Theorem \ref{main result 03}]
For $\lambda<\lambda_1$, Proposition \ref{MP PS prop} guarantees that the Palais-Smale condition holds for every level $c \in \mathbb{R}$. Furthermore, Proposition \ref{MP geometry prop} establishes that the functional $\mathcal{J}$ satisfies the geometric assumptions of the mountain pass theorem. Consequently, applying Theorem \ref{MP thm}, we conclude that there exists a critical point $u \in \mathcal{W}_{\alpha, \mu, p}(\Omega)$ of $\mathcal{J}$ such that
\begin{equation*}
    \mathcal{J}(u) \geq \beta>0=\mathcal{J}(0).
\end{equation*}
In particular, we have $u \not \equiv 0$. This concludes the existence of a nontrivial weak solution to problem \eqref{main problem 1}.
\end{proof}

\section*{Conflict of interest statement}
On behalf of all authors, the corresponding author states that there is no conflict of interest.

\section*{Data availability statement}
Data sharing is not applicable to this article as no datasets were generated or analysed during the current study.

\section*{Acknowledgement}
 This work was supported by the FWO Odysseus 1 grant G.0H94.18N: Analysis and Partial Differential Equations and the Methusalem program of the Ghent University Special Research Fund (BOF) (Grant number 01M01021). The author is also supported by the Bolashak Government Scholarship of the Republic of Kazakhstan.

\end{document}